\numberwithin{equation}{section}
\newcommand{\labelsymbol}[2]{%
	\phantomsection
	\edef\@currentlabel{#2}
	\label{#1}
	#2
}
\newcommand\bigDiamond{\mathop{\mathpalette\bigDi@mond\relax}}
\newcommand\bigDi@mond[2]{%
	\vcenter{\hbox{\m@th
			\scalebox{\ifx#1\displaystyle 2\else1.2\fi}{$#1\Diamond$}%
	}}%
}
\newcommand\bigLozenge{\mathop{\mathpalette\bigL@zenge\relax}}
\newcommand\bigL@zenge[2]{%
	\vcenter{\hbox{\m@th
			\scalebox{\ifx#1\displaystyle 2\else1.2\fi}{$#1\blacklozenge$}%
	}}%
}
\newcommand{\leftsymbollabel}[2]{%
	\def\@currentlabel{$#2$}
	\makebox[0pt][r]{\ensuremath{#2}\, }
	\label{#1}%
}
\theoremstyle{plain}
\newtheorem{theorem}{Theorem}[section]
\newtheorem{corollary}[theorem]{Corollary}
\newtheorem{proposition}[theorem]{Proposition}
\newtheorem*{claim}{Claim}
\newtheorem{lemma}[theorem]{Lemma}
\newtheorem{example}{Example}[section]
\theoremstyle{remark}
\newtheorem{remark}[theorem]{Remark}
\theoremstyle{definition}
\newtheorem{definition}[theorem]{Definition}
\newtheorem*{question*}{Question}
\newcommand{\ok}{\overline{\mathcal{K}}}
\newcommand{\kk}{\mathcal{K}}
\newcommand{\uk}{\underline{\mathcal{K}}}
\newcommand{\om}{\overline{M}}
\newcommand{\um}{\underline{M}}
\newcommand{\R}{\mathbb{R}}
\newcommand{\N}{\mathbb{N}}
\newcommand{\iii}{\mathbf{i}}
\newcommand{\jjj}{\mathbf{j}}
\newcommand{\bu}{\mathbf{u}}
\newcommand{\bv}{\mathbf{v}}
\newcommand{\ccc}{\mathbf{c}}
\newcommand{\rrr}{\mathtt{r}}
\newcommand{\eps}{\varepsilon}
\newcommand{\de}{\delta}
\newcommand{\f}{\frac}
\newcommand{\s}{\sigma}
\newcommand{\oc}{\overline{c}}
\newcommand{\uc}{\underline{c}}
\newcommand{\lam}{\lambda}
\renewcommand{\ge}{\geqslant}
\renewcommand{\le}{\leqslant}
\renewcommand{\geq}{\geqslant}
\renewcommand{\leq}{\leqslant}
\DeclareMathOperator{\dimh}{dim_H}
\DeclareMathOperator{\odimb}{\overline{dim}_B}
\DeclareMathOperator{\dima}{dim_A}
\DeclareMathOperator{\dimqa}{dim_{qA}}
\newcommand{\blue}[1]{{\color{blue}#1}}
\title{Assouad   and quasi-Assouad dimensions of Moran sets}
\author{Jun-Jie Miao}
\address{School of Mathematical Sciences,  Key Laboratory of MEA(Ministry of Education) \& Shanghai Key Laboratory of PMMP,  East China Normal University, Shanghai 200241, China}
\email{jjmiao@math.ecnu.edu.cn}
\author{Ming-Hui Xu}
\address{School of Mathematical Sciences, East China Normal University, No. 500, Dongchuan Road, Shanghai 200241, P. R. China}
\email{52290155010@stu.ecnu.edu.cn}
\begin{document}
	\maketitle

	\begin{abstract}
		Moran sets are a non-autonomous generalization of self-similar sets.  In this paper, we study the quasi-Assouad and Assouad dimensions of Moran sets in $\mathbb{R}^{d}$.
		First we provide quasi-Assouad dimension formulae for Moran sets satisfying $c_*>0$. Then, we provide the upper and lower bounds for 
		quasi-Assouad dimension formulae for Moran sets without assuming  $c_*>0$. To obtain the exact dimension formulae in this case, we define quasi-normal
		and normal Moran sets, and provide  quasi-Assouad dimension formulae for these sets.
	\end{abstract}

	\section{Introduction}
	\subsection{Assouad and quasi-Assouad dimensions}
	Let $(X,d)$ be a metric space. For $A\subset X$ and $r>0$, denote by $N_r(A)$ the minimal number of balls of radius $r$ required to cover $A$. The \emph{Assouad dimension} of a non-empty subset $F$ of $X$ is defined by
	\begin{eqnarray*}
		\quad \dima F=\inf\Big\{\alpha:\text{there exist \ } C>0\text{\ and\ }0<\rho<1 \text{\ such that for all\ } x\in F \textit{ and }\\
		0<r<R<\rho,\vspace{0.5em}\
		N_r(B(x,R)\cap F)\le C\Big(\frac Rr\Big)^{\alpha}
		\Big\}.
	\end{eqnarray*}
We refer readers to \cite{falconer2013fractal,fraser2020assouad} for background.	The Assouad dimension was originally introduced in the study of bi-Lipschitz embeddings of metric spaces \cite{assouad1977espaces}. It has since become a fundamental notion in metric geometry and fractal analysis, capturing the most extreme local scaling behavior of sets through the maximal growth rate of covering numbers at small scales.
	This property makes the Assouad dimension an important tool in areas such as embedding theory, geometric measure theory, and the analysis of dynamics on fractals, see \cite{fraser2020assouad, heinonen2001lectures, 
		robinson2010dimensions}. 

Recently, Wang and Zahl \cite{WS25} proved that in $\mathbb{R}^{3}$, every Kakeya set has Assouad dimension $3$ and every Ahlfors-David regular Kakeya
set has Hausdorff dimension $3$. In \cite{Fraser14}, Fraser systematically investigated the basic properties of Assouad-type dimensions and computed the Assouad dimensions for various classes of self-affine and quasi-self-similar sets. In \cite{BF}, Banaji and Fraser studied  the Assouad type dimensions of limit sets of iterated function systems consisting of a countably infinite number of conformal contractions, and they obtained the Assouad dimension formula for such fractals under some separation conditions.  We refer readers to \cite{BKR,fraser2018assouad,FK23} for various studies on this topic.

	However, the Assouad dimension is highly sensitive to local irregularities, which often renders it less suitable for capturing the typical geometric structure of many fractal sets. To overcome this limitation, Lü and Xi \cite{lu2016quasi} introduced the quasi-Assouad dimension, a more refined variant of the Assouad dimension. For $\eta\in (0,1)$, define
	\begin{eqnarray*}
		h_{F}(\eta)\!=\inf\Big\{\alpha:\text{there exist \ } C>0\text{\ and\ }0<\rho<1 \text{\ such that}
		\text{ for all\ } x\in F\text{\ and\ } \nonumber \\
		\quad 0<r<R^{1+\eta}<R<\rho,\  
		N_r(B(x,R)\cap F)\le C\Big(\frac Rr\Big)^{\alpha}
		\Big\},                      \label{def_hED}
	\end{eqnarray*} 
	and  the \emph{quasi-Assouad dimension}  of $F$ is defined as 
	\[
	\dimqa F=\lim_{\eta\to0}h_F(\eta).
	\]
	It is worth noting that $h_F(\eta)$ corresponds to the upper Assouad spectrum ${\overline{\dim}}^{\theta}_{\mathrm A}F$ in \cite{fraser2018assouad}, where $\theta=\frac1{\eta+1}$.
	The quasi-Assouad dimension provides a finer description of the geometric structure at intermediate scales while preserving key metric properties. For a compact subset $E$, the following chain of inequalities holds:
	\begin{equation} \label{dimHBAQ}
		\dimh E\leq \odimb E\le \dimqa E\le \dima E,
	\end{equation}
	where $\dimh$ and  $\odimb $ denote the Hausdorff and upper box dimensions, respectively. We refer readers to \cite{fraser2020assouad} for further reading. In \cite{fraser2015assouad}, Fraser proved that all dimensions in \eqref{dimHBAQ} are equal for self-similar sets satisfying the weak separation condition. The inequalities in \eqref{dimHBAQ} can strictly hold; see Example \ref{exa'}.

However, the study of the quasi-Assouad dimension in this setting remains relatively underdeveloped. Although some results for quasi-Assouad dimension have been established in \cite{lu2016quasi,garcia2021}, explicit formulas and sharp bounds for Moran sets—especially in the general (possibly non-homogeneous) case—are still largely unexplored.

	\subsection{Fractal dimensions of Moran sets }\label{cond}			
As a generalization of self-similar and Cantor-like constructions, Moran structures are distinguished by their ability to accommodate significant inhomogeneity. This inherent flexibility provides a rich framework, establishing them as a fundamental and widely applicable class of fractals. In \cite{Shmerkin}, Shmerkin  studied Bernoulli convolutions defined on a class of homogeneous Moran sets and obtained their $L^q$ dimensions under certain conditions. As an application, he settled Furstenberg's long-standing conjecture on the dimension of intersections of $\times p$ - and $\times q$-invariant sets.  We refer readers to \cite{Multi,hua2000structures,local,MW26,GM1,GM2} for further studies on Moran structures.

	Let $\{n_{k}\}_{k\geq 1}$ be a sequence of integers with $n_k\geq 2.$ For $k_1\leq k_2\in \N^+$, we write
	\begin{gather*}
		D_{k_1,k_2}=\{\bu=u_{k_1}u_{k_1+1}\cdots u_{k_2}:1\leq u_{j}\leq
		n_{j}\ , k_1\le j\le k_2\}.
	\end{gather*}
	For simplicity, we write $	D_{k}= D_{1,k}$   with $
	D_{0}=\{\emptyset \}$ containing only the empty word $\emptyset$, and write
	$$
	D^{*}=\bigcup _{k=0}^{\infty }D_{k}
	$$
	for the set of all finite words. For $\bu \in D_{k_1,k_2}$, we denote by $\bu^-$ the word obtained from $\bu$ by removing its last symbol. For $\iii=i_1\ldots i_k \in D_k$ and $\jjj=j_1\ldots j_l\in D_{k+1,k+l}$, we denote by $\iii*\jjj=i_1\ldots i_kj_1\ldots j_l\in D_{k+l}$ the concatenation of $\iii$ and $\jjj$.
 	
	Let $
	J\subset \mathbb{R}^{d}$ be a compact set with $\mbox{int}(J)\neq
	\varnothing $, where int($\cdot )$ denotes the interior of a set. Let $
	\{\ccc_{k}\}_{k\geq 1}$ be a sequence of positive real vectors where $\ccc
	_{k}=(c_{k,1},c_{k,2},\cdots ,c_{k,n_{k}})$ and $\sum
	_{j=1}^{n_{k}}c_{k,j}^{\,d}\leq 1$ for each $k\in \mathbb{N}$. We say the
	collection $\mathcal{F}=\{J_{\bu}:\bu\in D^*\}$ of closed
	subsets of $J$ fulfills the \textit{Moran structure} if it satisfies the
	following Moran structure conditions (MSC):
	\begin{enumerate}
		
		\item For each $\bu\in D^*$, $J_{\bu}$ is geometrically similar to
		$J$, i.e., there exists a similarity $\Psi_{\bu}:\mathbb{R}%
		^{d}\rightarrow \mathbb{R}^{d}$ such that $J_{\bu}=\Psi_{\bu}(J)$%
		. We write $J_{\emptyset }=J$ for empty word $\emptyset $.
		\item For all $k\in \mathbb{N}$ and $\bu\in D_{k-1}$, the elements $J_{%
			\bu*1}, J_{\bu*2},\cdots ,J_{\bu*n_{k}}$ of $\mathcal{F}$
		are the subsets of $J_{\bu}$ with   $\mbox{int}%
		(J_{\bu*i})\cap \mbox{int}(J_{\bu*i^{\prime }})=\varnothing $
		for $i\neq i^{\prime }$ and 
		\begin{equation*}
			\frac{|J_{\bu*i}|}{|J_{\bu}|}=c_{k,i},
		\end{equation*}%
		for all $1\leq i\leq n_{k} $,		where $|\cdot |$ denotes the diameter.
		
	\end{enumerate}
	The non-empty compact set
	\begin{equation}\label{attractor}
		E=E(\mathcal{F})=\bigcap_{k=1}^{\infty }\,\bigcup_{%
			\bu\in D_{k}}J_{\bu}
	\end{equation}
	is called a \textit{Moran set} determined by $\mathcal{F}$ or $(J,\{n_k\},\{\ccc_k\})$. We write $\mathcal{M}(J,\{n_k\},\{\ccc_k\})$ for the collection of all sets determined by $\mathcal{F}$. For all $\bu\in D_{k}$, the elements $J_{\bu}$
	are called \textit{\ $k$th-level basic sets} of $E$.   We say that $\mathcal{M}(J,\{n_k\},\{\ccc_k\})$ or $E$ is \emph{homogeneous} if for each $k\in \N$, there is a number $c_k$ such that $c_{k,j}=c_k, 1\le j\le n_k$. We refer readers to \cite{Wen2000} for the background reading.
	
	In the article, we always assume  $J=[0,1]^d$. For $p,q\in \N^+$ with $1\le p< q$ and $\bu=u_p\ldots u_{q}\in D_{p,q}$,  we denote $c_\bu=c_{p,u_p}c_{p+1,u_{p+1}}\cdots c_{q,u_{q}}$. We also write
	\[ \begin{split}
		\om_{p,q}=\max_{\bu\in D_{p,q}}c_{\bu},\quad & \quad \um_{p,q}=\min_{\bu\in D_{p,q}}c_{\bu}   \\ \oc_k=\max_{1\le j\le n_k}c_{k,j},\quad &\quad \uc_k=\min_{1\le j\le n_k}c_{k,j}, 
	\end{split}	\]
	and $
	\om_k =\om_{1,k}=\max\limits_{\bu \in D_k}c_{\bu}$, $  \um_k = \um_{1,k}=\min\limits_{\bu \in D_k}c_{\bu} $ for simplicity.	
	For $k\in \N^+,l\in \N $, let $s_{k,k+l}$  be the unique real solution of the equation
	\begin{equation} \label{def_skl}
		\prod_{i=k}^{k+l}\Big(\sum_{j=1}^{n_i}c_{i,j}^s \Big)=1.
	\end{equation}
	and write $s_k = s_{k,k}$ for simplicity.  

The fractal dimensions of  Moran sets are usually determined by the double index sequence $\{s_{k,k+l}\}$.   If  $\mathcal{M}(J,\{n_k\},\{\ccc_k\})$ satisfies  $c_*=\inf_{k,j}c_{k,j}>0$,  then, for any  $E\in\mathcal{M}(J,\{n_k\},\{\ccc_k\})$,
	\begin{equation}\label{Eq_HBAdim}
		\dimh E=\liminf_{l\to\infty} s_{1,l},\quad \odimb E=\limsup_{l\to\infty} s_{1,l}, \quad  \dima E=\limsup_{l\to\infty}\sup_{k\ge 1} s_{k+1,k+l} .
	\end{equation}
	See  \cite{li2016assouad} for details.  
	In \cite{hua2000structures}, Hua, Rao, wen and Wu proved that   the Hausdorff  and upper box dimension formulae in \eqref{Eq_HBAdim} still hold if $c_*>0$ is relaxed to the following condition:
	\begin{equation}\label{club}
		\lim_{k\to\infty}\frac{\log \uc_k}{\log \om_k}=0.
	\end{equation}
Conditions $c_*>0$ and \eqref{club} are most frequently used in the study of Moran-type sets. 

	Given $\iii\in D^*$ and $\de>0$, we define
	\[
	D_{\iii}(\de)=\{\iii*\jjj\in D^*: c_{\jjj}\le \de < c_{\jjj^-}\}.
	\]
	In particular, when $\iii=\emptyset$, we write $D(\de)$ instead of $D_{\emptyset}(\de)$; that is,
	\[
	D(\de)=\{\jjj\in D^*: c_{\jjj}\le \de < c_{\jjj^-}\}.
	\]
	Recently,  K{\"a}enm{\"a}ki and Rutar \cite{kaenmaki2024regularity}  showed that the Assouad dimension in \eqref{Eq_HBAdim}  remains valid if $c_*>0$ is replaced by the bounded neighbourhood condition which  bears a strong relation to the finite clustering property studied in \cite{kaenmaki2015weak,MR2415480,rajala2011weakly}. 
	\begin{definition} 
		Given  $E\in \mathcal{M}(J,\{n_k\},\{\ccc_k\})$, we say that $E$ satisfies the \textit{bounded neighbourhood condition} (BNC) if
		\[
		\limsup_{\de\to 0}\sup_{x\in E}\# \left\{\bu\in D(\de):E\cap J_{\bu}\cap B(x,\de)\neq \emptyset \right\}<\infty.
		\]
	\end{definition}
In Lemma \ref{numberofcap}, we show that \eqref{club} implies a weak version of BNC.  
In  \cite{kaenmaki2024regularity}, 	K{\"a}enm{\"a}ki and Rutar also introduced the following.
	\begin{definition}
		Given  $\mathcal{M}(J,\{n_k\},\{\ccc_k\})$, we say that $\mathcal{M}(J,\{n_k\},\{\ccc_k\})$ satisfied the  {\em bounded branching condition} (BBC) if
		\begin{equation*}
			\limsup_{\de\to 0}\sup_{\bu\in D(\de)}\# \left\{\bu'\in D(\de):\bu'=\bu^-*\iii\text {\ for some }\iii\, \right\}<\infty.
		\end{equation*}
	\end{definition}

We prove that BBC is sufficient for the BNC (Proposition \ref{BC->BNC}), which partially extends \cite[Theorem B]{kaenmaki2024regularity}. An equivalent characterization of BBC is provided in Proposition \ref{BC}. Observe that the implication $c_*>0 \Rightarrow (\text{\eqref{club}} \text{ and } \text{BBC})$ is straightforward (Corollary  \ref{c_*->BC}).

 Note that conditions \eqref{club} and  BBC  are mutually independent. They control different aspects of the Moran structure. Specifically, \eqref{club} bounds the relative scaling between the ``thinnest" and ``fattest" basic sets across levels, and the BBC \cite{kaenmaki2024regularity} restricts the number of ``geometric siblings" in $D(\delta)$.  The essential difference between the BNC and the two aforementioned conditions lies in that the BNC  depends strongly on the locations of the sets in $\mathcal{F}=\{J_{\bu}:\bu\in D^*\}$, whereas  \eqref{club} and  BBC do not.
Examples illustrating these cases are presented below, with proofs given in Section \ref{Sec_ADE}.
	
	\begin{example}\label{1-2-3+}
		 $(1)$  The Moran structure $ \mathcal{M}\big(J,\{n_k\equiv 2\},\big\{(2^{-2^k},2^{-2^k})\big\}_{k\ge 1}\big)$ satisfies BBC but fails to satisfy \eqref{club}.\vspace{0.2em}
		
\noindent		(2)  The Moran structure $\mathcal{M}\big(J,\{k+1\}_{k\ge 1},\big\{\big(\f1{2k},\ldots,\f1{2k}\big)\big\}_{k\ge 1}\big)$ satisfies \eqref{club} but fails to satisfy BBC.
	\end{example}	
	The following example was suggested by Alex Rutar in a private conversation,  illustrating  that the BNC does not necessarily imply BBC.
	\begin{example}\label{BCandBNC}
		Let $E\in \mathcal{M}([0,1],\{n_k\equiv 2\},\{\ccc_{k}\})$, where
		\[
		c_{k,1}=\left\{\begin{array}{cl}
			\f 1{1+2^n},\, & k=\f{n(n+1)}2 \vspace{0.3em}\\
			\f12, & \text{otherwise}\\
		\end{array},
		\right.\ c_{k,2}=\left\{\begin{array}{cl}
			\f {2^n}{1+2^n},\, & k=\f{n(n+1)}2 \vspace{0.3em}\\
			\f12, & \text{otherwise}\\
		\end{array},
		\right.\ n\in\N^+.
		\]
Assume that for each $\iii\in D^*$, the left and right endpoints of 
		$J_\iii$ coincide  respectively  with the left endpoint of
		$J_{\iii*1}$ and the right endpoint of $J_{\iii*2}$.	Then $E=[0,1]$ satisfies the BNC, but BBC does not hold.
	\end{example}

	In \cite{PENG2017192,yang2020assouad}, the authors  provide a different Assouad dimension   formula for homogeneous Moran  sets $E\in \mathcal{J}(I_0,\{n_k\},\{c_k\},\{\eta_{k,j}\})$ with $\sup_k n_k<\infty$, 
	\[
	\dima E=\lim_{m\to\infty}\sup_{k\ge m,l\ge m} s_{k+1,k+l}.
	\]
	Consequently, it is natural to ask under what conditions the two formulae are equivalent, that is, 
	\[
	\lim_{m\to\infty}\sup_{k\ge m,l\ge m} s_{k+1,k+l}=\limsup_{l\to\infty}\sup_{k\ge 1} s_{k+1,k+l}.
	\]
A sufficient condition for this equality is derived as a direct consequence of our principal conclusions; see Propostion \ref{general} and  Section \ref{Sec_ADE} for details.

	In the paper, we study the Assouad and quasi-Assouad dimensions of Moran sets in $\mathbb{R}^{d}$ and establish our main findings in the subsequent section.

	\section{Notation and main results}
	Given   $\mathcal{M}(J,\{n_k\},\{\ccc_k\})$, we  write
	\begin{equation}\label{def_t*tt*}
		\begin{split}
			&t^*=\lim_{\eta\to 0}\limsup_{l\to\infty} \sup_{k\in \ok_{l+1,\eta}} s_{k+1,k+l}, \qquad
			t=\lim_{\eta\to 0}\limsup_{l\to\infty}\sup_{k\in \kk_{l,\eta}} s_{k+1,k+l}, \\
			&t_*=\lim_{\eta\to 0}\limsup_{l\to\infty}\sup_{k\in \uk_{l,\eta}} s_{k+1,k+l}, 
		\end{split}
	\end{equation}
	where
	\begin{eqnarray*}
		&&\ok_{l,\eta}=\Big\{k\ge 2:\frac{\log \um_{k+1,k+l}}{\log \om_{k-1}}>\eta\Big\}, \qquad \kk_{l,\eta}=\Big\{k:\frac{\log \um_{k+1,k+l}}{\log \om_{k}}>\eta\Big\},\\
		&&\uk_{l,\eta}=\Big\{k:\frac{\log \om_{k+1,k+l}}{\log \om_{k}}>\eta\Big\}. 
	\end{eqnarray*}
Then, for $0<\eta<\f{\log \oc_2}{\log \oc_1}$, $1\in\uk_{l,\eta}\subseteq\kk_{l,\eta}\subseteq\ok_{l,\eta}\subseteq\ok_{l+1,\eta}$ for all $l\geq 1$ and thus
	\begin{align*}
		\limsup_{l\to\infty}s_{1,l}\le t_*\le  t\le t^*\le \limsup_{l\to\infty}\sup_{k\ge1} s_{k+1,k+l}.
	\end{align*}
Note that for each $\eta>0$, it is clear that  $	\bigcup_{l\ge 1}\,\kk_{l,\eta}=\bigcup_{l\ge 1}\,\ok_{l,\eta}=\N^+,$ and 
		\[
		\ok_{l,\eta}\subseteq\ok_{l+1,\eta},\qquad\kk_{l,\eta}\subseteq\kk_{l+1,\eta},\qquad \uk_{l,\eta}\subseteq\uk_{l+1,\eta} . 
		\]
		Moreover, if $\lim_{k\to\infty} \om_k=0$, then 
		$ \bigcup_{l\ge 1}\,\uk_{l,\eta}=\N^+. $

	For $c_*>0$, we obtain  the quasi-Assouad dimension of Moran sets.
	\begin{theorem}\label{no.1}
If $\mathcal{M}(J,\{n_k\},\{\ccc_k\})$  has $c_*>0$, then every $E\in\mathcal{M}(J,\{n_k\},\{\ccc_k\})$ satisfies
		\[
		\dimqa E=t_*=  t= t^*.
		\]
	\end{theorem}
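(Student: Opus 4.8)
The plan is to exploit the chain $\limsup_{l\to\infty}s_{1,l}\le t_*\le t\le t^*$ that is already available, which reduces the theorem to two statements: that the quasi-Assouad dimension is trapped between the extremes, $t_*\le\dimqa E\le t^*$, and that $c_*>0$ collapses the extremes, $t^*\le t_*$. Granting both, the displayed chain forces $\dimqa E=t_*=t=t^*$. The workhorse for the trapping is the Moran covering estimate underlying \eqref{Eq_HBAdim}: since $c_*>0$ makes consecutive basic sets comparable and, through the bounded neighbourhood property it supplies (Corollary \ref{c_*->BC} and Proposition \ref{BC->BNC}), lets only boundedly many $R$-scale basic sets meet a ball $B(x,R)$, one obtains for $x\in E$ and $0<r<R<\rho$ a two-sided bound $N_r(B(x,R)\cap E)\asymp(R/r)^{s_{k+1,k+l}}$, where $\iii\in D(R)$ has $c_\iii\asymp R$ at level $k$ and $l$ is the depth at which descendants of $J_\iii$ reach scale $r$; the exponent is pinned to $s_{k+1,k+l}$ through the pressure equation \eqref{def_skl}.

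For the upper bound $\dimqa E\le t^*$, I would fix $\alpha>t^*$; as $\limsup_{l\to\infty}\sup_{k\in\ok_{l+1,\eta}}s_{k+1,k+l}$ increases to $t^*$ as $\eta\downarrow0$, it stays below $\alpha$ for every $\eta$. Given a valid triple with $r<R^{1+\eta}<R$, rewrite the constraint as $|\log(r/R)|>\eta|\log R|$; the parent word at level $k-1$ satisfies $c_{\iii^-}>R$, so $|\log R|>|\log\om_{k-1}|$, while the descendants realizing scale $r$ give $|\log(r/R)|\le|\log\um_{k+1,k+l+1}|$ up to a bounded slack from $c_*>0$. Chaining these puts $k$ in $\ok_{l+1,\eta}$ (after absorbing a bounded factor into $\eta$, harmless as $\eta\to0$), so $s_{k+1,k+l}<\alpha$ and $N_r\lesssim(R/r)^\alpha$; letting $\alpha\downarrow t^*$ gives the bound. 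The lower bound $\dimqa E\ge t_*$ is dual: for $\alpha<t_*$ and each small $\eta$, pick, for infinitely many $l$, some $k\in\uk_{l,\eta}$ with $s_{k+1,k+l}>\alpha$, and then, exactly as in the lower half of \eqref{Eq_HBAdim}, choose $x\in E$, $R\asymp\om_k$ and $r$ at the level-$(k+l)$ cut so that $N_r(B(x,R)\cap E)\gtrsim(R/r)^{s_{k+1,k+l}}$, the membership $k\in\uk_{l,\eta}$ being precisely what secures $r<R^{1+\eta}$. The principal obstacle is concentrated here: making the covering estimate two-sided and uniform in $k,l$ in the inhomogeneous regime, and matching the geometric constraint $r<R^{1+\eta}$ to the correct index-set threshold—the max-versus-min diameter and the level-$k$-versus-$(k-1)$ asymmetries that separate $\ok_{l,\eta}$, $\kk_{l,\eta}$ and $\uk_{l,\eta}$.

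It remains to prove $t^*\le t_*$, and this is where $c_*>0$ is decisive. Together with $n_k\ge2$ and $\sum_j c_{k,j}^d\le1$, it forces $\oc_k\le(1-c_*^d)^{1/d}<1$, so that $|\log\oc_k|$ and $|\log\uc_k|$ both lie in a fixed interval $[\beta,\gamma]\subset(0,\infty)$. The product identities $\om_k=\prod_{i\le k}\oc_i$ and $\um_{k+1,k+l}=\prod_{i=k+1}^{k+l}\uc_i$ then yield $|\log\um_{p,q}|\le M|\log\om_{p,q}|$ with $M=\gamma/\beta$, while $|\log\om_k|\ge k\beta\to\infty$ gives $|\log\om_{k-1}|/|\log\om_k|\to1$. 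A short estimate now shows $\ok_{l+1,\eta}\cap\{k\ge k_1\}\subseteq\uk_{l,\eta'}$ with $\eta'$ a fixed fraction of $\eta$; since $\om_k\to0$ gives $\bigcup_l\uk_{l,\eta'}=\N^+$, the finitely many small indices are absorbed once $l$ is large, whence $\limsup_{l\to\infty}\sup_{k\in\ok_{l+1,\eta}}s_{k+1,k+l}\le\limsup_{l\to\infty}\sup_{k\in\uk_{l,\eta'}}s_{k+1,k+l}$. Letting $\eta\to0$ (so $\eta'\to0$) gives $t^*\le t_*$. Combining $t_*\le\dimqa E\le t^*$, this collapse, and the given $t_*\le t\le t^*$ completes the proof that $\dimqa E=t_*=t=t^*$.
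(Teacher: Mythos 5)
Your overall architecture coincides with the paper's. The paper proves Theorem \ref{no.1} by citing Corollary \ref{t_*=t^*} or Theorem \ref{main2}, and unwinding those references gives exactly your three steps: the general lower bound $\dimqa E\ge t_*$ (Theorem \ref{dayu}), the upper bound $\dimqa E\le t^*$ under BBC/BNC, which $c_*>0$ supplies (Theorem \ref{xiaoyu2} together with Corollary \ref{c_*->BC} and Proposition \ref{BC->BNC}), and the collapse $t^*\le t_*$ from $c_*>0$ (Proposition \ref{quasihomo}(i)(a) plus the index-set inclusion inside the proof of Theorem \ref{main2}). Your collapse step is correct and is essentially the paper's normality computation with $\varphi(x)=x$: from $c_*\le\uc_k\le\oc_k\le(1-c_*^d)^{1/d}$ you get $|\log\uc_k|,|\log\oc_k|\in[\beta,\gamma]$, hence $\ok_{l+1,\eta}\subseteq\uk_{l,c\eta}$ for a constant $c=c(\beta,\gamma)>0$, and $t^*\le t_*$ follows by letting $\eta\to0$. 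Your upper-bound sketch also tracks Theorem \ref{xiaoyu2} correctly (cut sets $D_\iii(r/R)$, pinning via Lemma \ref{compensate}, membership of $|\iii|$ in $\ok$, BNC to bound the number of $R$-scale cylinders meeting $B(x,R)$).

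The genuine gap is the lower bound, precisely where you place the ``principal obstacle'' --- flagging it does not discharge it, and the fix you propose would fail. You claim $N_r(B(x,R)\cap E)\gtrsim(R/r)^{s_{k+1,k+l}}$ by taking $r$ ``at the level-$(k+l)$ cut''. But even with $c_*>0$, within-level inhomogeneity makes the words of a single-scale cut of $J_\iii$ have depths spread over an interval of length comparable to $l$, and Lemma \ref{compensate} then only guarantees $\#D_\iii(\de)\,\de^s\gtrsim\prod_{i=k+1}^{k+p}\sum_{j}c_{i,j}^{s}$ for some \emph{uncontrolled} pinning depth $p$ in that interval. When $s$ is close to $s_{k+1,k+l}$, such a partial product can be exponentially small in $l$ (take levels $k+1,\dots,k+\lfloor l/2\rfloor$ with $\sum_j c_{i,j}^s=\theta<1$ and the remaining levels with $\sum_j c_{i,j}^s=\Theta>1$, $\theta\Theta>1$), so the single-scale count falls far short of $(R/r)^{s_{k+1,k+l}}$; your ``two-sided bound $N_r\asymp(R/r)^{s_{k+1,k+l}}$'' is false as stated. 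Note the asymmetry: the upper bound only needs the partial product at the one pinning level to be at most $1$, which membership in $\ok$ delivers, so the two bounds are not ``dual'' in the way your sketch assumes. The missing idea --- the entire content of the paper's Theorem \ref{dayu} --- is a pigeonhole over dyadic diameter classes: split $D_{k+1,k+l}$ into $\BB_{m,k}=\{\jjj:2^{-m-1}<c_\jjj\le 2^{-m}\}$, use $\sum_m\#\BB_{m,k}2^{-ms}\ge\prod_{i=k+1}^{k+l}\sum_jc_{i,j}^s>1$ to find a single scale $m'$ with $\#\BB_{m',k}\ge(1-2^{-\eps})2^{(s-\eps)m'}$, and cover only at that scale, where the selected cylinders have comparable diameters so Lemma \ref{numberofball} applies; the constraint $r<R^{1+\eta}$ then comes from $m'\ge m_k$ together with $k\in\uk_{l,\eta}$. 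Without this (or an equivalent) device the inequality $t_*\le\dimqa E$ does not close; with it, your argument becomes the paper's.
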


	To derive  dimension formulae  without  $c_*>0$,  we  first provide two special results.

	\begin{proposition}\label{general}
If $\mathcal{M}(J,\{n_k\},\{\ccc_k\})$ has  $\lim_{k\to\infty}\om_k=0$, then  for all $K\in \N^+$,
		\[
		\limsup_{l\to\infty}\sup_{k\ge K} s_{k,k+l}=\lim_{m\to \infty}\sup_{k\ge m,l\ge m}s_{k,k+l}.
		\]
	\end{proposition}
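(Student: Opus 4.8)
The plan is to rewrite both quantities as infima of double suprema and then treat the two inequalities separately. Since $\limsup_{l\to\infty}a_l=\inf_{L}\sup_{l\ge L}a_l$, the left-hand side equals $\inf_{L}\sup_{k\ge K,\,l\ge L}s_{k,k+l}$, while, the defining sequence being non-increasing in $m$, the right-hand side equals $\inf_{m}\sup_{k\ge m,\,l\ge m}s_{k,k+l}$. In this form the inequality $\lim_{m}\sup_{k\ge m,\,l\ge m}s_{k,k+l}\le\limsup_{l}\sup_{k\ge K}s_{k,k+l}$ is immediate: for every $m\ge K$ one has the inclusion of index sets $\{(k,l):k\ge m,\,l\ge m\}\subseteq\{(k,l):k\ge K,\,l\ge m\}$, so the corresponding suprema are ordered, and taking the infimum over $m\ge K$ (which changes neither limit) yields the bound. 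Throughout I would work with the auxiliary function $g_{p,q}(s)=\sum_{i=p}^{q}\log\bigl(\sum_{j=1}^{n_i}c_{i,j}^{\,s}\bigr)$, whose unique zero is $s_{p,q}$ by \eqref{def_skl}; note that $g_{p,q}=g_{p,r}+g_{r+1,q}$ for $p\le r<q$, and that $s_{p,q}\le d$ because $\sum_j c_{i,j}^d\le 1$ forces $g_{p,q}(d)\le0$.

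The substance is the reverse inequality, which I would prove by contradiction. Suppose the left-hand side exceeds the right-hand side, and choose constants $\beta<\gamma$ strictly between them. By the right-hand formula there is $m_0$ with $s_{k,k+l}<\beta$ whenever $k\ge m_0$ and $l\ge m_0$; by the left-hand formula, for every $L$ there is a pair $(k,l)$ with $k\ge K$, $l\ge L$ and $s_{k,k+l}>\gamma$. Taking $L\ge m_0$, the bound $s_{k,k+l}>\gamma>\beta$ rules out $k\ge m_0$ (since $l\ge L\ge m_0$), so necessarily $k\in\{K,\dots,m_0-1\}$. As this range is finite while $L\to\infty$, a pigeonhole argument produces a fixed index $k^{*}\in\{K,\dots,m_0-1\}$ and a sequence $l_n\to\infty$ with $s_{k^{*},\,k^{*}+l_n}>\gamma$.

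The heart of the proof is a quantitative stability estimate showing that deleting the bounded prefix $[k^{*},m_0-1]$ from the long window $[k^{*},k^{*}+l_n]$ barely moves the exponent. Writing $s_W=s_{k^{*},k^{*}+l_n}$ and using additivity at $s_W$, one gets $g_{m_0,k^{*}+l_n}(s_W)=-\,g_{k^{*},m_0-1}(s_W)$, whose absolute value is bounded by a constant $C_1:=\sum_{i=K}^{m_0-1}\sup_{s\in[0,d]}\bigl|\log\sum_{j}c_{i,j}^{\,s}\bigr|<\infty$ depending only on $K$ and $m_0$. The mean value theorem then gives
\[
|s_W-s_{m_0,k^{*}+l_n}|=\frac{|g_{m_0,k^{*}+l_n}(s_W)|}{|g_{m_0,k^{*}+l_n}'(\xi)|}\le\frac{C_1}{\sum_{i=m_0}^{k^{*}+l_n}(-\log\oc_i)}=\frac{C_1}{-\log\om_{m_0,k^{*}+l_n}},
\]
where the derivative bound uses that each $\tfrac{d}{ds}\log\bigl(\sum_j c_{i,j}^{\,s}\bigr)$ is a weighted average of the $\log c_{i,j}$, hence $\le\log\oc_i<0$, and that $\om_{m_0,k^{*}+l_n}=\prod_{i=m_0}^{k^{*}+l_n}\oc_i$. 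Because $\om_k=\prod_{i=1}^{k}\oc_i$, the hypothesis $\lim_{k}\om_k=0$ is equivalent to $\sum_i(-\log\oc_i)=\infty$, so the denominator tends to $\infty$ and the right-hand side tends to $0$ as $n\to\infty$. Since $s_{m_0,k^{*}+l_n}<\beta$ for large $n$, this forces $s_W<\gamma$ eventually, contradicting $s_W>\gamma$ and completing the proof.

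The main obstacle I anticipate is precisely this stability estimate: the crude interlacing bound $s_W\le\max\{s_{k^{*},m_0-1},\,s_{m_0,k^{*}+l_n}\}$ coming from additivity is too weak, since the fixed prefix exponent $s_{k^{*},m_0-1}$ can itself be large. One must instead quantify how fast the exponent of the long tail dominates, and this is exactly the step where $\lim_k\om_k=0$ is indispensable, guaranteeing that $-\log\om_{m_0,k^{*}+l_n}\to\infty$ and hence that the bounded prefix has asymptotically negligible influence.
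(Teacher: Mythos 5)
Your proof is correct, and it is essentially the paper's argument: the easy inequality is proved identically, and in both cases the substantive direction reduces to showing that deleting the bounded prefix $[k^*,m_0)$ from a long window $[k^*,k^*+l]$ perturbs the exponent $s_{k^*,k^*+l}$ by $o(1)$, which is precisely where $\lim_{k\to\infty}\om_k=0$ (equivalently $-\log\om_{m_0,k^*+l}\to\infty$) enters. The differences are only in packaging: you argue by contradiction and pigeonhole a single bad index $k^*$, and you quantify the stability via the mean value theorem, obtaining the explicit rate $|s_{k^*,k^*+l_n}-s_{m_0,k^*+l_n}|\le C_1/(-\log\om_{m_0,k^*+l_n})$, whereas the paper treats all $k'\in[K,m_0)$ simultaneously and gets the same stability softly, by trapping the tail product $\prod_{i=m_0}^{k'+l}\sum_{j}c_{i,j}^{s}$ between two constants at $s=s_{k',k'+l}$ while the factors $\om_{m_0,k'+l}^{\pm\eps}$ force that product to $0$ or $\infty$ at $s=s_{m_0,k'+l}\pm\eps$; your derivative bound $\frac{d}{ds}\log\sum_j c_{i,j}^s\le\log\oc_i$ is just the differential form of the paper's multiplicative estimate.
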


Proposition \ref{general} serves to unify the Assouad dimension formulas presented in \cite{PENG2017192,yang2020assouad} and \cite{li2016assouad}. The assumption $\lim_{k\to\infty} \om_k = 0$ in Proposition \ref{general} is crucial, as the proposition fails when the limit is positive; see Example \ref{exa} (i).	

\begin{theorem}\label{WhenBNCisneeded}
If $\mathcal{M}(J,\{n_k\},\{\ccc_k\})$ satisfies $\liminf_{k\to\infty}\f{\log \uc_k}{\log \om_k}>0$ and BBC, then every  $E\in\mathcal{M}(J,\{n_k\},\{\ccc_k\})$ satisfies
		\[
		\dimqa E=\dima E=\limsup_{l\to\infty}\sup_{k\ge 1} s_{k+1,k+l}.
		\]
	\end{theorem}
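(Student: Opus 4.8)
\emph{Strategy.} Write $\mathsf S=\limsup_{l\to\infty}\sup_{k\ge 1}s_{k+1,k+l}$ for the common right-hand side. By the general inequalities \eqref{dimHBAQ} one always has $\dimqa E\le\dima E$, so it suffices to prove the two estimates $\dima E=\mathsf S$ and $\dimqa E\ge\mathsf S$. The first is essentially free under the stated hypotheses: BBC implies the BNC by Proposition \ref{BC->BNC}, and under the BNC the Assouad formula of \eqref{Eq_HBAdim} is valid by \cite{kaenmaki2024regularity}, giving $\dima E=\mathsf S$. Thus the whole force of the theorem lies in the lower bound $\dimqa E\ge\mathsf S$, and this is precisely where the growth hypothesis $\liminf_{k\to\infty}\frac{\log\uc_k}{\log\om_k}>0$ is used.

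\emph{The role of the hypothesis.} The plan is to show that this hypothesis makes every sufficiently deep level \emph{quasi-Assouad admissible}, in the sense that its defining scales satisfy $r<R^{1+\eta}$ for all small $\eta$. Fix $c'$ with $0<c'<\liminf_k\frac{\log\uc_k}{\log\om_k}$ and $k_1$ so that $|\log\uc_k|\ge c'|\log\om_k|$ for $k\ge k_1$. For $k\ge k_1$ and any $l\ge1$, keeping only the first factor,
\[
|\log\um_{k+1,k+l}|=\sum_{i=k+1}^{k+l}|\log\uc_i|\ge|\log\uc_{k+1}|\ge c'|\log\om_{k+1}|\ge c'|\log\om_k|,
\]
since $\om_{k+1}\le\om_k$. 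Hence $\frac{\log\um_{k+1,k+l}}{\log\om_k}\ge c'$ and, a fortiori, $\frac{\log\um_{k+1,k+l}}{\log\om_{k-1}}\ge c'$, so that for every $\eta<c'$ we have $\{k\ge k_1\}\subseteq\kk_{l,\eta}\subseteq\ok_{l,\eta}$ for all $l$. Consequently $\sup_{k\in\kk_{l,\eta}}s_{k+1,k+l}\ge\sup_{k\ge k_1}s_{k+1,k+l}$, and applying Proposition \ref{general} (after the index shift $k\mapsto k+1$, which shows $\limsup_{l}\sup_{k\ge K}s_{k+1,k+l}$ is independent of $K$) yields $t=t^*=\mathsf S$ for all $\eta<c'$; equivalently, $\limsup_{l\to\infty}\sup_{k\ge k_1}s_{k+1,k+l}=\mathsf S$.

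\emph{Lower bound.} Fix $\eta\in(0,c')$ and $\eps>0$. By the last identity there are infinitely many $l$ and, for each, an index $k=k(l)\ge k_1$ with $s_{k+1,k+l}>\mathsf S-\eps$. Let $\bu$ be a fattest level-$k$ word, so $c_\bu=\om_k$; set $R\asymp|J_\bu|$ and $r\asymp R\,\um_{k+1,k+l}$. Because $k\in\kk_{l,\eta}$ we have $\um_{k+1,k+l}\le\om_k^{\eta}$, hence $r\le R^{1+\eta}$, so $(r,R)$ is an admissible pair in the definition of $h_E(\eta)$. Distributing mass $c_{\jjj}^{\,s_{k+1,k+l}}$ over the level-$(k+l)$ offspring of $\bu$ defines a probability measure on $J_\bu\cap E$ (the total mass is $1$ by \eqref{def_skl}); the BNC bounds the number of offspring clustering in any ball of radius $r$, which is exactly the estimate already used to prove $\dima E\ge\mathsf S$ in the BNC regime and gives $N_r(B(x,R)\cap E)\gtrsim(R/r)^{s_{k+1,k+l}}\ge(R/r)^{\mathsf S-\eps}$. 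Letting $\eps\to0$ gives $h_E(\eta)\ge\mathsf S$ for every $\eta<c'$, and then $\eta\to0$ gives $\dimqa E\ge\mathsf S$, which together with the first paragraph completes the proof.

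\emph{Main obstacle and a degenerate case.} The crux is the covering estimate of the third paragraph: converting the defining relation \eqref{def_skl} into the bound $N_r(B(x,R)\cap E)\gtrsim(R/r)^{s_{k+1,k+l}}$ at the single scale $r\asymp R\,\um_{k+1,k+l}$, where the offspring diameters spread across $[R\,\um_{k+1,k+l},R\,\om_{k+1,k+l}]$ and one must invoke the BNC (via BBC and Proposition \ref{BC->BNC}) to keep the $\mu$-mass of $r$-balls comparable to $(r/R)^{s_{k+1,k+l}}$. A secondary point is that Proposition \ref{general} requires $\lim_k\om_k=0$; if instead $\inf_k\om_k>0$, then $\sum_k|\log\oc_k|<\infty$, the nested fattest basic sets intersect in a nondegenerate cube, so $\dima E=\dimqa E=d$, while the same summability forces $s_{k+1,k+l}\to d$ and hence $\mathsf S=d$, so the theorem holds trivially in this regime.
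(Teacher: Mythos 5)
Your upper bound (that BBC gives the BNC via Proposition \ref{BC->BNC}, hence $\dima E=\limsup_{l\to\infty}\sup_{k\ge1}s_{k+1,k+l}=\mathsf{S}$ by \cite{kaenmaki2024regularity}, and $\dimqa E\le\dima E$) is exactly the paper's first step (Corollary \ref{cor_AD}). The genuine gap is in your lower bound. The estimate you need, $N_r(B(x,R)\cap E)\gtrsim(R/r)^{s_{k+1,k+l}}$ at the single scale pair $R\asymp\om_k$, $r\asymp R\,\um_{k+1,k+l}$, cannot be obtained by counting the level-$(k+l)$ offspring of $\bu$, with or without the BNC. The number of those offspring is $\prod_{i=k+1}^{k+l}n_i$, and the defining equation \eqref{def_skl} only gives $\prod_{i=k+1}^{k+l}n_i\ge\om_{k+1,k+l}^{-s}$, whereas you need $(R/r)^{s}=\um_{k+1,k+l}^{-s}$, which is far larger whenever $\uc_i\ll\oc_i$. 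Concretely, take $n_k\equiv2$, $c_{k,1}=\f14$, $c_{k,2}=\f1{64}$ (so $c_*>0$ and both hypotheses hold): then $s=s_{k+1,k+l}\approx0.276$, the offspring count is $2^{l}$, but $(R/r)^{s}=64^{ls}\approx2^{1.65\,l}$, so the bound $N_r\ge\#\{\text{offspring}\}/C$ coming from a cluster estimate can never reach the exponent $s$. The BNC only limits how many offspring one $r$-ball may meet; it cannot create cylinders that are not there. The mass-distribution version fails for the same reason: an $r$-ball lying inside one fat offspring, of diameter $R\,\om_{k+1,k+l}\gg r$ and mass $\om_{k+1,k+l}^{s}\gg(r/R)^{s}$, has mass your level-$(k+l)$ assignment does not control, and you cannot extend the measure self-similarly into that offspring because the deeper sums $\sum_j c_{i,j}^{s}$, $i>k+l$, need not equal $1$.

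The paper's repair is to choose $r$ adaptively rather than at the thinnest diameter: group the offspring into dyadic diameter classes $\mathcal{B}_{m,k}$, pigeonhole a class with $\#\mathcal{B}_{m',k}2^{-m's}\ge2^{-\eps m'}(1-2^{-\eps})$, and count only that class, where Lemma \ref{numberofball} applies because its members have comparable diameters; this is Theorem \ref{dayu}, giving $\dimqa E\ge t_*$. The cost is that the selected scale may be as coarse as the fattest offspring diameter $R\,\om_{k+1,k+l}$, so admissibility $r<R^{1+\eta}$ requires $\om_{k+1,k+l}\le\om_k^{\eta}$, i.e.\ membership in $\uk_{l,\eta}$, not merely in $\kk_{l,\eta}$, which is all your hypothesis paragraph establishes (your chain $|\log\um_{k+1,k+l}|\ge|\log\uc_{k+1}|\ge c'|\log\om_k|$ bounds the thinnest product, which is automatic and too weak). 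This is precisely where BBC must enter the lower bound: by Proposition \ref{BC}, $\om_{k+1,k+l}\le\uc_{k+1}$ once $l$ exceeds the uniform branching bound, whence $\f{\log\om_{k+1,k+l}}{\log\om_k}\ge\f{\log\uc_{k+1}}{\log\om_{k+1}}>\eta$ for large $k$; Corollary \ref{contr} then gives $\lim_k\om_k=0$, so Proposition \ref{general} applies and $t_*\ge\mathsf{S}$ follows. Your proposal uses BBC nowhere in the lower bound, which should have been a warning sign. A final, smaller error: your degenerate case is vacuous under BBC (again Corollary \ref{contr}), and the claim that $\inf_k\om_k>0$ forces $s_{k+1,k+l}\to d$ is false; with $d=1$, $\oc_k=1-2^{-k}$, $\uc_k=2^{-2^{k}}$, one has $\inf_k\om_k>0$ and $\liminf_k\f{\log\uc_k}{\log\om_k}=\infty$, yet every $s_{k+1,k+l}$ stays below $0.9$.
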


Theorem \ref{WhenBNCisneeded} in fact identifies a class of Moran sets that are quasi-Lipschitz Assouad-minimal (i.e., compact sets whose Assouad dimension does not decrease under any quasi-Lipschitz map). The detailed arguments can be found in \cite{lu2016quasi}.

To investigate the quasi-Assouad dimensions of Moran sets without assuming $c_*>0$, we introduce the following definitions.
	\begin{definition}
The Moran structure $\mathcal{M}(J,\{n_k\},\{\ccc_k\})$ is said to be {\em quasi-normal } if there exists a strictly increasing function $\varphi:(0,+\infty)\to (0,+\infty)$ such that
		\[
		\limsup_{l\to \infty}\sup_{k\ge 1}\, \varphi\left(\frac{\log \um_{k+1,k+l}}{\log \om_{k}}\right)\bigg/\frac{\log \om_{k+1,k+l}}{\log \om_{k}}<\infty.
		\]
		It is said to be {\em normal} if there also exists a strictly increasing function $\varphi:(0,+\infty)\to (0,+\infty)$ such that
		 \[
		 \limsup_{l\to \infty}\sup_{k\ge 2}\, \varphi\left(\frac{\log \um_{k+1,k+l+1}}{\log \om_{k-1}}\right)\bigg/\frac{\log \om_{k+1,k+l}}{\log \om_{k}}<\infty.\]

	\end{definition}

The generality of this definition is evidenced by two key cases: first, any $ \mathcal{M}(J,\{n_k\},\{\ccc_k\})$ satisfying $c_*>0$ is normal and consequently quasi-normal (Proposition \ref{quasihomo}); second, all homogeneous Moran structures are quasi-normal, dispensing with the $c_*>0$ requirement altogether.  As direct verification of these properties is often challenging, Proposition \ref{quasihomo} offers a set of simpler and more verifiable criteria.
	
	Next,  we state our main results on quasi-Assouad dimensions of quasi-normal and normal Moran sets.
	\begin{theorem}\label{main1}
		Let $\mathcal{M}(J,\{n_k\},\{\ccc_k\})$ be quasi-normal and satisfy \eqref{club}. Then every   $E\in\mathcal{M}(J,\{n_k\},\{\ccc_k\})$ satisfies
		\[
		\dimqa E=t_*=t.
		\]
	\end{theorem}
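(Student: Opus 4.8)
The plan is to squeeze $\dimqa E$ between the two outer terms of the chain $t_*\le t\le t^*$ by proving the general two–sided covering estimate
\[
t_*\ \le\ \dimqa E\ \le\ t^*,
\]
valid whenever \eqref{club} holds (the upper inequality is exactly where the weak bounded neighbourhood property of Lemma \ref{numberofcap} is consumed), and then to collapse the chain to a single value. The collapse is arithmetic: I would show that quasi-normality forces $t=t_*$ and that \eqref{club} forces $t=t^*$. Granting these, $t_*=t=t^*$, and the displayed sandwich yields $\dimqa E=t_*=t$.

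\emph{Quasi-normality gives $t=t_*$.} Since $\um_{k+1,k+l}\le\om_{k+1,k+l}<1$ and $\om_k<1$, dividing by the negative quantity $\log\om_k$ reverses the inequality and gives $\frac{\log\um_{k+1,k+l}}{\log\om_k}\ge\frac{\log\om_{k+1,k+l}}{\log\om_k}>0$. Let $\varphi$ be the strictly increasing function and $M<\infty$ the constant from quasi-normality, so that $\varphi\bigl(\frac{\log\um_{k+1,k+l}}{\log\om_k}\bigr)\le M\,\frac{\log\om_{k+1,k+l}}{\log\om_k}$ for all large $l$ and all $k$. If $k\in\kk_{l,\eta}$, then $\frac{\log\um_{k+1,k+l}}{\log\om_k}>\eta$, so monotonicity of $\varphi$ gives $\frac{\log\om_{k+1,k+l}}{\log\om_k}>\varphi(\eta)/M$, i.e. $k\in\uk_{l,\varphi(\eta)/M}$. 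Hence $\kk_{l,\eta}\subseteq\uk_{l,\varphi(\eta)/M}$ for all large $l$, and taking $\sup$, then $\limsup_{l}$, then $\eta\to0$ yields $t\le t_*$; with $t_*\le t$ this gives $t=t_*$.

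\emph{Condition \eqref{club} gives $t=t^*$.} The key identity is $\om_k=\om_{k-1}\oc_k$, whence $\frac{\log\om_{k-1}}{\log\om_k}=1-\frac{\log\oc_k}{\log\om_k}\to1$, because $0\le\frac{\log\oc_k}{\log\om_k}\le\frac{\log\uc_k}{\log\om_k}\to0$ by \eqref{club}. Writing $\eps_m=\frac{\log\uc_m}{\log\om_m}\to0$, a short computation shows that a ``bad'' index $k\in\ok_{l+1,\eta}\setminus\kk_{l,\eta'}$ would force the single last step to obey $|\log\uc_{k+l+1}|>(\eta c_0-\eta')\,|\log\om_k|$, where $c_0=\inf_{k\ge2}\frac{|\log\om_{k-1}|}{|\log\om_k|}>0$ comes from the first limit; on the other hand, using $|\log\um_{k+1,k+l}|\le\eta'|\log\om_k|$ to bound $|\log\om_{k+l}|\le(1+\eta')|\log\om_k|$, \eqref{club} bounds $|\log\uc_{k+l+1}|$ by a multiple of $\eps_{k+l+1}\,|\log\om_k|$. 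Since $k+l+1\to\infty$ with $l$, this is impossible for large $l$ once $\eta'<\eta c_0$. Thus $\ok_{l+1,\eta}\subseteq\kk_{l,\eta'}$ for large $l$, giving $t^*\le t$, and with $t\le t^*$ this gives $t=t^*$.

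\emph{The covering estimates and the main obstacle.} For the lower bound $t_*\le\dimqa E$, given $\eta>0$ and large $l$ I would take $k\in\uk_{l,\eta}$ with $s_{k+1,k+l}$ near the supremum, let $R$ be the diameter of a fattest $k$th–level basic set (a word realising $\om_k$), and set $r=\rho_0 R$ at the natural scale $\rho_0=N^{-1/s_{k+1,k+l}}$, where $N=\prod_{i=k+1}^{k+l}n_i$. Since $\um_{k+1,k+l}\le\rho_0\le\om_{k+1,k+l}$, the defining inequality of $\uk_{l,\eta}$ forces $r<R^{1+\eta}$; the disjointness in (MSC) then bounds $N_r(B(x,R)\cap E)$ below by the number of length-$l$ descendants of size $\ge r$, which at the scale $\rho_0$ is $\gtrsim N^{1-\eps}=(R/r)^{s_{k+1,k+l}-\eps}$ (here a second–moment estimate on the spread of the natural weights is needed). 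For the upper bound $\dimqa E\le t^*$, I would cover $B(x,R)\cap E$ by the basic sets $J_\bu$, $\bu\in D(R)$, that meet the ball—boundedly many by Lemma \ref{numberofcap}—and then cover each $J_\bu\cap E$ at scale $r$ via its cut–set $D_\bu(r/|J_\bu|)$; since $R\,\uc_{|\bu|}<|J_\bu|\le R<\om_{|\bu|-1}$, the constraint $r<R^{1+\eta}$ translates precisely into membership in $\ok_{l+1,\eta}$, the denominator $\log\om_{|\bu|-1}$ and the extra last level arising from passing to the parent and absorbing the factor $\uc_{|\bu|}$. The main obstacle is exactly this upper count under genuine inhomogeneity: without $c_*>0$ the scales $R,r$ need not match any basic-set diameter, which forces the passage to the parent level and the absorption of the last-step contraction $\uc_{|\bu|}$—the very features separating $t^*$ from $t$ that are later reconciled by \eqref{club}—while the boundedness of the number of neighbouring basic sets meeting $B(x,R)$ rests entirely on the weak BNC of Lemma \ref{numberofcap}.
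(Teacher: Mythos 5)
Your architecture differs from the paper's in a substantive way, and most of it is correct. The collapse ``quasi-normality $\Rightarrow t=t_*$'' is exactly the paper's own proof of Theorem \ref{main1} (the inclusion $\kk_{l,\eta}\subseteq\uk_{l,\varphi(\eta)/M}$ for large $l$). Your second collapse, ``\eqref{club} $\Rightarrow t=t^*$'', is \emph{not} in the paper, but your sketch of it checks out: if $k\in\ok_{l+1,\eta}\setminus\kk_{l,\eta'}$, then on one hand $|\log\uc_{k+l+1}|>(\eta c_0-\eta')|\log\om_k|$ with $c_0=\inf_{k\ge2}|\log\om_{k-1}|/|\log\om_k|>0$, while on the other hand the thin window forces $|\log \om_{k+l}|\le(1+\eta')|\log\om_k|$, and \eqref{club} (in the form $|\log\uc_m|\le\tfrac{\eps_m}{1-\eps_m}|\log\om_{m-1}|$ with $\eps_m\to0$) then makes $|\log\uc_{k+l+1}|=o(|\log\om_k|)$ uniformly in $k$ as $l\to\infty$, a contradiction once $\eta'<\eta c_0$ and $l$ is large. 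The paper never needs this identity because it proves the sharper covering bound $\dimqa E\le t$ directly under \eqref{club} (Theorem \ref{xiaoyu}); it only records $t=t^*$ under the stronger hypothesis \eqref{cdnllZ} (Corollary \ref{t_*=t^*}), so your observation is in fact slightly stronger than what the paper states. Two caveats on your upper bound $\dimqa E\le t^*$: Lemma \ref{numberofcap} gives $R^{-o(1)}$ many, not boundedly many, sets of $D(R)$ meeting $B(x,R)$ --- still sufficient, since $R^{-\eps}\le(R/r)^{\eps/\eta}$, which is precisely how the paper absorbs this factor; and the cut-set count $\#D_\iii(r/R)\lesssim(R/r)^{s+O(\eps)}$ still needs the $\eps$-absorption of the last-step ratio via \eqref{club} together with Lemma \ref{compensate}, exactly as in the paper's Theorems \ref{xiaoyu} and \ref{xiaoyu2}.

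The genuine gap is in your lower bound $t_*\le\dimqa E$. You fix the single scale $\rho_0=N^{-1/s}$, $s=s_{k+1,k+l}$, $N=\prod_{i=k+1}^{k+l}n_i$, and assert that the number of $\jjj\in D_{k+1,k+l}$ with $c_\jjj\ge\rho_0$ is $\gtrsim N^{1-\eps}$. This is false in general: applying the AM--GM inequality to $1=\prod_{i}\sum_{j}c_{i,j}^{s}$ shows that the average of $\log c_\jjj$ over all $N$ words is at most $\log\rho_0$, with a deficit that grows linearly in $l$ whenever the ratios within a level differ; hence all but an exponentially small fraction of words lie strictly below the scale $\rho_0$. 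Concretely, for $n_i\equiv 2$, $c_{i,1}=\tfrac12$, $c_{i,2}=2^{-T}$ with $T$ large, the number of words of size $\ge\rho_0$ is $N^{\theta_T}$ with $\theta_T\to0$ as $T\to\infty$, so the claim fails for every fixed small $\eps$. A second-moment (concentration) estimate cannot rescue the step --- it proves the opposite, namely that typical words are far smaller than $\rho_0$. The repair is the paper's pigeonhole over dyadic scales in Theorem \ref{dayu}: from $\sum_\jjj c_\jjj^{s}>1$ one finds $m'$ with $\#\{\jjj:2^{-m'-1}<c_\jjj\le 2^{-m'}\}\ge 2^{(s-\eps)m'}(1-2^{-\eps})$, takes $r$ comparable to $2^{-m'}R$ so that all selected pieces have diameter comparable to $r$ and Lemma \ref{numberofball} applies, and notes that the scale restriction survives because every occupied scale is finer than $\om_{k+1,k+l}<\om_k^{\eta}=R^{\eta}$ when $k\in\uk_{l,\eta}$, so $r<R^{1+\eta}$ still holds. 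With that replacement your plan closes; as written, the lower-bound step does not.
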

	
		\begin{corollary}\label{t_*=t^*}
		Let $\mathcal{M}(J,\{n_k\},\{\ccc_k\})$ be quasi-normal and satisfy
\begin{equation} \label{cdnllZ}
		\lim_{l\to \infty}\sup_{k\ge 1}\f{\log \uc_k}{\log \om_{k+1,k+l}}=\lim_{l\to \infty}\sup_{k\ge 1}\f{\log \uc_{k+l+1}}{\log \om_{k+1,k+l}}=0.
\end{equation}
		 Then every $E\in\mathcal{M}(J,\{n_k\},\{\ccc_k\})$ satisfies
		\begin{equation*}
			\dimqa E=t_*=t=t^*.
		\end{equation*}
	\end{corollary}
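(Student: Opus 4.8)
The plan is to derive everything from Theorem~\ref{main1}. That theorem already gives $\dimqa E=t_*=t$ for quasi-normal structures obeying \eqref{club}, and the chain $t_*\le t\le t^*$ recorded below \eqref{def_t*tt*} shows $t\le t^*$ unconditionally; so the corollary reduces to two tasks: (i) check that \eqref{cdnllZ} forces both \eqref{club} and $\lim_{k\to\infty}\om_k=0$, so that Theorem~\ref{main1} applies and $\uk_{l,\eta}$ exhausts $\N^+$; and (ii) prove the reverse inequality $t^*\le t$.

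For (i) I would first record the multiplicativities $\om_k=\om_{k-1}\oc_k$, $\om_{k+1,k+l}=\prod_{i=k+1}^{k+l}\oc_i$ and $\um_{k+1,k+l}=\prod_{i=k+1}^{k+l}\uc_i$. The first identity in \eqref{cdnllZ} cannot hold unless $\sum_{i>k}|\log\oc_i|=\infty$ for every $k$: otherwise $|\log\om_{k+1,k+l}|$ would stay bounded as $l\to\infty$ while $|\log\uc_k|$ is a fixed positive constant, so the supremum could not tend to $0$. Hence $\lim_{k}\om_k=0$. To get \eqref{club} I reindex the second identity in \eqref{cdnllZ} by $m=k+l+1$: it states that $|\log\uc_m|\big/\sum_{i=m-l}^{m-1}|\log\oc_i|\to0$ uniformly in $m$, and since $\sum_{i=m-l}^{m-1}|\log\oc_i|\le|\log\om_{m-1}|\le|\log\om_m|$ this yields $|\log\uc_m|/|\log\om_m|\to0$, which is exactly \eqref{club}. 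Theorem~\ref{main1} then gives $\dimqa E=t_*=t$.

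For (ii) fix small $0<\eta'<\eta$. For $k\in\ok_{l+1,\eta}$ I factor the defining $\kk$-ratio against the $\ok$-ratio,
\[
\frac{\log\um_{k+1,k+l}}{\log\om_k}=\frac{\log\um_{k+1,k+l+1}}{\log\om_{k-1}}\cdot\frac{1-|\log\uc_{k+l+1}|/|\log\um_{k+1,k+l+1}|}{1+|\log\oc_k|/|\log\om_{k-1}|},
\]
and control the two correction factors separately. Using $\um_{k+1,k+l+1}\le\om_{k+1,k+l}$, the numerator defect is at most $|\log\uc_{k+l+1}|/|\log\om_{k+1,k+l}|$, which tends to $0$ uniformly in $k$ by the second identity in \eqref{cdnllZ}; the denominator excess satisfies $|\log\oc_k|/|\log\om_{k-1}|\le|\log\uc_k|/|\log\om_{k-1}|\to0$ as $k\to\infty$ by \eqref{club}. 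Thus for every $\eps>0$ there are $K_0$ and $L_0$ such that, for $l\ge L_0$, every $k\in\ok_{l+1,\eta}$ with $k>K_0$ satisfies $\log\um_{k+1,k+l}/\log\om_k>\eta(1-\eps)/(1+\eps)>\eta'$, i.e.\ $k\in\kk_{l,\eta'}$; and for the finitely many $k\le K_0$ the ratio $|\log\um_{k+1,k+l}|/|\log\om_k|\to\infty$ as $l\to\infty$ (since $\om_k\to0$ forces $\um_{k+1,k+l}\to0$), so these indices too lie in $\kk_{l,\eta'}$ once $l$ is large. Hence $\ok_{l+1,\eta}\subseteq\kk_{l,\eta'}$ for all large $l$, so $\sup_{k\in\ok_{l+1,\eta}}s_{k+1,k+l}\le\sup_{k\in\kk_{l,\eta'}}s_{k+1,k+l}$; taking $\limsup_{l\to\infty}$ and then letting $\eta\to0$ (whence $\eta'\to0$) yields $t^*\le t$, and combined with (i) we obtain $\dimqa E=t_*=t=t^*$.

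The step I expect to be the main obstacle is (ii): the comparison of $\ok_{l+1,\eta}$ with $\kk_{l,\eta'}$ must absorb simultaneously a change of block length (from $l+1$ to $l$) and a change of normalization (from $\om_{k-1}$ to $\om_k$). The two identities in \eqref{cdnllZ} are tailored precisely to make the boundary symbols $\uc_{k+l+1}$ and $\uc_k$ negligible, so the only genuine subtlety is that \eqref{club} supplies no uniform control over the bounded range $k\le K_0$, which must be handled by the separate divergence argument made possible by $\om_k\to0$.
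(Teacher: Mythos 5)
Your proposal is correct, but it reaches the crux $t^*\le t$ by a genuinely different mechanism than the paper. The paper relies on Lemma \ref{t=t^*}, a stability result for the \emph{values} of the pressure equations: both identities in \eqref{cdnllZ} are used to show that $s_{k,k+l}$, $s_{k+1,k+l}$ and $s_{k+1,k+l+1}$ agree up to an error vanishing uniformly in $k$, after which $t^*$ is converted into $t$ by reindexing together with the monotonicity $k+1\in\ok_{l,\eta}\Rightarrow k\in\kk_{l+1,\eta}$ at the \emph{same} threshold $\eta$. You never compare pressure-equation solutions at all: since $t^*$ and $t$ are suprema of the identical quantity $s_{k+1,k+l}$, you instead prove the index-set inclusion $\ok_{l+1,\eta}\subseteq\kk_{l,\eta'}$ for all large $l$, paying for the boundary factors $\uc_{k+l+1}$ and $\oc_k$ with a degraded threshold $\eta'<\eta$ --- uniformly in $k$ via the second identity of \eqref{cdnllZ} for the numerator defect, via \eqref{club} for $k>K_0$, and by divergence of $|\log \um_{k+1,k+l}|$ for the finitely many $k\le K_0$. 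Your route is more elementary (Lemma \ref{t=t^*} is bypassed), it makes explicit the implication \eqref{cdnllZ}$\Rightarrow$\eqref{club} that the paper declares immediate, and it in fact shows the conclusion holds under quasi-normality plus the \emph{second} identity of \eqref{cdnllZ} alone: your only use of the first identity is to get $\om_k\to 0$, which is dispensable because $n_i\ge 2$ and $\sum_j c_{i,j}^d\le 1$ give $\uc_i\le 2^{-1/d}$, hence $\um_{k+1,k+l}\le 2^{-l/d}\to 0$ for each fixed $k$ anyway. What the paper's heavier route buys is Lemma \ref{t=t^*} itself, whose value-comparison technique is echoed elsewhere (e.g.\ in the proof of Proposition \ref{general}). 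One small repair: your assertion that \eqref{club} yields $|\log\uc_k|/|\log\om_{k-1}|\to 0$ is not literal, since $|\log\om_{k-1}|\le|\log\om_k|$ makes that ratio the larger one; either bootstrap as the paper does in the proof of Theorem \ref{xiaoyu}, using $\f{x}{1+x}\le\f{\log\uc_k}{\log\om_k}$ with $x=\f{\log\uc_k}{\log\om_{k-1}}$, or simply observe that your own reindexing in step (i) already produced the stronger bound $|\log\uc_m|\le\eps\,|\log\om_{m-1}|$, which is exactly the form step (ii) requires.
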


	The next  conclusion is a direct consequence of Theorem \ref{main1} which was first proved by   L\"{u} and Xi in \cite{lu2016quasi}, 
	
	\begin{corollary} 
		Let $\mathcal{M}(J,\{n_k\},\{\ccc_k\})$ be homogeneous with   \eqref{club}. Then every $E\in\mathcal{M}(J,\{n_k\},\{\ccc_k\})$ satisfies
		\begin{equation*}
			\dimqa E=t_*=t.
		\end{equation*}
	\end{corollary}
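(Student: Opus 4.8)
The plan is to read this off directly from Theorem~\ref{main1}, whose two hypotheses are quasi-normality and \eqref{club}. Since \eqref{club} is assumed outright, the entire task reduces to checking that a homogeneous Moran structure is automatically quasi-normal; this is exactly the second assertion of Proposition~\ref{quasihomo}, but I would also verify it directly, since in the homogeneous case the quasi-normality condition degenerates completely.

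The key observation is that when $c_{k,j}=c_k$ for every $1\le j\le n_k$, the product $c_\bu=\prod_{i=p}^{q}c_i$ attached to a word $\bu\in D_{p,q}$ does not depend on the word at all. Hence $\om_{k+1,k+l}=\um_{k+1,k+l}$ for all $k\ge 1$ and $l\ge 1$. Choosing $\varphi$ to be the identity map on $(0,+\infty)$---which is strictly increasing and positive-valued---the quotient in the definition of quasi-normality,
\[
\varphi\!\left(\frac{\log \um_{k+1,k+l}}{\log \om_{k}}\right)\bigg/\frac{\log \om_{k+1,k+l}}{\log \om_{k}},
\]
is identically equal to $1$. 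Its double-supremum limit is therefore $1<\infty$, and the structure is quasi-normal. (The arguments of $\varphi$ are positive because each $c_i\le 1$ forces the relevant logarithms to share the same sign.)

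With quasi-normality established and \eqref{club} in force, Theorem~\ref{main1} applies verbatim and gives $\dimqa E=t_*=t$ for every $E\in\mathcal{M}(J,\{n_k\},\{\ccc_k\})$. I do not expect any genuine obstacle here: the whole proof rests on the trivial identity $\om_{k+1,k+l}=\um_{k+1,k+l}$, which is what makes the homogeneous hypothesis a clean special case of the quasi-normal one, thereby recovering the result of L\"u and Xi.
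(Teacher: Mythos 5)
Your proof is correct and takes essentially the same route as the paper: the paper also treats this corollary as a direct consequence of Theorem \ref{main1} combined with the observation that homogeneous Moran structures are automatically quasi-normal, which is exactly what you verify via the identity $\om_{k+1,k+l}=\um_{k+1,k+l}$ and the choice $\varphi=\mathrm{id}$. One small citation slip: this quasi-normality claim is stated in the paper's text following the definition, not as the second assertion of Proposition \ref{quasihomo} (part (ii) of that proposition gives criteria for \emph{failure} of quasi-normality), but since you supply the direct verification yourself, this is immaterial.
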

	
The final result concerns the quasi-Assouad dimensions of normal Moran sets.
	\begin{theorem}\label{main2}
		Let $\mathcal{M}(J,\{n_k\},\{\ccc_k\})$ be normal and satisfy BBC. Then every $E\in\mathcal{M}(J,\{n_k\},\{\ccc_k\})$ satisfies
		\[
		\dimqa E=t_*=t^*.
		\]
	\end{theorem}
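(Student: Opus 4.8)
The plan is to squeeze $\dimqa E$ between $t_*$ and $t^*$ and then show that normality collapses these two numbers. Since the chain $t_*\le t\le t^*$ always holds, it suffices to establish three facts: the general lower bound $\dimqa E\ge t_*$; the general upper bound $\dimqa E\le t^*$, valid whenever the BNC holds; and the combinatorial identity $t^*\le t_*$ forced by normality. As BBC implies the BNC (Proposition~\ref{BC->BNC}), combining these gives $t_*\le\dimqa E\le t^*\le t_*$, hence $\dimqa E=t_*=t^*$. The first two are exactly the covering estimates developed for Theorems~\ref{no.1} and~\ref{main1}; the genuinely new input is the reduction $t^*\le t_*$, which I isolate first.

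\emph{Normality gives $t^*\le t_*$.} Unwinding the definition of normality, fix the strictly increasing $\varphi$ and choose $C>0$, $L\in\N$ with
\[
\varphi\!\left(\frac{\log \um_{k+1,k+l+1}}{\log \om_{k-1}}\right)\le C\,\frac{\log \om_{k+1,k+l}}{\log \om_{k}}\qquad\text{for all }l\ge L,\ k\ge 2.
\]
Fix $\eta>0$ and set $\eta'=\varphi(\eta)/C>0$. If $k\in\ok_{l+1,\eta}$ then $\log\um_{k+1,k+l+1}/\log\om_{k-1}>\eta$, so monotonicity of $\varphi$ together with the displayed inequality yields $\log\om_{k+1,k+l}/\log\om_{k}>\eta'$, i.e. $k\in\uk_{l,\eta'}$. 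Thus $\ok_{l+1,\eta}\subseteq\uk_{l,\eta'}$ for all $l\ge L$; taking suprema over the index sets, then $\limsup_{l\to\infty}$, and using that $\limsup_{l}\sup_{k\in\uk_{l,\eta'}}s_{k+1,k+l}\le t_*$ for every fixed $\eta'>0$, gives $\limsup_{l}\sup_{k\in\ok_{l+1,\eta}}s_{k+1,k+l}\le t_*$. Letting $\eta\to0$ gives $t^*\le t_*$, and with $t_*\le t^*$ we obtain $t_*=t^*$.

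\emph{Upper bound $\dimqa E\le t^*$ under the BNC.} Here I fix $\eps>0$ and show $h_E(\eta)\le t^*+\eps$ for all small $\eta$. Given $x\in E$ and $0<r<R^{1+\eta}<R<\rho$, I cover $B(x,R)\cap E$ by the basic sets of $D(R)$ that meet it; the BNC bounds their number by an absolute constant, reducing matters to estimating $N_r(J_\bu\cap E)$ for a single $\bu\in D(R)$ (so $c_\bu\approx R$). Inside $J_\bu$ I pass to the relative stopping family at scale $r$ and bound its cardinality, the BNC again controlling how many stopped descendants meet a ball of radius $r$; this yields $N_r(J_\bu\cap E)\lesssim (R/r)^{s_{k+1,k+l}}$, where $k$ is the level with $\om_{k-1}\gtrsim R$ and $k+l$ is the level at which the thinnest descendants of $J_\bu$ reach size $r$. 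The admissibility constraint $r<R^{1+\eta}$, read through this worst-case bookkeeping, forces $\log\um_{k+1,k+l+1}/\log\om_{k-1}>\eta$, i.e. $k\in\ok_{l+1,\eta}$, whence $s_{k+1,k+l}\le\sup_{k\in\ok_{l+1,\eta}}s_{k+1,k+l}\le t^*+\eps$ for $\eta$ and $\rho$ small. I expect this to be the main obstacle: because $c_*>0$ is not assumed, descendants at a fixed level vary wildly in size and the scale-to-level correspondence is only approximate, so the precise shift to $\um_{k+1,k+l+1}$ and $\om_{k-1}$ (rather than $\um_{k+1,k+l}$ and $\om_{k}$) must be justified while every multiplicative constant is kept under control via the BNC.

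\emph{Lower bound $\dimqa E\ge t_*$ and conclusion.} This follows the construction behind Theorem~\ref{main1}. For fixed $\eta>0$ and any $k\in\uk_{l,\eta}$, select a fattest level-$k$ basic set $J_\bu$ (so $c_\bu=\om_k$), put $R=|J_\bu|$, pick $x\in J_\bu\cap E$, and choose a scale $r$ with $\log r/\log R\approx 1+\log\om_{k+1,k+l}/\log\om_{k}$; the defining inequality of $\uk_{l,\eta}$ is then precisely the assertion that $r<R^{1+\eta}$, so the pair $(R,r)$ is admissible for $h_E(\eta)$. Distributing the equilibrium measure $\mu(J_{\bu*\jjj})=c_{\jjj}^{\,s_{k+1,k+l}}$ over the descendants of $J_\bu$ and estimating the $\mu$-mass of $r$-balls gives $N_r(B(x,R)\cap E)\ge (R/r)^{s_{k+1,k+l}-\eps}$, so $h_E(\eta)\ge\sup_{k\in\uk_{l,\eta}}s_{k+1,k+l}-\eps$ for all large $l$; letting $l\to\infty$ and then $\eta\to0$ yields $\dimqa E\ge t_*$. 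Combining the three parts closes the chain $t_*\le\dimqa E\le t^*=t_*$, proving $\dimqa E=t_*=t^*$.
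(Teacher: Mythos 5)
Your three-part plan is exactly the paper's (omitted) proof: the paper disposes of Theorem \ref{main2} in one line, as the combination of the general lower bound $\dimqa E\ge t_*$ (Theorem \ref{dayu}), the upper bound $\dimqa E\le t^*$ under BBC (Theorem \ref{xiaoyu2}), and the normality analogue of the $\varphi$-argument used for Theorem \ref{main1}. Your normality step is correct and is precisely the intended one: from $\varphi\big(\tfrac{\log\um_{k+1,k+l+1}}{\log\om_{k-1}}\big)\le C\,\tfrac{\log\om_{k+1,k+l}}{\log\om_k}$ for $k\ge 2$, $l\ge L$, you deduce $\ok_{l+1,\eta}\subseteq\uk_{l,\varphi(\eta)/C}$, and monotonicity of $\eta'\mapsto\limsup_{l}\sup_{k\in\uk_{l,\eta'}}s_{k+1,k+l}$ gives $t^*\le t_*$; the index shifts ($l+1$ versus $l$, $\om_{k-1}$ versus $\om_k$) are handled exactly as the definitions require.

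The one place where your write-up, read literally, does not hold up is the claim that the upper bound $\dimqa E\le t^*$ is ``valid whenever the BNC holds.'' That is not what the paper proves, and your sketch does not close it. Theorem \ref{xiaoyu2} assumes BBC, and BBC is used for more than producing the BNC: via Proposition \ref{BC} it yields $\sup_{k}n_k<\infty$ and a uniform bound $N$ on how many elements of $D_{\iii}(\de)$ lie over a common parent, and it is exactly this multiplicity bound that lets one dominate $\#D_{\iii}\big(\tfrac rR\big)\big(\tfrac rR\big)^s$ by $N\prod_{k}\sum_{j}c_{k,j}^s$ through Lemma \ref{compensate} (display \eqref{numberofD_ide}). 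The BNC, by contrast, only counts stopping cells meeting a ball whose radius is comparable to the stopping scale; it gives no control on the total cardinality of the relative stopping family $D_{\iii}\big(\tfrac rR\big)$ inside a single $J_{\iii}$ --- and that cardinality, not a ball-count at one scale, is what your covering argument needs, since without $c_*>0$ the cells of $D_{\iii}\big(\tfrac rR\big)$ have diameters ranging from $r$ down to arbitrarily small values, so no single-scale intersection bound captures them. Because BBC is among your hypotheses, the repair is simply to invoke Theorem \ref{xiaoyu2} rather than assert a BNC-only bound. The same caution applies, more mildly, to your lower bound: the naive mass-distribution estimate of the measure of an $r$-ball fails for the same ``wildly varying sizes'' reason, which is why Theorem \ref{dayu} first pigeonholes into the dyadic size classes $\mathcal{B}_{m,k}$ before applying Lemma \ref{numberofball}; citing Theorem \ref{dayu} (which requires no hypotheses) avoids the issue. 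With those two citations in place of your sketches, your proof is complete and coincides with the paper's.
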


Finally, we conclude by presenting two examples. The first example shows that the dimension inequality \eqref{dimHBAQ} is typically strict for Moran fractals. The second example demonstrates that the conclusions of Theorems \ref{main1} and \ref{main2} may fail when either \eqref{club} or BBC does not hold, and that
	$\lim_{k\to\infty} \om_k>0$ does not necessarily imply that the sequence $\{s_{k,k+l}\}$ exhibits ``trivial" behavior.  	
	\begin{example}\label{exa'}
		Let ${a_n}$ be a sequence of positive integers with $a_1=1$, satisfying $a_{n+1}>2a_n+n$ for all $n$, and assume that
		\[
		\lim_{n\to\infty}\f n{a_n}=\lim_{n\to\infty}\f {a_1+a_2+\cdots+a_{n-1}}{a_n}=0.
		\]
		Let $E\in \mathcal{M}([0,1],\{n_k\},\{\ccc_{k}\})$, where
		\[
		n_k\equiv 2,\quad c_{k,1}=\f12,\quad c_{k,2}=\left\{\begin{array}{ll}
			\f 14,\quad & a_n\le k<2a_n\vspace{0.5em}\\
			\f12, & 2a_n\le k<2a_n+n\vspace{0.5em}\\
			\f18, & 2a_n+n\le k<a_{n+1}
		\end{array}
		\right.,\quad n\in\N^+.
		\]
		Then the structure is normal, and the dimensions of $E$ satisfy
		\[
		\dimh E=-\f{\log p}{\log 2}<\odimb E=-\f{\log q}{\log 2}<\dimqa E=-\f{\log r}{\log 2}<\dima E=1,
		\]
		where $p,q,r$ satisfy $p+p^3=(q+q^2)(q+q^3)=r+r^2=1.$
	\end{example}
	
	\begin{example}\label{exa}
		Given $0<\alpha<\beta$, let $E\in \mathcal{M}([0,1],\{n_k\},\{\ccc_{k}\})$ where
		\[
		n_k\equiv 2,\quad c_{k,1}=1- (k+1)^{-\alpha},\quad c_{k,2}=(k+1)^{-\beta}.
		\] 
		Then the structure is not quasi-normal, and
		\[
		\lim_{m\to\infty}\sup_{k\ge m,l\ge m}s_{k,k+l}=\f\alpha\beta.
		\]
Assume that for any $\iii\in D^*$, the left and right endpoints of 
		$J_\iii$ coincide  respectively  with the left endpoint of
		$J_{\iii*1}$ and the right endpoint of $J_{\iii*2}$. Then
		\begin{enumerate}[(i)]
			\item For $\alpha>1$, one has $\dimqa E=\dima E=1$, whereas
			\[
			\f\alpha\beta<\lim_{l\to\infty} s_{1,l}\le\lim_{l\to\infty}\sup_{k\ge 1} s_{k,k+l}\le\max \Big\{\f\alpha\beta+\f{\log\beta-\log\alpha}{\beta\log 3},s_{1}\Big\}<1.
			\]
			In particular, if $\beta\ge\f{2^\alpha+1}{2^\alpha-1}2^{\f{\alpha}{2^\alpha-1}}\alpha$\footnote[2]{Note that $\f{2^\alpha+1}{2^\alpha-1}2^{\f{\alpha}{2^\alpha-1}}$ converges to $1$ exponentially fast as $\alpha$ tends to $\infty$. Hence, roughly speaking, this means that in ``most" cases with $\beta>\alpha>1$, we have $\lim_{l\to\infty} s_{1,l}=\lim_{l\to\infty}\sup_{k\ge 1} s_{k,k+l}$.}, then $\lim_{l\to\infty} s_{1,l}=\lim_{l\to\infty}\sup_{k\ge1} s_{k,k+l}.$ 
			\item For $\alpha= 1$, we have 
			\[
			\f\alpha\beta=\lim_{l\to\infty} s_{1,l}=\lim_{l\to\infty}\sup_{k\ge1} s_{k,k+l}<\dimqa E=\dima E=1.
			\]
			
			\item For $0<\alpha< 1$, we have 
			\[
			\f\alpha\beta=\lim_{l\to\infty} s_{1,l}=\lim_{l\to\infty}\sup_{k\ge1} s_{k,k+l}=\dimqa E<\dima E=1.
			\]
		\end{enumerate} 
	\end{example}

	The remainder of the paper  is organized as follows. In Section \ref{sec_qhs}, we study quasi-normal and normal sets.
	In Section \ref{sec_glb}, we show that $t_*$ is  a general lower bound for the quasi-Assouad dimension of Moran sets. In Section \ref{sec_qadms}, we establish the quasi-Assouad dimension formulae for Moran sets under the assumptions of \eqref{club} or BBC, and provide the proofs of Theorems \ref{no.1}, \ref{main1}, and \ref{main2}. Finally, we investigate the dimension formulae of Moran sets and prove Theorem \ref{WhenBNCisneeded} and Proposition \ref{general}; the detailed computations for the examples discussed in this paper are presented in Section \ref{Sec_ADE}.

	\section{ Quasi-normal sets }\label{sec_qhs}
The properties of BBC, quasi-normality, and normality for a Moran structure are inherently difficult to verify directly from their definitions. To remedy this, we establish several alternative characterizations that are substantially easier to check in this section.

	\begin{proposition}\label{BC}
		For the Moran structure $\mathcal{M}(J,\{n_k\},\{\ccc_k\})$, define  $b_k=\min\big\{l:\om_{k,k+l}\le \uc_k\big\}$. Then $\mathcal{M}(J,\{n_k\},\{\ccc_k\})$ satisfies BBC  if and only if $	\sup_{k\ge 1} b_k <\infty \text{\ and } \sup_{k\ge 1} n_k<\infty.$	\end{proposition}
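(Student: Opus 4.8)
The starting point is the elementary product formula $\om_{k,k+l}=\prod_{i=k}^{k+l}\oc_i$ (the maximal product of contraction ratios is the product of the level-wise maxima), so that $b_k=\min\{l:\prod_{i=k}^{k+l}\oc_i\le\uc_k\}$ measures how many levels the \emph{fattest} branch issued from level $k$ must descend before it becomes shorter than the thinnest single child $\uc_k$. I will also use repeatedly that $n_i\ge 2$ together with $\sum_j c_{i,j}^{\,d}\le 1$ forces $\oc_i<1$ and $\uc_i\le 2^{-1/d}$, whence $\um_k\le 2^{-k/d}\to 0$ and the thinnest-child descent from any node drives $c_\bu$ to $0$. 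The key reinterpretation is that, for $\bu\in D(\de)$ with parent $\bu^-$ at level $p$, the family $\{\bu'\in D(\de):\bu'=\bu^-*\iii\}$ is exactly the set of $D(\de)$-cut-nodes lying below $\bu^-$, i.e. the words $\iii$ starting at level $p+1$ with $c_\iii\le\tau<c_{\iii^-}$, where $\tau=\de/c_{\bu^-}\in[\uc_{p+1},1)$ (the lower bound coming from $c_\bu=c_{\bu^-}c_{p+1,\cdot}\le\de$).

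For the ``if'' direction, suppose $N:=\sup_k n_k<\infty$ and $B:=\sup_k b_k<\infty$. Given such $\bu,\bu^-,\tau$, since $\tau\ge\uc_{p+1}$ and $\om_{p+1,p+1+b_{p+1}}\le\uc_{p+1}$ by definition of $b_{p+1}$, every word of length $b_{p+1}+1$ issued from level $p+1$ already has $c_\iii\le\tau$; hence every cut-node below $\bu^-$ has length at most $b_{p+1}+1\le B+1$. As each level contributes at most $N$ children, the number of such words is at most $\sum_{l=1}^{B+1}N^l\le(B+1)N^{B+1}$, a bound independent of $\de$ and $\bu$, which gives BBC.

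For the ``only if'' direction I argue by contraposition, showing that if $\sup_k n_k=\infty$ or $\sup_k b_k=\infty$ then BBC fails. If $\sup_k n_k=\infty$, I choose $k_j\to\infty$ with $n_{k_j}\to\infty$, take $\bv_j$ a thinnest node at level $k_j-1$ (so $c_{\bv_j}=\um_{k_j-1}\le 2^{-(k_j-1)/d}\to 0$), and set $\de_j=c_{\bv_j}\oc_{k_j}<c_{\bv_j}$; then \emph{every} child $\bv_j*i$ satisfies $c_{\bv_j*i}\le\de_j<c_{\bv_j}$, so all $n_{k_j}$ children are mutually sibling cut-nodes and the BBC count at $\de_j\to 0$ exceeds $n_{k_j}\to\infty$. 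If instead $\sup_k b_k=\infty$, I first extract $k_j\to\infty$ with $b_{k_j}\to\infty$; the one delicate point is that $b_k=\infty$ can occur (precisely when $\prod_{i\ge k}\oc_i>0$), but then convergence of the product forces $\prod_{i\ge k'}\oc_i\to 1>\uc_{k'}$, so $b_{k'}=\infty$ for all large $k'$ and such a sequence still exists. Taking again $\bv_j$ thinnest at level $k_j-1$ and $\de_j=c_{\bv_j}\uc_{k_j}\to 0$, the thinnest child of $\bv_j$ is itself a cut-node (anchoring $\bu^-=\bv_j$), while the fattest branch below $\bv_j$ survives to depth $b_{k_j}+1$ (or is never cut when $b_{k_j}=\infty$); a counting argument along this deepest branch — at each of its surviving ancestors select an off-path child and follow thinnest descents to a cut-node, the resulting cut-nodes lying in pairwise disjoint subtrees — produces at least $b_{k_j}+1$ (resp. infinitely many) sibling cut-nodes below $\bv_j$, so the count diverges.

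The hard part will be this last step: extracting a quantitative \emph{lower} bound on the number of $D(\de)$-descendants of a node from the single quantity $b_k$, while simultaneously arranging $\de\to 0$, and in particular treating the degenerate regime $\om_k\not\to 0$ (where $b_k=\infty$ and $D(\de)$ may itself be infinite) on the same footing as the generic case.
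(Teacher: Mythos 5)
Your proposal is correct and follows essentially the same route as the paper's proof: the sufficiency direction rests on the identical observation that every sibling cut-word has length at most $\sup_k b_k+1$, giving an $N^{B+1}$-type count, and the necessity direction uses the same combinatorial mechanism of walking down the fattest branch below an anchor whose thinnest child is a cut-node and collecting off-path cut-nodes in pairwise disjoint subtrees. The differences are only organizational: you argue necessity by contraposition in two separate cases and anchor at the \emph{thinnest} node at level $k_j-1$ (which makes $\de_j\to 0$ immediate from $\um_k\le 2^{-k/d}$ and lets you treat the degenerate regime $b_k=\infty$ explicitly via the infinite-product argument), whereas the paper anchors at the fattest node $\bu_{k-1}$ with $\de_k=\om_{k-1}\uc_k$ and extracts both $\sup_k b_k<\infty$ and $\sup_k n_k<\infty$ simultaneously from the single lower bound $\sum_{i=0}^{b_k}n_{k+i}-b_k$.
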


	\begin{proof}
		Without loss of generality, assume that for each $k\geq 1$, 
		\[
		c_{k,1}\ge c_{k,2}\ge\cdots\ge c_{k,n_k}.
		\]
		Denote by $\bu_k=11\ldots1\in D_k$.

		Suppose that BBC holds, and let $\de_k=\om_{k-1}\uc_k$ for $k\ge2$.  Since $\uc_k^d\le 1-\oc_k^d$ for every $k>0$, $\lim_{k\to \infty} \delta_k=0$. We obtain that
		\[
		\{\bu\in D(\de_k):
		\bu=\bu_m*i*\jjj \text{\ for some\ }\jjj\,
		\}
		\neq \emptyset
		\]
		for $	k-1\le m\le k+b_k-1,\,1\le i\le n_{m+1}$. Note that the case $\jjj=\emptyset$ may occur.
		
		Therefore, it follows that
		\begin{IEEEeqnarray*}{rCl}
			&&\limsup_{k\to\infty}\sup_{\bu\in D(\de_k)}\#\{\bu'\in D(\de_k):\bu'=\bu^-*\iii\text{\ for\ some\ }\iii\}\\
			&\ge& \limsup_{k\to\infty}\#\{\bu'\in D(\de_k):\bu'= \bu_{k-1}*\iii\text{\ for\ some\ }\iii\}\\
			&\ge
			&\limsup_{k\to\infty}\Big(\sum_{i=0}^{b_k-1}\sum_{j=2}^{n_{k+i}}\#	\left\{\bu\in D(\de_k): 
			\bu=\bu_{k+i-1}*j*\jjj \text{ for some }\jjj
			\right\}  +n_{k+b_k}\Big)\\
			&\ge&\limsup_{k\to\infty} \Big(\sum_{i=0}^{b_k}n_{k+i}-b_k\Big).
		\end{IEEEeqnarray*}
		Moreover, the preceding argument implies that $b_k<\infty$ for all $k$. Thus we immediately obtain $\sup_{k\ge 1}\Big(\sum_{i=0}^{b_k}n_{k+i}-b_k\Big)<\infty,$ and the conclusion holds since $n_k\ge 2$ for each $k$.
		
		Conversely, suppose that $\sup_k b_k<L$ and $\sup_k n_k<N$ for some integers $L,N$. Fix  $\de>0$. Since $	c_{\bu^-*\bu_L}\le c_{\bu^-*n_{|\bu|}}\le c_\bu\le \de $  for all $\bu\in D(\de)$,  
		we have
		\[
		\sup_{\bu\in D(\de)}\{|\iii|:\bu'=\bu^-*\iii\in D(\de)\}\le L, 
		\]
and it implies that 
		\begin{align*}
\sup_{\bu\in D(\de)}\#\{\bu'\in D(\de):\bu'=\bu^-*\iii\text{ for some }\iii\}
			&\le \sup_{\bu\in D(\de)}\# D_{|\bu|,|\bu|+L-1} \le N^L.
		\end{align*}
Letting $\delta \to 0$ completes the proof.
	\end{proof}
	
	\begin{remark}
		In contrast to \eqref{club}, $\sup_{k\ge 1} b_k<\infty$ indicates that BBC provides a control between the contraction ratio of the “thinnest” basic set at each level and the cumulative contraction ratio of the “fattest” basic sets in the subsequent levels of Moran sets.
	\end{remark}

	
	\begin{corollary}\label{contr}
		Suppose $ \mathcal{M}(J,\{n_k\},\{\ccc_k\})$ satisfies BBC.
		Then
		\[
		\lim_{l\to \infty}\sup_{k\ge 1} \om_{k,k+l}=0.
		\]
	\end{corollary}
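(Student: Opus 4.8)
The plan is to reduce everything to a uniform geometric decay estimate for the maximal products. First I would record the elementary identity
\[
\om_{k,k+l}=\max_{\bu\in D_{k,k+l}}c_\bu=\prod_{i=k}^{k+l}\oc_i,
\]
which holds because the contraction ratios factor across levels, so the maximising word is obtained by choosing the largest ratio at each level independently. In particular $\om_{k,k+l}$ is non-increasing in $l$, since $\oc_i\le 1$ for every $i$.

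Next I would extract from BBC, via Proposition \ref{BC}, the constant $L:=\sup_{k\ge1}b_k<\infty$, where $b_k=\min\{l:\om_{k,k+l}\le\uc_k\}$; thus $\om_{k,k+b_k}\le\uc_k$ with $b_k\le L$ for every $k$. The crucial additional observation is that $\uc_k$ is uniformly bounded away from $1$: since $c_{k,1},\dots,c_{k,n_k}$ are positive with $\sum_{j}c_{k,j}^{\,d}\le1$ and $n_k\ge2$, the smallest of them satisfies $n_k\,\uc_k^{\,d}\le\sum_j c_{k,j}^{\,d}\le1$, whence $\uc_k\le 2^{-1/d}=:\gamma<1$. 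Combining these, I obtain a one-block estimate: within at most $L+1$ consecutive levels starting from any level $k$, the running product is contracted by a factor at most $\gamma$, namely
\[
\om_{k,k+b_k}=\prod_{i=k}^{k+b_k}\oc_i\le\uc_k\le\gamma.
\]

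I would then iterate this one-block estimate by a telescoping argument. Setting $k_0=k$ and $k_{j+1}=k_j+b_{k_j}+1$, the levels $k_j,\dots,k_{j+1}-1$ form consecutive disjoint blocks, each of length $b_{k_j}+1\le L+1$ and each contributing a factor $\prod_{i=k_j}^{k_{j+1}-1}\oc_i\le\gamma$. Telescoping over $m$ blocks gives $\prod_{i=k}^{k_m-1}\oc_i\le\gamma^{m}$, while $k_m-1\le k+m(L+1)-1$. Hence, given $l$, taking $m=\lfloor l/(L+1)\rfloor$ ensures $k_m-1\le k+l$, and by the monotonicity noted above,
\[
\om_{k,k+l}=\prod_{i=k}^{k+l}\oc_i\le\prod_{i=k}^{k_m-1}\oc_i\le\gamma^{\lfloor l/(L+1)\rfloor}.
\]
Since the right-hand side does not depend on $k$, I conclude $\sup_{k\ge1}\om_{k,k+l}\le\gamma^{\lfloor l/(L+1)\rfloor}\to0$ as $l\to\infty$.

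The only genuinely non-formal point is the uniform bound $\uc_k\le\gamma<1$: without it, the definition of $b_k$ alone would only yield a contraction to $\le\uc_k$ per block, which a priori need not be bounded away from $1$, and the geometric decay would collapse. The MSC constraint $\sum_j c_{k,j}^{\,d}\le1$ together with $n_k\ge2$ is exactly what forces a fixed geometric rate $\gamma$ and makes the chaining work; everything else is bookkeeping with the block decomposition supplied by Proposition \ref{BC}.
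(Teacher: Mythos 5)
Your proof is correct and takes essentially the same route as the paper's: both iterate the one-block contraction supplied by Proposition \ref{BC} (the paper with fixed blocks of length $L$, you with variable blocks of length $b_{k_j}+1$) and then invoke the MSC constraint $\sum_j c_{k,j}^{\,d}\le 1$, $n_k\ge 2$ to get a uniform geometric decay rate. If anything you are slightly more careful than the paper, which bounds each factor $\uc_k$ by $\tfrac12$ (strictly valid only when $d=1$), whereas your constant $\gamma=2^{-1/d}<1$ handles all dimensions; the conclusion is unaffected either way.
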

	\begin{proof}
	By Proposition \ref{BC}, there exists $L>0$ such that $\om_{k,k+L-1}<\uc_k$ for all $k$. For each $l>0$, write $l=pL+r$ for some integer $p>0$ and remainder $r\in\{0,1,\ldots,L-1\}$. Then
		\begin{equation*}
			\begin{aligned}
				\limsup_{l\to \infty}\sup_{k\ge 1} \om_{k,k+l}\le 	\limsup_{l\to \infty}\sup_{k\ge 1} \uc_k\uc_{k+L}\cdots\uc_{k+(p-1)L}\le \limsup_{l\to \infty}\f1{2^p}=0.
			\end{aligned}\qedhere
		\end{equation*}
	\end{proof}

	\begin{remark}
		It was proved in \cite{kaenmaki2024regularity} that BNC suffices to imply $\lim_{l\to \infty}\sup_{k\ge 1} \om_{k,k+l}=0$ (referred to as the locally contracting condition).
	\end{remark}
	
	The following results are direct consequences of Proposition \ref{BC}.
	\begin{corollary}\label{c_*->BC}
		If $\mathcal{M}(J,\{n_k\},\{\ccc_k\})$ satisfies $c_*>0$, then it satisfies BBC.
	\end{corollary}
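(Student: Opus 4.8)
The plan is to deduce the corollary directly from Proposition \ref{BC}, which reduces the verification of BBC to the two uniform bounds $\sup_{k\ge 1} n_k<\infty$ and $\sup_{k\ge 1} b_k<\infty$, where $b_k=\min\{l:\om_{k,k+l}\le \uc_k\}$. So the whole task becomes to show that $c_*>0$ forces both the number of children and the number $b_k$ of levels needed to contract below $\uc_k$ to stay uniformly bounded.

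First I would bound the branching. Since $c_{k,j}\ge c_*$ for all $k,j$ and $\sum_{j=1}^{n_k}c_{k,j}^{\,d}\le 1$, we get $n_k\,c_*^{\,d}\le 1$, so $n_k\le c_*^{-d}$ uniformly in $k$, giving $\sup_{k\ge 1} n_k<\infty$. The key point for the second bound is that $c_*>0$ simultaneously produces a uniform gap \emph{below} $1$ for each contraction ratio: because $n_k\ge 2$, for any fixed index $j$ the remaining terms satisfy $\sum_{j'\ne j}c_{k,j'}^{\,d}\ge c_*^{\,d}$, so $c_{k,j}^{\,d}\le 1-c_*^{\,d}$ and hence $c_{k,j}\le \gamma:=(1-c_*^{\,d})^{1/d}<1$. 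Consequently every word $\bu\in D_{k,k+l}$ is a product of $l+1$ ratios each at most $\gamma$, whence $\om_{k,k+l}\le \gamma^{\,l+1}$.

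Finally, since $\uc_k\ge c_*$ for every $k$, the scale-independent inequality $\gamma^{\,l+1}\le c_*$ already forces $\om_{k,k+l}\le \uc_k$; as the left-hand side does not depend on $k$, this yields $b_k\le \lceil \log c_*/\log\gamma\rceil$ uniformly, so $\sup_{k\ge 1} b_k<\infty$. Both hypotheses of Proposition \ref{BC} are then met, and BBC follows.

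There is no real obstacle here: the argument is routine once Proposition \ref{BC} is available. The only step that requires a moment's thought is extracting the uniform upper bound $\gamma<1$ on the ratios, which is precisely what converts the measure-type constraint $\sum_j c_{k,j}^{\,d}\le 1$ together with $n_k\ge 2$ into the exponential decay of $\om_{k,k+l}$ driving the bound on $b_k$.
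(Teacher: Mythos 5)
Your proof is correct and follows exactly the route the paper intends: the paper states this corollary as a ``direct consequence of Proposition \ref{BC}'', and your argument supplies precisely the routine details --- $n_k\le c_*^{-d}$ from $\sum_j c_{k,j}^d\le 1$, the uniform bound $c_{k,j}\le(1-c_*^d)^{1/d}<1$ (which the paper itself uses elsewhere, e.g.\ in the proofs of Propositions \ref{BC} and \ref{quasihomo}), and the resulting uniform bound on $b_k$. Nothing to correct.
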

	\begin{corollary}
		If $\mathcal{M}(J,\{n_k\},\{\ccc_k\})$ satisfies $\sup_{k\ge 1}\min\big\{l:\om_{k,k+l}\le \uc_k\big\}<\infty$ and $\inf_{k\ge 1}\oc_k>0$, then $c_*>0$.
	\end{corollary}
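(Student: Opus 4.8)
The plan is to bypass the BBC characterisation and argue directly from the product structure of the extremal diameters. The single fact I would establish first is that, for every $k\ge 1$ and $l\ge 0$,
\[
\om_{k,k+l}=\prod_{i=k}^{k+l}\oc_i,
\]
because $c_{\bu}=\prod_{i=k}^{k+l}c_{i,u_i}$ is a product of factors chosen independently across levels, so its maximum over $\bu\in D_{k,k+l}$ is attained by maximising each factor separately. In particular $\om_{k,k+l}$ is nonincreasing in $l$, so that $\om_{k,k+b_k}\le\uc_k$ holds by the very definition of $b_k:=\min\{l:\om_{k,k+l}\le\uc_k\}$, which is finite for every $k$ by hypothesis.

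Granting this, the rest is a uniform exponent bound. First I would set $L=\sup_{k\ge1}b_k<\infty$ and $c_0=\inf_{k\ge1}\oc_k>0$, noting $c_0<1$ since $n_k\ge2$ forces $\oc_k<1$ at every level. Fixing $k$ and combining the definition of $b_k$ with the product formula gives
\[
\uc_k\ \ge\ \om_{k,k+b_k}\ =\ \prod_{i=k}^{k+b_k}\oc_i\ \ge\ c_0^{\,b_k+1}\ \ge\ c_0^{\,L+1},
\]
where the last inequality uses $0<c_0<1$ together with $b_k\le L$. Since $c_{k,j}\ge\uc_k$ for every $j$, this yields $c_*=\inf_{k,j}c_{k,j}\ge c_0^{\,L+1}>0$, completing the proof. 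As a sanity check, $c_*>0$ then forces $n_k\le c_*^{-d}$, so $\sup_k n_k<\infty$ and BBC holds by Proposition \ref{BC}, in agreement with Corollary \ref{c_*->BC}.

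I do not expect a genuine obstacle here: the content is entirely in recognising the product identity for $\om_{k,k+l}$, after which the statement collapses to the observation that a product of at most $L+1$ factors, each bounded below by $c_0$, is bounded below by $c_0^{L+1}$ uniformly in $k$. The only points deserving a line of care are that $c_0<1$ (so that the exponent comparison $c_0^{\,b_k+1}\ge c_0^{\,L+1}$ runs in the correct direction) and the degenerate case $b_k=0$, which corresponds to a level with $\oc_k=\uc_k$ and is covered by the same chain.
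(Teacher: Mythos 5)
Your proof is correct and is essentially the paper's own argument: the paper proves this corollary in one line via the bound $c_*\ge\big(\inf_k\oc_k\big)^{\sup_k b_k}>0$, which is exactly the chain $\uc_k\ge\om_{k,k+b_k}=\prod_{i=k}^{k+b_k}\oc_i\ge c_0^{\,b_k+1}$ that you spell out (your exponent $L+1$ is in fact the more careful count, since $\om_{k,k+b_k}$ has $b_k+1$ factors). The extra details you supply --- the product identity for $\om_{k,k+l}$, the observation $c_0<1$, and the sanity check against Corollary \ref{c_*->BC} --- are all sound.
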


Finally, we provide several sufficient and necessary conditions for the quasi-normality and normality of a Moran structure.
	
	\begin{proposition}\label{quasihomo}
For the Moran structure $\mathcal{M}(J,\{n_k\},\{\ccc_k\})$, the following statements hold. 
		\begin{enumerate}[(i)]
			\item $\mathcal{M}(J,\{n_k\},\{\ccc_k\})$ is normal if one of the following  holds:
			\begin{enumerate}[(a)]
				\item $c_*>0$. 
				\item $\liminf_{k\to\infty}\f{\log \oc_k}{\log \om_k}>0$. 
				\item $\liminf_{k\to\infty}\f{\log \uc_k}{\log \om_k}>0$ and BBC holds. 
			\end{enumerate}
			
			\item $\mathcal{M}(J,\{n_k\},\{\ccc_k\})$ is not quasi-normal if one of the following  holds:
			\begin{enumerate}[(a)]
				\item $\lim_{l\to\infty}\sup_k \om_{k,k+l}=1$ and $  \ \liminf_{k\to\infty}\f{\log \uc_k}{\log \om_k}>0$.    
				\item $\lim_{k\to\infty} \om_k=0,\ \limsup_{l\to\infty}\sup_k \om_{k,k+l}>0$ and $ \liminf_{k\to\infty}\f{\log \uc_k}{\log \om_k}>0$.      
				\item $0<\inf_k \oc_k\le\sup_k \oc_k<1$ and $ \limsup_{k\to\infty}\f{\log \uc_k}{\log \om_k}>0$.
			\end{enumerate} 
		\end{enumerate}
	\end{proposition}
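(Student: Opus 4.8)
The plan is to pass to logarithmic coordinates, where every quantity linearizes. Since maxima and minima decouple across coordinates, $\om_{p,q}=\prod_{i=p}^{q}\oc_i$ and $\um_{p,q}=\prod_{i=p}^{q}\uc_i$; writing $\alpha_i=-\log\oc_i>0$, $\beta_i=-\log\uc_i\ge\alpha_i$, and $A_k=-\log\om_k=\sum_{i=1}^{k}\alpha_i$, the two ratios in the definitions become
\[
x_{k,l}:=\frac{\log\um_{k+1,k+l}}{\log\om_k}=\frac{1}{A_k}\sum_{i=k+1}^{k+l}\beta_i,\qquad y_{k,l}:=\frac{\log\om_{k+1,k+l}}{\log\om_k}=\frac{1}{A_k}\sum_{i=k+1}^{k+l}\alpha_i,
\]
with $x_{k,l}\ge y_{k,l}>0$. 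I would first record two elementary principles. \emph{Normality criterion:} if $\inf_{k\ge 1}y_{k,l}\to\infty$ as $l\to\infty$, then the structure is normal, using any bounded strictly increasing $\varphi$ (e.g. $\varphi(t)=t/(1+t)$): the numerators in both defining expressions are then bounded while both denominators are $\ge\inf_k y_{k,l}\to\infty$. \emph{Obstruction principle:} if there is a sequence $(k_j,l_j)$ with $l_j\to\infty$, $x_{k_j,l_j}\ge c_0$ for a fixed $c_0>0$, and $y_{k_j,l_j}\to 0$, then the structure is \emph{not} quasi-normal, since for every strictly increasing $\varphi$ the ratio is at least $\varphi(c_0)/y_{k_j,l_j}\to\infty$.

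For part (i) I would treat (a) separately from (b),(c). Under $c_*>0$ all $\alpha_i,\beta_i$ lie in a fixed interval $[a,b]\subset(0,\infty)$, so both $x_{k,l}/y_{k,l}$ and the normality ratio are bounded by explicit constants (of the form $b/a$ and $2b^2/a^2$) with the linear choice $\varphi(t)=t$; here $\inf_k y_{k,l}\not\to\infty$, so this $\varphi$ is genuinely needed. For (b) and (c) I would instead verify the normality criterion by proving geometric growth of $A_k$. In case (b), $\liminf_k\alpha_k/A_k>0$ gives $\alpha_k\ge\kappa A_k$ for large $k$, whence $A_k\ge(1-\kappa)^{-1}A_{k-1}$ and $y_{k,l}\ge(1-\kappa)^{-l}-1\to\infty$ for large $k$. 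In case (c), Proposition \ref{BC} turns BBC into $L:=\sup_k b_k<\infty$, so $\sum_{i=k}^{k+L}\alpha_i\ge\beta_k\ge\kappa' A_k$ for large $k$ by $\liminf_k\beta_k/A_k>0$; this yields block-geometric growth $A_{k+L+1}\ge(1+\kappa')A_k$ and again $\inf_k y_{k,l}\to\infty$.

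For part (ii) I would produce the bad sequences required by the obstruction principle. In each case the hypothesis $\liminf_k\beta_k/A_k>0$ (resp. $\limsup$ in (c)) yields $x_{k,l}\ge\beta_{k+1}/A_k\ge\kappa$ at the relevant indices, so the lower bound $c_0=\kappa$ is automatic; the work is to force $y_{k,l}\to 0$. For (a), $\sup_k\om_{k,k+l}\to 1$ together with the trivial bound $\om_{k,k+l}\le\oc_k<1$ for fixed $k$ forces the near-maximizers $k_l$ to escape to infinity, and then $y_{k_l-1,l+1}=\log\om_{k_l,k_l+l}/\log\om_{k_l-1}\to 0$, since the numerator tends to $0$ while the denominator stays $\le\log\oc_1<0$. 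Case (b) runs identically along the subsequence on which $\sup_k\om_{k,k+l}$ stays above some $\eps_0>0$: here $\om_k\to 0$ gives $A_{k_l-1}\to\infty$, so the bounded numerator $\log\om_{k_l,k_l+l}\ge\log\eps_0$ still produces $y\to 0$. Case (c) is cleanest: boundedness of $\oc_k$ forces $A_k\asymp k$, so for \emph{every} fixed $l$ and every $k$ along the $\limsup$-subsequence, $y_{k-1,l}\le lb/((k-1)a)\to 0$ while $x_{k-1,l}\ge\kappa$, making the inner supremum infinite for each $l$.

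The main obstacle I anticipate is the ``escape to infinity'' of the maximizing indices in (ii)(a)--(b): one must rule out that $\sup_k\om_{k,k+l}$ is nearly attained at small $k$ — this is exactly where the strict bound $\om_{k,k+l}\le\oc_k$ in (a) and the hypothesis $\om_k\to 0$ in (b) enter — and one must simultaneously ensure the chosen index satisfies $\beta_k\ge\kappa A_k$, which is why (a),(b) require the full $\liminf$ whereas (c), having free choice of $k$, needs only $\limsup$. A secondary technical point is the bookkeeping for the finitely many small $k$ when upgrading the geometric growth of $A_k$ in (i)(b)--(c) to the uniform statement $\inf_{k\ge 1}y_{k,l}\to\infty$.
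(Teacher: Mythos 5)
Your proposal is correct and follows essentially the same route as the paper's proof: the linear choice $\varphi(t)=t$ with the explicit $c_*$-bounds in (i)(a); a bounded increasing $\varphi$ plus a lower bound on $\frac{\log \om_{k+1,k+l}}{\log \om_{k}}$ in (i)(b)--(c) (the paper gets this lower bound in one line from $\om_{k+1,k+l}\le \oc_{k+1}$, resp.\ $\om_{k+1,k+l}<\uc_{k+1}$ for $l$ large via Proposition \ref{BC}, rather than through your geometric growth of $-\log\om_k$, which is stronger than needed); and in (ii) exactly your obstruction principle, with the same escape-to-infinity argument for the near-maximizing indices that the paper uses in case (b). One point in your favour: in (ii)(c) you work along the $\limsup$-subsequence of indices where $\frac{\log \uc_k}{\log \om_k}\ge\kappa$, whereas the paper's write-up invokes a global infimum $\alpha=\inf_{k,l\ge 1}\frac{\log \um_{k+1,k+l}}{\log \om_k}>0$ whose positivity is justified only by the $\liminf$ hypothesis of cases (a)--(b) and is not available under the hypothesis of (c), so your version is the argument the paper actually needs there.
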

	\begin{proof}
		(i) (a) Taking $\varphi(x)=x$,  we obtain
		\begin{align*}
			\varphi\Big(\frac{\log \um_{k+1,k+l+1}}{\log \om_{k-1}}\Big)\Big/\frac{\log \om_{k+1,k+l}}{\log \om_{k}}&=\f{\log \um_{k+1,k+l+1}}{\log \om_{k+1,k+l}}\cdot \f{\log \om_k}{\log \om_{k-1}}\\
			&\le \f{d(l+1)\log c_*}{l\log (1-c_*^d)}\cdot \f{dk\log c_*}{(k-1)\log (1-c_*^d)}\\
			&\le 4 \Big(\f{d\log c_*}{\log (1-c_*^d)}\Big)^2
		\end{align*}
		for each $k\ge 2$ and $l$. Hence, $\mathcal{M}(J,\{n_k\},\{\ccc_k\})$ is normal. 
		
		(b)  Let $\varphi:(0,\infty)\to (0,\infty)$ be a strictly increasing function such that $\varphi(x)<N$ for some $N>0$. Since $\liminf_{k\to\infty}\f{\log \oc_k}{\log \um_k}>0$,  it follows that
		\[
		\sup_{k\ge 2}\varphi\Big(\frac{\log \um_{k+1,k+l+1}}{\log \om_{k-1}}\Big)\Big/\frac{\log \om_{k+1,k+l}}{\log \om_{k}}\le N\sup_{k\ge 2}\frac{\log \om_{k+1}}{\log \oc_{k+1}}<\infty
		\]
		for all $l$.
		Thus $\mathcal{M}(J,\{n_k\},\{\ccc_k\})$ is normal.
		
		(c) By Proposition \ref{BC}, there exists an integer $L$ such that $	\om_{k,k+l}<\uc_k $	for all $k$ and $l>L$. Let $\varphi:(0,\infty)\to (0,\infty)$ be a strictly increasing function such that $\varphi(x)<N'$ for some $N'>0$. Since $\liminf_{k\to\infty}\f{\log \uc_k}{\log \om_k}>0$ for any $l>L$, it follows that
		\begin{align*}
			\sup_{k\ge 2}\varphi\Big(\frac{\log \um_{k+1,k+l+1}}{\log \om_{k-1}}\Big)\Big/\frac{\log \om_{k+1,k+l}}{\log \om_{k}}\le N'\sup_{k\ge 2}\frac{\log \om_{k+1}}{\log \uc_{k+1}}<\infty.
		\end{align*}
		Thus $\mathcal{M}(J,\{n_k\},\{\ccc_k\})$ is normal.

		(ii) Let $\varphi:(0,\infty)\to (0,\infty)$ be strictly increasing. Since $\liminf_{k\to\infty}\frac{\log \uc_k}{\log \om_k}>0$, it follows that $\inf_k \frac{\log \uc_k}{\log \om_k}>0$. Furthermore, define
        \[
		\alpha\coloneq\inf_{k,l\ge 1}\f{\log \um_{k+1,k+l}}{\log \om_k}\ge \inf_{k\ge 1}\f{\log \uc_{k+1}}{\log \om_{k+1}}>0.
		\]
	
		(a) There exist sequences $\{l_n\},\{k_n\}$ with $\lim_{n\to\infty}l_n =\infty$  such that
		\[
		\lim_{n\to\infty}\om_{k_n,k_n+l_n}=\lim_{l\to\infty}\sup_k \om_{k,k+l}=1.
		\]
		Since $\varphi$ is strictly increasing, we obtain
		\begin{align*}
			\varphi\left(\frac{\log \um_{k_n+1,k+l_n}}{\log \om_{k_n}}\right)\bigg/\frac{\log \om_{k_n+1,k_n+l_n}}{\log \om_{k_n}}&\ge \f{\log \om_{k_n}}{\log \om_{k_n,k_n+l_n}}\varphi\left(\alpha\right)
			>\f{\log \oc_1}{\log \om_{k_n,k_n+l_n}}\varphi\left(\alpha\right).  
		\end{align*}
		Thus $\mathcal{M}(J,\{n_k\},\{\ccc_k\})$ is not quasi-normal.
		
		(b) Write $\beta= \limsup_{l\to\infty}\sup_k \om_{k,k+l}>0$. 
		There exist sequences $\{l'_n\},\{k'_n\}$ with $\lim_{n\to\infty} l'_n=\infty$  such that $	\om_{k'_n,k'_n+l'_n}>\frac{\beta}2 $ for all $n>0$. 
		
		We claim that $\lim_{n\to\infty} k'_n=\infty $. Otherwise, there exist  $\{k'_{n_p}\}$  and  $K>0$ such that
		\[
		\om_{k'_{n_p},k'_{n_p}+l'_{n_p}}\le\max_{1\le k'\le K} \om_{k',k'+l'_{n_p}}\to 0\ (p\to\infty).
		\]	
which contradicts the fact $\om_{k'_n,k'_n+l'_n}>\frac{\beta}2 $ for all $n>0$. 		
		Therefore,  we obtain
		\begin{align*}
			\varphi\left(\frac{\log \um_{k'_n+1,k'_n+l'_n}}{\log \om_{k_n}}\right)\bigg/\frac{\log \om_{k'_n+1,k'_n+l'_n}}{\log \om_{k'_n}}&\ge\f{\log \om_{k'_n}}{\log \om_{k'_n,k'_n+l'_n}}\varphi\left(\alpha\right) 
			>\f{\log \om_{k'_n}}{\log \frac{\beta}2}\varphi\left(\alpha\right).
		\end{align*}
		Thus $\mathcal{M}(J,\{n_k\},\{\ccc_k\})$ is not quasi-normal.
		
		(c) Let $A=\inf_k \oc_k$ and $B=\sup_k \oc_k$. Then for every integer $l>0$, we obtain
	\begin{align*}
			\sup_{k\ge 1}\varphi\left(\frac{\log \um_{k+1,k+l}}{\log \om_{k}}\right)\bigg/\frac{\log \om_{k+1,k+l}}{\log \um_{k}}\ge \sup_{k\ge  1}\f {k\log A}{l\log B}\varphi\left(\alpha\right)=\infty,
		\end{align*}
		and $\mathcal{M}(J,\{n_k\},\{\ccc_k\})$ is not quasi-normal.
	\end{proof}

	\section{General lower bounds of quasi-Assouad dimensions}\label{sec_glb}
	 In this section, we show that $t_*$, defined in \eqref{def_t*tt*}, provides a lower bound for the quasi-Assouad dimension of general Moran sets.  First, we recall three well-known facts, and refer the reader to \cite{CM, Wen2000}, \cite[Lemma 9.2]{falconer2013fractal}, and \cite[Proposition 2.2]{hua2000structures} for their proofs.
	\begin{lemma}\label{compensate}
	Let $\tilde{D}\subset D^*$ be such that $\{J_{\bu}:\bu\in\tilde{D}\}$ forms a finite, non-overlapping covering of the Moran set $E$. Then for any $s>0$, there exist $k_1,k_2$ with $\min_{\bu\in\tilde{D}}|\bu|\leq k_1,k_2\leq \max_{\bu\in\tilde{D}}|\bu|$ such that
		\[
		\sum_{\bu\in{D}_{k_1}}c^{s}_{\bu}\leq\sum_{\bu\in\tilde{D}}c^{s}_{\bu}\leq\sum_{\bu\in{D}_{k_2}}c^{s}_{\bu}.
		\]
	\end{lemma}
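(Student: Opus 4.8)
The plan is to recast the claim as the statement that $T := \sum_{\bu\in\tilde D} c_\bu^{\,s}$ is a weighted average of the full-level sums $S_k := \sum_{\bu\in D_k} c_\bu^{\,s}$ over the range $m\le k\le M$, where $m=\min_{\bu\in\tilde D}|\bu|$ and $M=\max_{\bu\in\tilde D}|\bu|$; once this is established, one simply takes $k_1,k_2\in[m,M]$ to be levels attaining $\min_k S_k$ and $\max_k S_k$. First I would record the multiplicativity that drives everything: writing $A_i=\sum_{j=1}^{n_i}c_{i,j}^{\,s}$, the definition $c_\bu=\prod_{i=1}^{|\bu|} c_{i,u_i}$ gives $S_k=\prod_{i=1}^k A_i$, and more generally $\sum_{\jjj\in D_{j+1,M}} c_\jjj^{\,s}=\prod_{i=j+1}^M A_i = S_M/S_j$ for $j\le M$ (all factors positive). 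The structural input I would use is that a finite non-overlapping covering $\{J_\bu:\bu\in\tilde D\}$ of $E$ forces $\tilde D$ to be a complete antichain of $D^*$: disjoint interiors rule out comparable words, since $J_\bu\subseteq J_\bw$ has nonempty interior whenever $\bw$ is a prefix of $\bu$, while the covering of $E$ guarantees that every word of length $M$ has exactly one prefix lying in $\tilde D$.

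With this in hand, the core is a single counting identity. Fully expanding every $\bu\in\tilde D$ down to level $M$ partitions $D_M$, so
\[
S_M=\sum_{\bu\in D_M} c_\bu^{\,s}=\sum_{\bu\in\tilde D}\ \sum_{\jjj\in D_{|\bu|+1,M}} c_{\bu*\jjj}^{\,s}=\sum_{\bu\in\tilde D} c_\bu^{\,s}\,\frac{S_M}{S_{|\bu|}}.
\]
Dividing by $S_M>0$ yields $\sum_{\bu\in\tilde D} w_\bu=1$ with $w_\bu:=c_\bu^{\,s}/S_{|\bu|}\ge 0$. Since $T=\sum_{\bu\in\tilde D} w_\bu S_{|\bu|}$, the quantity $T$ is a genuine convex combination of the values $\{S_{|\bu|}:\bu\in\tilde D\}$, whence
\[
\min_{m\le k\le M} S_k\ \le\ T\ \le\ \max_{m\le k\le M} S_k.
\]
Choosing $k_1\in[m,M]$ with $S_{k_1}=\min_{m\le k\le M} S_k$ and $k_2\in[m,M]$ with $S_{k_2}=\max_{m\le k\le M} S_k$ then gives exactly $\sum_{\bu\in D_{k_1}} c_\bu^{\,s}\le \sum_{\bu\in\tilde D} c_\bu^{\,s}\le \sum_{\bu\in D_{k_2}} c_\bu^{\,s}$, as required.

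I expect the only genuine obstacle to be the structural claim that a non-overlapping covering is a \emph{complete} antichain, specifically the completeness (every length-$M$ word has a prefix in $\tilde D$). The delicate point is that distinct basic sets at the same level may share boundary, so a point of $E$ lying in $J_\bv$ could a priori be captured only through a boundary contact with some incomparable $J_\bw$. To rule this out I would argue at the level of the symbolic tree rather than of points, using that the coverings occurring in this paper (for instance the stopping families $D(\de)$) are complete antichains by construction, or else invoke that $E\cap \inter(J_\bv)\neq\emptyset$, so an interior point of $J_\bv$ must be covered by some $J_\bw$ with $\bw$ comparable to $\bv$. Everything past this reduction is the elementary averaging identity above, so no metric estimates are needed.
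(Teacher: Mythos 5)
Your core argument --- the identity $\sum_{\bu\in\tilde D}c_\bu^{\,s}/S_{|\bu|}=1$ with $S_k=\prod_{i=1}^k\sum_{j=1}^{n_i}c_{i,j}^{\,s}$, which exhibits $\sum_{\bu\in\tilde D}c_\bu^{\,s}$ as a convex combination of the level sums and then yields the two-sided bound by taking the levels realizing the minimum and maximum of $S_k$ on $[m,M]$ --- is correct for a finite \emph{complete} antichain $\tilde D$, and it is the standard proof of this fact. The paper itself gives no proof: Lemma \ref{compensate} is quoted as a known fact with proofs deferred to \cite{CM,Wen2000}, so there is no internal argument to compare against. The antichain property does follow from non-overlapping exactly as you argue.

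The genuine gap is the completeness step, and it is worse than ``delicate'': your fallback claim that $E\cap\inter(J_\bv)\neq\emptyset$ for every $\bv\in D^*$ is false in $\R^d$ for $d\ge2$, and with it the lemma as literally stated fails. Take $J=[0,1]^2$, $n_k\equiv2$, $c_{k,j}\equiv\frac12$, $J_1=[0,\frac12]^2$, $J_2=[\frac12,1]\times[0,\frac12]$, and for each word $\bu$ let the two children of $J_\bu$ be the two half-size squares stacked vertically against the right edge of $J_\bu$ when $\bu$ begins with $1$, and against the left edge when $\bu$ begins with $2$. The MSC holds, the $k$th-level union is $[\frac12-2^{-k},\frac12+2^{-k}]\times[0,\frac12]$, so $E=\{\frac12\}\times[0,\frac12]$. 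Hence $\tilde D=\{2\}$ is a finite non-overlapping covering of $E$ with $m=M=1$, yet $\sum_{\bu\in D_1}c_\bu^{\,s}=2^{1-s}>2^{-s}=\sum_{\bu\in\tilde D}c_\bu^{\,s}$, so no admissible $k_1$ exists; the same example has $E\cap\inter(J_1)=\emptyset$ although $E\cap J_1\neq\emptyset$. (In $d=1$ your interior-point argument can be completed, since a child interval, being strictly shorter than its parent, contains at most one endpoint of the parent, and iterating twice forces two siblings with overlapping interiors; but the paper works in $\R^d$.) Consequently, completeness cannot be extracted from the geometric covering hypothesis: the lemma must be read symbolically, with $\tilde D$ a cut set --- every sufficiently long word has a (then unique) prefix in $\tilde D$ --- which is your first suggested patch and is what the paper actually relies on. In the two places where the lemma is invoked, the families $Q$ of Theorem \ref{xiaoyu} and $\mathcal{S}_\iii'(\de)$ of Theorem \ref{xiaoyu2} are complete antichains by construction, because \eqref{club} forces $\om_k\to0$ (as $\uc_k\le 2^{-1/d}$), and BBC does the same via Corollary \ref{contr}, so the stopping families $D_\iii(\de)$ exhaust every branch. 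So state and prove the lemma for cut sets, where your averaging argument is complete, and verify completeness at each application rather than deriving it from the covering hypothesis.
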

	
\begin{lemma}\label{numberofball}
	Let $\{V_i\}$ be a collection of disjoint open subsets of $\R^n$ such that each $V_i$ contains a
	ball of radius $a_1r$ and is contained in a ball of radius $a_2r$. Then any ball $B$ of radius
	$r$ intersects at most $(1+2a_2)^na_1^{-n}$ of the closures $\overline{V}_i$.
\end{lemma}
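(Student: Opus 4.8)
The plan is to prove this by a standard volume-packing argument. The essential point is that every $V_i$ whose closure meets $B$ has its inscribed small ball sitting inside a fixed enlargement of $B$; since the $V_i$ are disjoint, so are these inscribed balls, and hence their number is controlled by a ratio of volumes.

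First I would set up notation: write $B=B(x,r)$ and, for each index $i$, use the hypothesis to fix balls $B(p_i,a_1r)\subset V_i$ and $B(y_i,a_2r)\supset V_i$. Because the sets $V_i$ are pairwise disjoint, the inscribed balls $B(p_i,a_1r)$ are pairwise disjoint as well. The whole argument then reduces to packing these disjoint balls into a single ball centered at $x$.

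Next I would establish the key containment: if $\overline{V}_i\cap B\neq\emptyset$, then $B(p_i,a_1r)\subset B\bigl(x,(1+2a_2)r\bigr)$. To see this, pick $z\in\overline{V}_i\cap B$, so that $|z-x|\le r$. Since $\overline{V}_i\subset\overline{B(y_i,a_2r)}$ has diameter at most $2a_2r$, any point $w\in B(p_i,a_1r)\subset\overline{V}_i$ satisfies $|w-z|\le 2a_2r$, and the triangle inequality gives $|w-x|\le|w-z|+|z-x|\le(1+2a_2)r$, which is exactly the claimed containment. Finally, letting $M$ denote the number of indices with $\overline{V}_i\cap B\neq\emptyset$ and writing $\omega_n$ for the volume of the unit ball in $\R^n$, the $M$ disjoint balls $B(p_i,a_1r)$ all lie inside $B\bigl(x,(1+2a_2)r\bigr)$, so comparing Lebesgue measures yields
\[
M\,\omega_n(a_1r)^n\le\omega_n\bigl((1+2a_2)r\bigr)^n,
\]
and cancelling $\omega_n r^n$ gives $M\le(1+2a_2)^n a_1^{-n}$, as required.

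The argument is essentially routine, so there is no serious obstacle; the only point requiring a little care is identifying the correct enlargement factor so that the inscribed balls genuinely lie in $B\bigl(x,(1+2a_2)r\bigr)$. This is precisely where the constant $1+2a_2$ appearing in the statement originates (from $r$ for reaching $\overline{V}_i$ plus $2a_2r$ for traversing its diameter), and getting this constant right is the whole content of the geometric estimate.
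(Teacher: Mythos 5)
Your proof is correct and is precisely the standard volume-packing argument: the paper itself does not prove this lemma but cites \cite[Lemma 9.2]{falconer2013fractal}, whose proof is exactly your argument (the disjoint inscribed balls of radius $a_1r$ from sets whose closures meet $B$ all pack into the concentric ball of radius $(1+2a_2)r$, and comparing Lebesgue measures gives the bound). The only cosmetic point is that your containment lands in the closed ball $\overline{B}\bigl(x,(1+2a_2)r\bigr)$ rather than the open one, which is immaterial for the volume comparison.
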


\begin{proposition}\label{lowerboundofbox}
	For any $E\in \mathcal{M}(J,\{n_k\},\{\ccc_k\})$, we have
	\[
	\odimb E\ge \limsup_{k\to\infty} s_{1,k}.
	\]
\end{proposition}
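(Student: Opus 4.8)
The plan is to prove the equivalent statement that $\odimb E\ge s$ for every $s<\beta:=\limsup_{k\to\infty}s_{1,k}$ and then let $s\uparrow\beta$. Since $\odimb E=\limsup_{\delta\to0}\frac{\log N_\delta(E)}{-\log\delta}$, it suffices to exhibit scales $\delta_n\downarrow0$ and a constant $C$ with $N_{\delta_n}(E)\ge C^{-1}\delta_n^{-s}$. The starting point is the multiplicativity $\sum_{\bu\in D_k}c_\bu^{t}=\prod_{i=1}^k\sum_{j=1}^{n_i}c_{i,j}^{t}$, which by \eqref{def_skl} equals $1$ exactly at $t=s_{1,k}$ and is strictly decreasing in $t$; hence $s<\beta$ produces a subsequence $\{k_n\}$ along which $\Sigma_{k_n}:=\sum_{\bu\in D_{k_n}}c_\bu^{s}>1$.

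The mechanism for turning $\Sigma_{k_n}>1$ into a covering lower bound is a single-scale mass distribution argument. Given a non-overlapping cover $\{J_\bu:\bu\in\tilde D\}$ of $E$, I place on $E$ the $s$-weighted probability measure assigning mass $c_\bu^{s}\big/\sum_{\bu'\in\tilde D}c_{\bu'}^{s}$ to each $J_\bu$ and spreading this mass inside $J_\bu$ by the uniform Moran measure. For any probability measure $\mu$ and any cover of $E$ by balls of radius $\delta$ one has $1=\mu(E)\le N_\delta(E)\,\sup_{x}\mu\big(B(x,2\delta)\big)$, so the whole problem reduces to the single-scale Frostman estimate $\sup_x\mu(B(x,\delta))\le C\delta^{s}$ at a suitably chosen $\delta=\delta_n$. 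The geometric input is that each $J_\bu$ is a similar copy of $[0,1]^d$ of side $c_\bu$, hence contains a ball of radius $c_\bu/2$ and sits inside a ball of radius $\tfrac{\sqrt d}{2}c_\bu$, and basic sets of a common generation have disjoint interiors; for cells of comparable diameter the packing Lemma \ref{numberofball} bounds by a dimensional constant the number meeting a fixed ball, and summing the $s$-weights over dyadic size-classes gives a geometric series dominated by its largest class.

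The essential difficulty, and the reason the proposition is not immediate, is that when $c_*=0$ the ratio $\om_k/\um_k$ may be unbounded, so the cells of a single generation $D_{k_n}$ are wildly non-uniform in size: a cluster of very small generation-$k_n$ cells can fit inside one $\delta$-ball and inflate $\mu(B(x,\delta))$ far beyond $C\delta^s$, wrecking the naive single-generation estimate. To circumvent this I would not cover by a pure generation but by the stopping family $D(\delta)=\{\jjj:c_\jjj\le\delta<c_{\jjj^-}\}$, every cell of which has diameter at most $\sqrt d\,\delta$. Because $\{J_\jjj:\jjj\in D(\delta)\}$ is a non-overlapping cover of $E$, Lemma \ref{compensate} compares its $s$-sum $\sum_{\jjj\in D(\delta)}c_\jjj^{s}$ with the pure-generation sums that bracket the word-lengths occurring in $D(\delta)$; choosing $\delta_n$ comparable to the generation-$k_n$ scale then transfers the inequality $\Sigma_{k_n}>1$ into a usable lower bound for the stopping cover while keeping every cell below the single working scale $\delta_n$.

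With this cover in hand, the Frostman estimate is organized around the observation that every $\jjj\in D(\delta)$ much smaller than $\delta$ has a parent $\jjj^-$ of diameter exceeding $\delta$; grouping such cells by their outermost ancestor of size greater than $\delta$ and noting—again by disjointness of interiors together with a volume comparison—that only boundedly many such ancestors can meet a fixed ball $B(x,\delta)$, one controls the $s$-mass absorbed by $B(x,\delta)$ in terms of $\delta^{s}$ times the contributions of the self-similar Moran copies inside those ancestors. The main obstacle I expect is precisely this clustering control, namely making $\sup_x\mu(B(x,\delta))\le C\delta^{s}$ uniform across the non-uniform stopping cover; this is where the interplay of Lemma \ref{numberofball} (to count comparable-size cells), the self-similarity of the Moran construction, and Lemma \ref{compensate} (to keep the effective exponent anchored at $\limsup_k s_{1,k}$ along the scales $\delta_n$) must be combined.
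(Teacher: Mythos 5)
Your high-level framework (single-scale mass distribution principle) is workable, but the two steps that carry all the weight both have genuine gaps, and they are not repairable in the form you propose. \textbf{Gap 1 (the transfer via Lemma \ref{compensate}).} For a lower bound on $\sum_{\jjj\in D(\de)}c_{\jjj}^s$, Lemma \ref{compensate} only yields \emph{some} generation $k_1$, lying between the minimal and maximal word lengths occurring in $D(\de)$, with $\sum_{\bu\in D_{k_1}}c_{\bu}^s\le \sum_{\jjj\in D(\de)}c_{\jjj}^s$. You know $\sum_{\bu\in D_k}c_{\bu}^s>1$ only along the subsequence $\{k_n\}$; at intermediate generations this sum can be far below $1$, and you cannot force $k_1=k_n$. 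Moreover, when $c_*=0$ the word lengths in $D(\de)$ cannot be confined near $k_n$ by any choice of $\de$: having $D(\de)=D_{k_n}$ would require $\om_{k_n}\le\de<\um_{k_n-1}$, an empty range once the within-generation spread $\om_k/\um_k$ is large. \textbf{Gap 2 (the Frostman estimate is false in general).} Cells of $D(\de)$ satisfy only $c_{\jjj}\le\de<c_{\jjj^-}$ and may be far smaller than $\de$. Take a parent $\bv$ with $c_{\bv}=2\de$ whose next generation has $n_{|\bv|+1}=N$ children, all of ratio $N^{-1/d}$: all $N$ children lie in $D(\de)$, sit inside a ball of radius $2\sqrt{d}\,\de$, and carry total $s$-weight $2^s\de^s N^{1-s/d}\gg\de^s$ since $s<d$. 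More generally, the mass under a parent $\bv$ is $c_\bv^s\prod_{i}\sum_j c_{i,j}^s$-type quantities over the generations below $|\bv|$, which are unbounded precisely when those generations are rich -- i.e.\ precisely in the situation the proposition is about; the richness below scale $\de$ must be \emph{counted at its own scale}, not dominated at scale $\de$. Your dyadic bookkeeping sees this: the series $\sum_m \de^d 2^{m(d-s)}$ over classes of cells smaller than $\de$ has ratio $2^{d-s}>1$, so it is dominated by the \emph{smallest}-cell class, the uncontrolled one. Finally, ``boundedly many ancestors meet a fixed ball'' is not available as stated: the parents $\{\jjj^-:\jjj\in D(\de)\}$ need not be non-overlapping (they can be nested), and after pruning to an antichain as in Proposition \ref{BC->BNC}, bounding the number of stopping cells per pruned parent is exactly the BBC -- a hypothesis this proposition does not have.

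For comparison, the paper does not prove this proposition itself (it quotes \cite[Proposition 2.2]{hua2000structures}), but the relevant technique is reproduced inside the paper's proof of Theorem \ref{dayu}, and it avoids measures, stopping families and Lemma \ref{compensate} altogether. If $\om_k\not\to 0$, then $E$ contains a ball and $\odimb E=d\ge \limsup_k s_{1,k}$ trivially. Otherwise fix $s<\limsup_k s_{1,k}$, $\eps>0$, and $k$ with $s_{1,k}>s$, so $\sum_{\bu\in D_k}c_{\bu}^s>1$. Split the \emph{single} generation $D_k$ into dyadic classes $\mathcal{B}_m=\{\bu\in D_k:2^{-m-1}<c_{\bu}\le 2^{-m}\}$. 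Since $\sum_m \#\mathcal{B}_m 2^{-ms}>1=\sum_m 2^{-\eps m}(1-2^{-\eps})$, some class satisfies $\#\mathcal{B}_{m^*}\ge (1-2^{-\eps})2^{m^*(s-\eps)}$. Its cells have comparable diameters and disjoint interiors, so Lemma \ref{numberofball} shows that any ball of radius $2^{-m^*}$ meets at most a dimensional constant $C_d$ of them; each meets $E$, whence $N_{2^{-m^*}}(E)\ge C_d^{-1}(1-2^{-\eps})2^{m^*(s-\eps)}$, and $2^{-m^*}<2\om_k\to 0$, giving $\odimb E\ge s-\eps$. The idea missing from your attempt is exactly this: sacrifice $\eps$ in the exponent to locate, inside one good generation, a numerous subfamily of \emph{comparable-size} cells and count only those (equivalently, support the measure uniformly on that one class, where the single-scale Frostman bound is trivial), rather than trying to control a measure spread over the whole inhomogeneous stopping family -- a control which, as above, genuinely fails without $c_*>0$ or BBC.
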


	Next, we prove that  $t_*$ is always a lower bound for the quasi-Assouad dimension.
	
	\begin{theorem}\label{dayu}
For any $E\in \mathcal{M}(J,\{n_k\},\{\ccc_k\})$,  $\dimqa E\ge  t_*.$
	\end{theorem}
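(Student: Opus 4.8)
The plan is to exploit the monotonicity of the spectrum $h_E(\eta)$ and reduce the statement to a local covering estimate inside a single basic set. Since $R^{1+\eta_2}<R^{1+\eta_1}$ whenever $\eta_1<\eta_2$ and $R<1$, the admissible range of pairs $(r,R)$ shrinks as $\eta$ grows, so $h_E(\eta)$ is non-increasing in $\eta$ and $\dimqa E=\lim_{\eta\to0}h_E(\eta)=\sup_{\eta>0}h_E(\eta)$. Writing $t_*(\eta)=\limsup_{l\to\infty}\sup_{k\in\uk_{l,\eta}}s_{k+1,k+l}$, it therefore suffices to prove $\dimqa E\ge s$ for every fixed $\eta>0$ and every $s<t_*(\eta)$; taking the supremum over such $s$ and then over $\eta$ yields $\dimqa E\ge t_*$. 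Fixing such $\eta$ and $s$, the definition of $t_*(\eta)$ furnishes infinitely many $l$ together with indices $k_l\in\uk_{l,\eta}$ satisfying $s_{k_l+1,k_l+l}>s$.

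I would then split according to the behaviour of the witnessing depths $\{k_l\}$. If $\{k_l\}$ has a bounded subsequence, then some fixed $k$ recurs for infinitely many $l$, so $\limsup_{l}s_{k+1,k+l}\ge s$ for that fixed $k$ (this situation is automatic when $\om_\infty:=\lim_k\om_k>0$, since then $\om_{k+1,k+l}=\om_{k+l}/\om_k\ge\om_\infty/\om_k$ exceeds $\om_k^\eta$ once $\om_k^{1+\eta}<\om_\infty$, forcing $\uk_{l,\eta}$ into a bounded set independent of $l$). Applying Proposition \ref{lowerboundofbox} to the sub-Moran set $E\cap J_\bu$ with $\bu\in D_k$ — itself a Moran set built from the shifted data $\{n_{k+i}\}$, $\{\ccc_{k+i}\}$ — gives $\odimb(E\cap J_\bu)\ge\limsup_{l}s_{k+1,k+l}\ge s$, and monotonicity of the quasi-Assouad dimension under inclusion together with \eqref{dimHBAQ} yields $\dimqa E\ge\dimqa(E\cap J_\bu)\ge\odimb(E\cap J_\bu)\ge s$.

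The remaining case is $k_l\to\infty$, which forces $\om_{k_l}\to0$ (otherwise $\uk_{l,\eta}$ would be bounded, as above). Here I argue directly at small scales. For each witness $(k,l)=(k_l,l)$ choose $\bu\in D_k$ with $c_\bu=\om_k$, fix $x\in E\cap J_\bu$ so that $J_\bu\subseteq B(x,R)$, and set $R=\om_k\sqrt{d}$ and $r=\om_k\om_{k+1,k+l}\sqrt{d}$, the diameters of $J_\bu$ and of its largest $(k+l)$th level descendant. Since $\om_{k+1,k+l}=\om_{k+l}/\om_k$ and $k\in\uk_{l,\eta}$ means exactly $\om_{k+1,k+l}<\om_k^\eta$, we obtain $r/R=\om_{k+1,k+l}<\om_k^\eta=(R/\sqrt{d})^\eta\le R^{\eta}$, so $r<R^{1+\eta}$ and the triple $(x,r,R)$ is admissible for $h_E(\eta)$; moreover $R=\om_k\sqrt{d}\to0$ and $R/r=\om_{k+1,k+l}^{-1}>\om_k^{-\eta}\to\infty$. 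The core estimate to establish is the local lower bound $N_r(B(x,R)\cap E)\ge N_r(E\cap J_\bu)\ge c\,(R/r)^{s}$ with $c$ independent of $l$; granting it, the bound $N_r(B(x,R)\cap E)\le C(R/r)^\alpha$ cannot hold for any $\alpha<s$ and any $C$ along these scales $R\to0$, whence $h_E(\eta)\ge s$ and the proof concludes.

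The main difficulty is precisely this local covering estimate in the absence of $c_*>0$: the $(k+l)$th level descendants of $J_\bu$ have diameters ranging over the whole interval $[\om_k\um_{k+1,k+l}\sqrt{d},\,r]$, so a single ball of radius $r$ may meet many cells of wildly different sizes and one cannot bound the number of cells per ball by a constant. I would resolve this by adapting the proof of Proposition \ref{lowerboundofbox} to the finite window $[k+1,k+l]$: group the relevant descendants (or the cells of a stopping-time antichain at scale $r$) into dyadic bands by diameter, apply Lemma \ref{numberofball} inside each band — where the ratio of circum- to in-radius is bounded, so that the number of cells of that band meeting a fixed $r$-ball is controlled by a constant depending only on $d$, the band index cancelling in the estimate — and sum over the $O\!\big(\log(\om_{k+1,k+l}/\um_{k+1,k+l})\big)$ bands. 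Lemma \ref{compensate} then replaces the resulting irregular sums by the clean uniform-level quantity $\prod_{i=k+1}^{k+l}\sum_j c_{i,j}^{\,s_{k+1,k+l}}=1$, which through $\prod_{i=k+1}^{k+l}n_i\ge\om_{k+1,k+l}^{-s_{k+1,k+l}}$ delivers the power $(R/r)^{s}$; the strict inequality $s<s_{k+1,k+l}$ supplies the gap $\om_{k+1,k+l}^{-(s_{k+1,k+l}-s)}\to\infty$ needed to absorb the sub-exponential band and covering losses.
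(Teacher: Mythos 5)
Your overall architecture coincides with the paper's: reduce to showing $h_E(\eta)\ge s$ for each fixed $\eta>0$ and $s<t_*(\eta)$, split according to whether the witnessing depths stay bounded, and settle the bounded case via Proposition \ref{lowerboundofbox}. That part of your argument is correct; indeed your variant of the bounded case (passing to the sub-Moran set $E\cap J_\bu$, $\bu\in D_k$, built from the shifted data, and using monotonicity of $\dimqa$ under inclusion together with \eqref{dimHBAQ}) is a clean alternative to the paper's appeal to $\lim_{k}\om_k=0$, and your observation that $\lim_k\om_k>0$ forces the sets $\uk_{l,\eta}$ to be uniformly bounded is also right.

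The unbounded case, however, has a genuine gap, and it cannot be repaired along the lines you sketch, because with your choice of scale $r=\om_k\om_{k+1,k+l}\sqrt d$ (the \emph{largest} level-$(k+l)$ descendant diameter) the core estimate $N_r(E\cap J_\bu)\ge c(R/r)^s$ is false once $c_*=0$. Take $d=1$ and a block of $l$ identical levels with ratio vector $(\f12,b,\dots,b)$, $N$ copies of $b$, where $\gamma=1-2^{-1/2}$, $N=\lceil\gamma^2 2^{l-1}\rceil$ and $b=(\gamma/N)^2$; then $2^{-s}+Nb^s=1$ at $s=\f12$, so $s_{k+1,k+l}=\f12$, while $\om_{k+1,k+l}=2^{-l}$ and $Nb\le 2^{1-l}$. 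Inside every parent place the $\f12$-child at the left end and the $N$ small children packed contiguously beside it. For large $l$ one has $b<2^{-l}$, so every word containing a small letter lies in one of the cells $J_{A^iB_j}$ ($0\le i\le l-1$), and for each $i$ the $N$ cells $\{J_{A^iB_j}\}_j$ form one contiguous pack of total length $R2^{-i}Nb\le 2r$; together with the spine cell $J_{A^l}$ of diameter $r$ this gives $N_r(E\cap J_\bu)\le l+1$, whereas $(R/r)^s=2^{ls}$ grows exponentially in $l$ for any fixed $s<\f12$. Such blocks can be made to recur with their depths in $\uk_{l,\eta}$ (e.g.\ block lengths $2^n$), so the theorem genuinely requires $\dimqa E\ge\f12$ there, yet your scales cannot detect it. This is exactly the failure you flag as ``the main difficulty'': Lemma \ref{numberofball} gives a dimensional constant only when the disjoint cells have diameter comparable to the radius of the ball; for a band of cells of diameter $2^{-m}\ll r$ a single $r$-ball can meet on the order of $(r2^m)^d$ of them, the band index does not cancel, and no summation over bands can salvage an inequality that is false. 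The paper's proof avoids this by reversing the order of your choices: it first groups the words $\jjj\in D_{k+1,k+l}$ into dyadic bands $\mathcal{B}_{m,k}$, uses $\sum_m\#\mathcal{B}_{m,k}2^{-ms}\ge\prod_i\sum_jc_{i,j}^s>1$ (valid since $s<s_{k+1,k+l}$) plus a pigeonhole in $m$ to pick one heavy band $m_k'$, and only \emph{then} sets the small scale $r_k=\min_{\jjj\in\mathcal{B}_{m_k',k}}c_{\iii*\jjj}$, the smallest diameter inside that band; all counted cells then have diameter comparable to $r_k$, so each $r_k$-ball meets at most $12^d$ of them, and admissibility survives because $r_k/R_k\le\om_{k+1,k+l}<\om_k^\eta=R_k^\eta$. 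In short, the scale $r$ must be allowed to go (possibly much) deeper than $\om_{k+1,k+l}$, adapted to where the $s$-mass actually sits; once you make that change you are reproducing the paper's argument.
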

	\begin{proof}
		Without loss of generality, we assume that $\lim_{k\to\infty} \om_k = 0$, since otherwise $E$ has non-empty interior and $\dimqa E = d$, so the conclusion holds.
		
Fix $\eta>0$. For each $s < \limsup_{l \to \infty} \sup_{k \in \uk_{l,\eta}} s_{k+1,k+l}$, there exist two sequences $\{p_k\}$ and $\{q_k\}$ with $p_k \in \uk_{q_k,\eta}$ and $q_k \to \infty$, such that $s < s_{p_k+1,p_k+q_k}$ for all $k \ge 1$, and 
		\begin{gather*}
			\lim_{k\to\infty}s_{p_k+1,p_k+q_k}= \limsup_{l\to\infty}\sup_{k\in \uk_{l,\eta}} s_{k+1,k+l}.
		\end{gather*}
If $\{p_k\}$ has a bounded subsequence $\{p_{k_t}\}$, then, since $\lim_{k \to \infty} \om_k = 0$, there exists $M>0$ such that
		\[
		\lim_{t\to\infty} s_{p_{k_t}+1,p_{k_t}+q_{k_t}}\le  \limsup_{l\to\infty}\max_{1\le k\le M} s_{k,k+l}=\limsup_{l\to\infty} s_{1,l}.
		\]
		By \eqref{dimHBAQ} and Proposition \ref{lowerboundofbox}, we obtain
		\[
		\dimqa E\ge \overline{\text{dim}}_{\rm B}E\ge \limsup_{l\to\infty} s_{1,l}> s,
		\]
	and the result follows.

	 Otherwise, $\lim_{k \to \infty} p_k = +\infty$, and by \eqref{dimHBAQ} and \eqref{def_t*tt*}, it suffices to prove that
$$
h_F(\eta) \geq s. 
$$
Let $k > 0$ be an integer. For each $m \in \N$, we define
		\[
		\mathcal{B}_{m,k}=\left\{\jjj\in D_{p_k+1,p_k+q_k}:2^{-m-1}<c_{\jjj}\le 2^{-m}\right\},
		\]
		and set
		\[
		m_k=\min\left\{m:\mathcal{B}_{m,k}\neq \emptyset\right\}.
		\]
		Since $p_k \in \uk_{q_k,\eta}$, this implies that
\begin{equation}\label{M/M}
				2^{-m_k-1}<\max_{\jjj\in D_{p_k+1,p_k+q_k}}c_{\jjj}
				=\om_{p_k+1,p_k+q_k} <\om_{p_k}^\eta,
		\end{equation}
		and  $	\lim_{k\to\infty}m_k=\infty. $
		It follows that 
		$$
		\sum_{m=0}^{\infty}\#\mathcal{B}_{m,k}2^{-ms}\ge  \sum_{m=0}^{\infty}\sum_{\jjj\in\mathcal{B}_{m,k}}c_{\jjj}^s =\sum_{\jjj\in D_{p_k+1,p_k+q_k}}c_{\jjj}^s =\prod_{i=p_k+1}^{p_k+q_k}\sum_{j=1}^{n_i}c_{i,j}^s
		$$
		Since   $s<s_{p_k+1,p_k+q_k}$, we have that 
		\begin{equation} \label{numberofB}
			\sum_{m=0}^{\infty}\#\mathcal{B}_{m,k}2^{-ms} >1.
		\end{equation}
		
		Given $\eps > 0$, for each $k > 0$, there exists an integer $m_k' \ge m_k$ such that
		\begin{equation}\label{numberofB1}
			2^{-\eps m_k'}(1-2^{-\eps})\le \#\mathcal{B}_{m_k',k}2^{-m_k's}.
		\end{equation}
	Otherwise  there exists an integer $k>0$ such that $\#\mathcal{B}_{m,k}2^{-ms}<2^{-\eps m}(1-2^{-\eps})$ for all $ m\in \N$, and it follows that 
		\[
		\sum_{m=0}^{\infty}\#\mathcal{B}_{m,k}2^{-ms}<\sum_{m=0}^{\infty}2^{-\eps m}(1-2^{-\eps})=1,
		\]
		which 	contradicts \eqref{numberofB}.  Moreover $	\lim_{k\to\infty}m_k'=\infty. $
		
	    Choose $\iii\in D_{p_k}$ such that $c_\iii=\om_{p_k}$. Set
		\[
		R_k=c_{\iii},\quad r_k=\min_{\jjj\in \mathcal{B}_{m_k',k}}c_{\iii*\jjj}.
		\]
		By \eqref{M/M},  it is clear that
		\begin{align*}
			\frac{r_k}{R_k}&=\min_{\jjj\in \mathcal{B}_{m_k',k}}c_{\jjj} \le {\om_{p_k+1,p_k+q_k}} < \om_{p_{k}}^\eta = R_k^{\eta},
		\end{align*}
		and we have   $r_k< R_k^{1+\eta}$. For every $\jjj \in \mathcal{B}_{m_k',k}$, $\mbox{int}(J _{\iii*\jjj})$ contains a ball of    radius $2^{-m_k'-2} c_{\iii}$ (which is at least $\frac14 r_k$) and is contained in a ball of radius $2^{-m_k'-1} c_{\iii}$ (which is less than $r_k$).    Furthermore, the elements of the set $\{\mbox{int}(J _{\iii*\jjj}):\jjj \in \mathcal{B}_{m_k',k}\}$ are pairwise disjoint.  
		It then follows from Lemma \ref{numberofball} that
		\[
		\sup_{x}\#\left\{\jjj\in \mathcal{B}_{m_k',k}:B(x,r_k)\cap J_{\iii*\jjj}\neq \emptyset\right\}\le 12^d.
		\]
		
		For each integer $k\geq 1$, we write
		\[
		t_k=\sup_{x\in J_{\iii}\cap E}N_{r_k}\left(B(x,R_k)\cap E\right).
		\]
		Then for any $z\in J_{\iii}\cap E$, there exist $x_1,\ldots,x_{t_k}\in J_\iii\cap E$\ (depending on $z$) such that
		\[
		\bigcup_{\jjj\in \mathcal{B}_{m_k',k}}J_{\iii*\jjj}\subset J_{\iii}\subset B(z,R_k)\subset \bigcup_{i=1}^{t_k}B(x_i,r_k).
		\]
		Hence for each $\jjj\in \mathcal{B}_{m_k',k}$, there exists  $B(x_i,r_k)$ such that $J_{\iii*\jjj}\cap B(x_i,r_k)\neq \emptyset$. Therefore,
		\[
		\mathcal{B}_{m_k',k}=\bigcup_{i=1}^{t_k}\left\{\jjj\in \mathcal{B}_{m_k',k}:B(x_i,r_k)\cap J_{\iii*\jjj}\neq \emptyset\right\}, 
		\]
		and we have 
		\begin{align}\label{numberofB2}
			\#\mathcal{B}_{m_k',k}\le12^d t_k\le 12^d \sup_{x\in E}N_{r_k}\left(B(x,R_k)\cap E\right).
		\end{align}
		
		For each  $\de>0$, by \eqref{def_hED}, there exists $C_{\de}$ such that for all $k>0$,
		\[
		\sup_{x\in E}N_{r_k}\left(B(x,R_k)\cap E\right)\le C_{\de}\Big(\frac{R_k}{r_k}\Big)^{h_E(\eta)+\de}. 
		\]
		Combining \eqref{numberofB1} and \eqref{numberofB2}, we have that for all $k>0$, 
		\begin{align*}
			12^{-d}2^{(s-\eps) m_k'}(1-2^{-\eps})&\le 12^{-d}\#\mathcal{B}_{m_k',k} 
			\le C_{\de}\Big(\frac{R_k}{r_k}\Big)^{h_{E}(\eta)+\de} \le C_{\de}2^{(m_k'+1)(h_{E}(\eta)+\de)}.
		\end{align*}
Hence,  $	h_{E}(\eta)+\de\ge s-\eps, $ and we have $h_E(\eta)\ge s$ by the arbitrariness of  $\de, \eps$. 
	\end{proof}
	
	\section{quasi-Assouad dimension of Moran sets }\label{sec_qadms}
	
In this section, we derive bounds for the quasi-Assouad dimension of a Moran set subject to either \eqref{club} or the BBC. 
	 The first lemma shows that \eqref{club} implies a weaker form of the BNC.  
	\begin{lemma}\label{numberofcap}
Suppose  that $\mathcal{M}(J,\{n_k\},\{\ccc_k\})$ satisfies \eqref{club}.  Then, for all $E\in\mathcal{M}(J,\{n_k\},\{\ccc_k\})$,  the limit
		\[
		\lim_{\de\to 0}\frac{\log \sup_{x\in E}\#\{\bu\in D_{\iii}(\de):B(x,c_{\iii}\de)\cap J_{\bu}\neq \emptyset\}}{-\log c_{\iii}\de}=0
		\]
		holds uniformly in  $\iii\in D^*$. In particular, if $\iii$ is the empty word, we have 
	\begin{equation}\label{AWSC}
		\lim_{\de\to 0}\frac{\log \sup_{x\in E}\#\{\bu\in D(\de):B(x,\de)\cap J_{\bu}\neq \emptyset\}}{-\log\de}=0.
		\end{equation}
	\end{lemma}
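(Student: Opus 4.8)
The plan is to reduce the statement to a single uniform covering estimate and then combine a \emph{global} packing bound with the tail control supplied by \eqref{club}. First I would record that \eqref{club} forces $\om_k\to 0$: since $\sum_j c_{k,j}^d\le 1$ and $n_k\ge 2$ give $\uc_k\le 2^{-1/d}<1$, the numerator $-\log\uc_k$ stays bounded below by $\tfrac{\log 2}{d}$, so \eqref{club} is incompatible with $-\log\om_k$ remaining bounded; in particular every antichain $D_\iii(\de)$ is finite. Writing $p=|\iii|$ and using the exact identities $\um_{p+1,p+l}=\prod_{i=p+1}^{p+l}\uc_i$ and $\om_{p+1,p+l}=\prod_{i=p+1}^{p+l}\oc_i$, I set $L_1=\min\{l:\om_{p+1,p+l}\le\de\}$ and $L_0=\max\{l\ge 0:\um_{p+1,p+l}>\de\}$ (empty product $=1$), so that every word of $D_\iii(\de)$ has the form $\iii*\jjj$ with $L_0<|\jjj|\le L_1$. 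Since $-\log(c_\iii\de)\ge-\log\de\to\infty$ uniformly in $\iii$, it suffices to prove that $\log N(\iii,\de,x)=o(-\log(c_\iii\de))$ uniformly, where $N(\iii,\de,x)=\#\{\bu\in D_\iii(\de):J_\bu\cap B(x,c_\iii\de)\neq\emptyset\}$.

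The second step is a global packing estimate that deliberately avoids any generation-by-generation summation. The words in $D_\iii(\de)$ are pairwise incomparable, so by the Moran structure conditions the cubes $\{J_\bu:\bu\in D_\iii(\de)\}$ have pairwise disjoint interiors. For $\bu=\iii*\jjj\in D_\iii(\de)$ the cube $J_\bu$ has side $c_\iii c_\jjj$ with $\de\uc_{\min}<c_\jjj\le\de$, where $\uc_{\min}=\min_{L_0<l\le L_1}\uc_{p+l}$; hence $J_\bu$ contains a ball of radius $\tfrac12 c_\iii\de\uc_{\min}$ and is contained in a ball of radius $\tfrac{\sqrt d}{2}c_\iii\de$. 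Applying Lemma \ref{numberofball} with $r=c_\iii\de$, $a_1=\uc_{\min}/2$, $a_2=\sqrt d/2$ then yields, uniformly in $x$,
\[
N(\iii,\de,x)\le\big(2(1+\sqrt d)\big)^d\,\uc_{\min}^{-d}=:C_d\,\uc_{\min}^{-d}.
\]
The essential gain is that one packing bound captures all generations at once; the alternative of bounding each generation by $C_d\uc_{p+l}^{-d}$ and summing would introduce a factor counting the number of generations $L_1-L_0$, whose uniform control is precisely what I want to avoid.

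The third step converts $-\log\uc_{\min}$ into a small multiple of $-\log(c_\iii\de)$ through \eqref{club}. Fix $\eps\in(0,1)$ and choose $K_\eps$ with $-\log\uc_k\le\eps(-\log\om_k)$ for all $k\ge K_\eps$. Let $p+l^*$ attain $\uc_{\min}$, with $L_0<l^*\le L_1$. Because $\om_{p+1,p+l^*-1}\ge\om_{p+1,p+L_1-1}>\de$, I get $\om_{p+l^*-1}=\om_p\,\om_{p+1,p+l^*-1}\ge c_\iii\,\om_{p+1,p+l^*-1}>c_\iii\de$, so $-\log\om_{p+l^*-1}<-\log(c_\iii\de)$. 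Combining this with $-\log\om_{p+l^*}\le-\log\om_{p+l^*-1}-\log\uc_{p+l^*}$ and the choice of $K_\eps$ (when $p+l^*\ge K_\eps$) and rearranging gives
\[
-\log\uc_{\min}=-\log\uc_{p+l^*}<\tfrac{\eps}{1-\eps}\,(-\log(c_\iii\de)).
\]
When $p+l^*<K_\eps$, which can occur only for the finitely many shallow levels, $-\log\uc_{p+l^*}$ is bounded by the constant $C_\eps':=\max_{1\le k<K_\eps}(-\log\uc_k)$.

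Putting the pieces together, $\log N\le\log C_d+d\max\{\tfrac{\eps}{1-\eps}(-\log(c_\iii\de)),\,C_\eps'\}$; dividing by $-\log(c_\iii\de)\ge-\log\de$ and letting first $\de\to0$ and then $\eps\to0$ gives the uniform limit $0$, and the special case $\iii=\emptyset$ (so $c_\iii=1$) is exactly \eqref{AWSC}. The main obstacle is making the Step-three estimate uniform in $\iii$: since \eqref{club} is only a tail statement, one must both tie the thinnest window level $p+l^*$ back to the ball scale via the inequality $\om_{p+l^*-1}>c_\iii\de$ and absorb the finitely many shallow levels into a harmless additive constant. It is the observation in Step two that the packing can be carried out globally---so that only the single quantity $\uc_{\min}$, rather than the generation count $L_1-L_0$, enters the bound---that makes this uniformity attainable.
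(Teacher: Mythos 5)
Your proof is correct and follows essentially the same strategy as the paper's: both exploit the disjointness of the interiors of the cubes $J_\bu$, $\bu\in D_\iii(\de)$, whose diameters all lie between $c_\iii\de$ and $c_\iii\de$ times a minimal single-level ratio, apply Lemma \ref{numberofball} to bound the count by a constant times that ratio to the power $-d$, and then use the scale inequality $c_\iii\de<\om_k/\uc_k$ at the minimizing level $k$ together with \eqref{club} to make $-\log\uc_k$ a vanishing proportion of $-\log(c_\iii\de)$. The only differences are bookkeeping — you minimize $\uc$ over the window $(L_0,L_1]$ rather than over the levels actually attained in $D_\iii(\de)$, and your $\eps$--$K_\eps$ split with the constant $C_\eps'$ replaces the paper's direct appeal to $|\jjj|\to\infty$, which if anything makes the uniformity in $\iii$ slightly more explicit.
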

	
	\begin{proof}
		Given $\iii\in D^*$ and  $\de>0$, there exists  $\jjj\in D_\iii(\de)$ such that 
		\[
		\uc_{|\jjj|}=\min\{\uc_{|\bu|}:\bu\in D_{\iii}(\de)\}.
		\]
		Observe that  $\lim_{\delta \to 0} |\jjj| =\infty$. For every $\bu\in D_{\iii}(\de)$, it is clear that 
		\[
		\frac{c_{\bu}}{c_{\iii}}\le \de<\frac{c_{\bu^-}}{c_\iii}\le\frac{c_{\bu}}{c_\iii \uc_{|\bu|}}\le\frac{c_{\bu}}{c_{\iii}\uc_{|\jjj|}}.
		\]
		For every $\bu\in D_{\iii}(\de)$, $\mbox{int}(J_{\bu})$ contains a ball of  radius $\frac{c_\iii \uc_{|\jjj|}}2\de$ and is contained in a ball of radius $\frac{c_\iii}2\de$.      Furthermore, the elements of the set $\{\mbox{int}(J_\bu):\bu\in D_\iii(\de)\}$ are pairwise disjoint.  
		By Lemma \ref{numberofball}, we have
		\[
		\sup_{x\in E}\#\{\bu\in D_{\iii}(\de):B(x,c_\iii\de)\cap J_{\bu}\neq \emptyset\}\le 4^d  \uc_{|\jjj|}^{-d}.
		\]
		Since  $	c_\iii\de<\frac{c_{\jjj}}{\uc_{|\jjj|}}\le\frac{\om_{|\jjj|}}{ \uc_{|\jjj|}},  $   we have 
		\begin{equation*}
			\lim_{\de\to 0}\frac{\log \sup_{x\in E}\#\{\bu\in D_{\iii}(\de):B(x,c_{\iii}\de)\cap J_{\bu}\neq \emptyset\}}{-\log c_\iii\de}\le\lim_{|\jjj|\to\infty}\frac{d\,(\log4-\log \uc_\jjj)}{\log \uc_\jjj-\log \om_{|\jjj|}}=0
		\end{equation*}
		uniformly for $\iii\in D^*$.
	\end{proof}

	\begin{remark}
	Condition \eqref{AWSC}, first introduced in \cite{AWSC1} as the asymptotically weak separation condition, was subsequently applied in \cite{AWSC2,AWSC3}.
	\end{remark}

	\begin{theorem}\label{xiaoyu}
		Let $ \mathcal{M}(J,\{n_k\},\{\ccc_k\})$ satisfy \eqref{club}. Then every  $E\in\mathcal{M}(J,\{n_k\},\{\ccc_k\})$ satisfies
		\[
		\dimqa E\le t.
		\]
	\end{theorem}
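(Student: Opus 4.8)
The plan is to establish the quantitative estimate $h_E(\eta)\le t$ for every sufficiently small $\eta>0$; since $\dimqa E=\lim_{\eta\to0}h_E(\eta)$ and, writing $t_{\eta}=\limsup_{l\to\infty}\sup_{k\in\kk_{l,\eta}}s_{k+1,k+l}$, the quantities $t_{\eta}$ increase to $t$ as $\eta\downarrow0$, this yields $\dimqa E\le t$. Throughout I fix $x\in E$ and scales $0<r<R^{1+\eta}<R<\rho$ and bound $N_r(B(x,R)\cap E)$ from above, the goal being $\log N_r(B(x,R)\cap E)\le (t_{\eta/2}+\eps)\log(R/r)$ once $\rho$ is small. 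The elementary inequality $\log(R/r)>\eta\log(1/R)$, coming from $r<R^{1+\eta}$, will repeatedly convert factors that are subexponential in $1/R$ into negligible contributions to the exponent.

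First I would anchor at scale $R$. As $\{J_\iii:\iii\in D(R)\}$ is a non-overlapping cover of $E$, we have $N_r(B(x,R)\cap E)\le\sum_\iii N_r(J_\iii\cap E)$, the sum over those $\iii\in D(R)$ with $J_\iii\cap B(x,R)\ne\emptyset$; by \eqref{AWSC} in Lemma~\ref{numberofcap} the number of such $\iii$ is at most $R^{-\eps(R)}$ with $\eps(R)\to0$, so after dividing its logarithm by $\log(R/r)>\eta\log(1/R)$ it contributes $0$ to the exponent. It therefore suffices to bound $N_r(J_\iii\cap E)$ for a single $\iii$, with $|\iii|=k$ and $R\uc_k<c_\iii\le R$. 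For this I cover $J_\iii\cap E$ by the stopping family $\tilde D_\iii=\{\jjj:\iii*\jjj\in D_\iii(\de)\}$, $\de=r/c_\iii$, each of whose sets has diameter $\le r$, so $N_r(J_\iii\cap E)\le\#\tilde D_\iii$. The lengths $|\jjj|$ range over a window $[l_{\min},l_{\max}]$ determined by $\um_{k+1,k+l_{\min}}\le\de<\um_{k+1,k+l_{\min}-1}$ and $\om_{k+1,k+l_{\max}}\le\de<\om_{k+1,k+l_{\max}-1}$ (recall $\om_{k+1,k+l}=\prod_{i=k+1}^{k+l}\oc_i$ and $\um_{k+1,k+l}=\prod_{i=k+1}^{k+l}\uc_i$).

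Setting $s^*=\max_{l_{\min}\le l\le l_{\max}}s_{k+1,k+l}$ and using that each factor $\sum_j c_{i,j}^s$ is decreasing in $s$, we get $\prod_{i=k+1}^{k+l}\sum_j c_{i,j}^{s^*}\le1$ for every $l$ in the window, whence Lemma~\ref{compensate}, applied to the Moran structure shifted to begin at level $k+1$, gives $\sum_{\jjj\in\tilde D_\iii}c_\jjj^{s^*}\le1$. Since every $\jjj\in\tilde D_\iii$ satisfies $c_\jjj>\de\,\uc_{k+|\jjj|}$, this yields
\[
\#\tilde D_\iii\le\Big(\de\,\min_{l_{\min}\le l\le l_{\max}}\uc_{k+l}\Big)^{-s^*},\qquad \de^{-s^*}=(c_\iii/r)^{s^*}\le(R/r)^{s^*},
\]
so the exponent is $s^*$ up to the factor $(\min_l\uc_{k+l})^{-s^*}$. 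To place $s^*$ inside the family defining $t$, I would check that each $l$ in the window gives $k\in\kk_{l,\eta/2}$: from $\um_{k+1,k+l_{\min}}\le\de<R^\eta/\uc_k$ and $\om_k>R\uc_k$, taking logarithms gives
\[
\frac{\log\um_{k+1,k+l_{\min}}}{\log\om_k}>\eta-(\eta+1)\frac{\log\uc_k}{\log\om_k}>\frac{\eta}{2}
\]
for $k$ large by \eqref{club}, and monotonicity of $\um_{k+1,k+l}$ in $l$ then handles all larger $l$.

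The remaining two points are the crux. The $\uc$-correction is subexponential: absorbing $\log(1/\uc_m)\le\eps_m\log(1/\om_m)$ into $\log(1/\om_m)<\log(1/\om_k)+\log(1/\de)+\log(1/\uc_m)$ (valid since $\om_{m-1}>\om_k\de$ for $m\le k+l_{\max}$), together with $\log(1/\om_k)<(1-\eps_k)^{-1}\log(1/R)$ and $\log(1/R)<\eta^{-1}\log(R/r)$, forces $\tfrac{\log(1/\uc_{k+l})}{\log(R/r)}\to0$ uniformly over the window as $R\to0$. The genuinely hard part, however, is to guarantee that $s^*$ is controlled by $t_{\eta/2}$ rather than by short-word contributions: I must show that the effective window depth tends to infinity (so that $s^*\le\sup_{l\ge l_{\min}}\sup_{k\in\kk_{l,\eta/2}}s_{k+1,k+l}\to t_{\eta/2}$) and that the small-$l$ terms $s_{k+1,k+l}$, which could otherwise be as large as $d$, are excluded because the corresponding $k$ fall outside $\kk_{l,\eta/2}$. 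This is exactly where \eqref{club} must be used in full strength and where the argument departs from the $c_*>0$ case of Theorem~\ref{no.1}: when $c_*>0$ one has $\um_{k+1,k+l}\ge c_*^l$, so $\de\to0$ immediately forces $l_{\min}\to\infty$, whereas under \eqref{club} the ratios $\uc_k$ may degenerate and the interplay between the vanishing of $\de$, the exclusion of short depths from $\kk_{l,\eta/2}$, and the boundedness of $s$ on that family must be analysed carefully. Once this is in place, assembling the three estimates gives $\log N_r(B(x,R)\cap E)\le(t_{\eta/2}+\eps)\log(R/r)$ for $\rho$ small, hence $h_E(\eta)\le t_{\eta/2}\le t$, and letting $\eta\to0$ completes the proof.
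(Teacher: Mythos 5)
Your architecture is the same as the paper's --- an outer stopping family $D(R)$ whose relevant cardinality is controlled by Lemma \ref{numberofcap} and the conversion $R^{-\eps}\le (R/r)^{\eps/\eta}$, an inner stopping family counted through Lemma \ref{compensate}, and membership $|\iii|\in\kk_{l,\eta/2}$ extracted from \eqref{club} --- but the proof is incomplete: the step you yourself call ``the genuinely hard part'' is exactly where \eqref{club} must do its real work, and you have not supplied it. Concretely, your bound has the floating exponent $s^*=\max_{l_{\min}\le l\le l_{\max}}s_{k+1,k+l}$, while $t_{\eta/2}$ is a $\limsup$ in $l$ and therefore only dominates $\sup_{k\in\kk_{l,\eta/2}}s_{k+1,k+l}$ for $l$ beyond some threshold $L(\eps)$; nothing you wrote shows $l_{\min}\ge L(\eps)$. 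Worse, the mechanism you propose for shallow depths --- that for small $l$ the level $k$ ``falls outside $\kk_{l,\eta/2}$'' --- cannot work, because it contradicts what you proved two sentences earlier: your membership estimate at $l_{\min}$, together with the monotonicity of $l\mapsto\frac{\log \um_{k+1,k+l}}{\log \om_k}$, puts $k$ \emph{inside} $\kk_{l,\eta/2}$ for every $l\ge l_{\min}$. So if $l_{\min}$ stayed bounded, a shallow value $s_{k+1,k+l}$ (which can be arbitrarily close to $d$, e.g.\ a single level with many children of comparable ratios, and which is perfectly compatible with \eqref{club} and with $t<d$) would enter $s^*$, and your estimate would only give $(R/r)^{\approx d}$.

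The gap is fillable, but it requires an argument you did not give: \eqref{club} forces $l_{\min}\to\infty$ uniformly as $R\to 0$. Indeed, writing $\eps_m=\frac{\log \uc_m}{\log \om_m}\to 0$ and $\tilde\eps_k=\sup_{m>k}\eps_m$, one has $|\log \om_m|\le (1-\eps_m)^{-1}|\log \om_{m-1}|$, hence
$|\log \um_{k+1,k+l}|\le \big[(1-\tilde\eps_k)^{-l}-1\big]\,|\log \om_k|$, while $\de<R^{\eta}/\uc_k$ and $R<c_{\iii^-}\le \om_{k-1}$ give $|\log\de|\ge \frac{\eta}{2}|\log \om_k|$ for large $k$; since any stopped depth $l$ satisfies $\um_{k+1,k+l}\le\de$, this forces $(1-\tilde\eps_k)^{-l}\ge 1+\frac{\eta}{2}$, i.e.\ $l\ge \log\big(1+\frac{\eta}{2}\big)\big/\big|\log(1-\tilde\eps_k)\big|\to\infty$. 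Only with this in hand does $s^*\le t_{\eta/2}+\eps$ follow. It is worth noting that the paper never proves $l_{\min}\to\infty$ and so sidesteps this issue entirely: it fixes the exponent $s^*(\eta)+2\eps$ in advance (its condition \eqref{sk}), splits the inner family into depths $<k_0$ and $\ge k_0$, counts the shallow words trivially by $n_{|\iii|+1}\cdots n_{|\iii|+k_0-1}\le \om_{|\iii|-1}^{-\eps}$ (its condition \eqref{nk}, again a consequence of \eqref{club}), and applies Lemma \ref{compensate} only to the deep words, absorbing the $\uc$-corrections via its condition \eqref{dkmk}. Either route closes the argument; as submitted, yours does not.
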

	\begin{proof}
		Fix $\eta>0$, and  set  $s^{*}(\eta)=\limsup_{l\to\infty}\sup_{k\in \kk_{l,\eta}} s_{k+1,k+l}$. By \eqref{def_t*tt*}, it is clear that $\lim_{\eta \to 0} s^{*}(\eta)=t^*$,  and it suffices to prove that
		\[
		h_E(2\eta)\le s^*(\eta).
		\]
		
		Arbitrarily choose $\eps>0$.	Since \eqref{club} holds, it folows from
		\[
		\f{\f{\log \uc_{k+1}}{\log \om_k}}{1+\f{\log \uc_{k+1}}{\log \om_k}}\le\f{\log \uc_{k+1}}{\log \om_{k+1}}
		\]
		 that $\lim_{k\to\infty}\f{\log \uc_{k+1}}{\log \om_k}=0$. Hence, by induction, we also have $\lim_{k\to\infty}\f{\log \uc_{k+l}}{\log \om_k}=0$ for any $l$.   Note that  $\sum
		_{j=1}^{n_{k}}c_{k,j}^{\,d}\leq 1$ for all $k>0$, it follows that
		\[
		\f{\log n_{k+1}\cdots n_{k+l}}{-\log \om_{k-1}}\le\f{d\log \uc_{k+1}\cdots \uc_{k+l}}{\log \om_{k-1}} .
		\]
Combining these with  Lemma \ref{numberofcap},
		there exist $k_0, k'_0 \in {\N}^+,$ and $\de_0\in (0,\eta)$ such that
		\begin{eqnarray}
			\sup_{k\in \kk_{l,\eta}} s_{k+1,k+l}<s^*(\eta)+2\eps, \qquad &&l\ge k_0,\label{sk}   \\
			\f{\log \uc_k}{\log \om_k}<\frac{1}{2},\qquad  \frac{\om_{|\iii|+k}^{\eps}}{\uc_{|\iii|}^{s^*(\eta)+2\eps}\uc_{|\iii|+k}^{s^*(\eta)+2\eps}}<1,\qquad && k\ge k_0,\ \iii\in D^*,\vspace{0.5em}\label{dkmk}\\
			n_{k+1}n_{k+2}\cdots n_{k+k_0-1}<\om_{k-1}^{- {\eps}},\qquad &&k\ge k'_0,\label{nk}\vspace{0.2em}\\
			\sup_{x\in E}\#\{\bu\in D(\delta):B(x,\de)\cap J_{\bu}\neq \emptyset\}<\de^{-{\eps}},\qquad &&0<\de<\de_0.\label{numberofcut'}
		\end{eqnarray}
		Given $\iii\in D^*$, for each $k\in{\N}^+$ and $\delta>0$, we write
		\[
		D_{\iii}(\de,k)=\{\iii*\jjj\in D^*:c_{\jjj}\le \de<c_{\jjj^-},|\jjj|=k\}
		\]
		and $D(\de,k)=D_{\emptyset}(\de,k)$. For each $\bu=\iii*\jjj\in D_{\iii}(\de,k)$, it is clear that
		$$
		\de^{s^*(\eta)+2\eps}< c_{\jjj^-}^{s^*(\eta)+2\eps} \leq \Big(\frac{c_{\bu}}{c_{\iii}\uc_{|\iii|+k}}\Big)^{s^*(\eta)+2\eps} \le  \frac{c_{\bu}^{s^*(\eta)+\eps}\om_{|\iii|+k}^{\eps}}{c_{\iii^-}^{s^*(\eta)+2\eps}\uc_{|\iii|}^{s^*(\eta)+2\eps}\uc_{|\iii|+k}^{s^*(\eta)+2\eps}}.
		$$
		For $k\ge k_0$,  by \eqref{dkmk}, it follows that
		\begin{equation}\label{estdelta1}
			\begin{aligned}
				\de^{s^*(\eta)+2\eps}&< {c_{\iii^-}^{-(s^*(\eta)+2\eps)}}  c_{\bu}^{s^*(\eta)+\eps} .
			\end{aligned}
		\end{equation}

		Given $r,R$ satisfying $0<r<R^{1+2\eta}<R<\min \{ \de_0,\om_{k_0},\om_{k'_0}\}$,  for every $\iii\in D(R)$  and $\bu\in D_\iii\left(\f rR\right)$, we have $|\iii|\ge \max\{k_0,k'_0\}$ and $\frac{c_\bu}{c_\iii}\leq \frac{r}{R}$. By \eqref{dkmk}, we have
		$$
		\log \om_{|\iii|}\geq 2(\log \om_{|\iii|}-\log\uc_{|\iii|})\geq 2(\log c_{\iii}-\log\uc_{|\iii|}) \geq 2\log c_{\iii^-} \geq 2\log R.
		$$
		Immediately, it follows that 
		\begin{align*}
			\f{\log \um_{|\iii|+1,|\bu|}}{\log \om_{|\iii|}}
			&\ge \f{\log{c_\bu}-\log{c_\iii}}{\log \om_{|\iii|}} >\f{\log\f{r}R}{2\log R} >\eta, 
		\end{align*}
		and this implies  $|\iii|\in \kk_{|\bu|-|\iii|,\eta}$.

		For every $\iii\in D(R)$, define $k_1=\max\big\{k:D_\iii(\f rR,k)\neq\emptyset\big\} $. Then we have the following two cases.
		
		\noindent	Case A: If $k_1 <k_0$,  by \eqref{nk}, we have
		\begin{align*}
			\# D_{\iii}\Big(\f rR\Big)
			&=\sum_{k=1}^{k_1}\# D_{\iii}\Big(\f rR,k\Big)\ \le n_{|\iii|+1}n_{|\iii|+2}\cdots n_{|\iii|+k_0-1} <\om_{|\iii|-1}^{-{\eps}} <c_{\iii^-}^{-{\eps}}.
		\end{align*}
		Case B: If $k_1 \ge k_0$, set $	k'_1=\min\big\{k: D_\iii(\f rR,k)\neq\emptyset,k\ge k_0\big\}$, and  we obtain
		\begin{equation}\label{numberofcut}
			\# D_{\iii}\Big(\f rR\Big)
			=\sum_{k=1}^{k_0-1}\# D_{\iii}\Big(\f rR,k\Big)+\sum_{k=k'_1}^{k_1}\# D_{\iii}\Big(\f rR,k\Big)<c_{\iii^-}^{-{\eps}}+\sum_{k=k'_1}^{k_1}\#D_{\iii}\Big(\f rR,k\Big).
		\end{equation}
	Let
		\begin{align*}
			Q=\Big(\left\{J_\bu:\bu\in D_\iii\Big(\f rR,k'_1\Big)\right\}\Big\backslash \Big\{J_{\bu}:\bu=\mathbf{\tau}|_{|\iii|+k'_1},\mathbf{\tau}\in\bigcup_{k=k'_1}^{k=k_1}D_{\iii}\Big(\f rR,k\Big)\Big\}\Big)\\
			\bigcup \Big\{J_{\bu}:\bu\in \bigcup_{k=k'_1}^{k=k_1}D_{\iii}\Big(\f rR,k\Big)\Big\}.
		\end{align*}
	Then $Q$ is a finite, non-overlapping covering of $J_\iii \cap E$, and by \eqref{estdelta1}, we have 
		$$	
		\sum_{k=k'_1}^{k_1}\# D_{\iii}\Big(\f rR,k\Big)\Big(\f rR\Big)^{s^*(\eta)+2\eps} <\frac1 {c_{\iii^-}^{s^*(\eta)+2\eps}}\sum_{k=k'_1}^{k_1}\sum_{\bu\in D_{\iii}(\f rR,k)}c_{\bu}^{s^*(\eta)+2\eps} 
		\le\frac1 {c_{\iii^-}^{s^*(\eta)+2\eps}}\sum_{J_\bu\in Q}c_{\bu}^{s^*(\eta)+2\eps}
		$$	
		By Lemma \ref{compensate},  there exists  $k_1'\leq k^*\leq k_1$ such that  
		$$
		\sum_{J_\bu\in Q}c_{\bu}^{s^*(\eta)+2\eps}\leq c_{\iii}^{s^*(\eta)+2\eps}\sum_{\jjj\in  D_{|\iii|+1,|\iii|+k^*}}c_{\jjj}^{s^*(\eta)+2\eps}= c_{\iii}^{s^*(\eta)+2\eps} \prod_{k=|\iii|+1}^{|\iii|+k^*}\sum_{j=1}^{n_k}c_{k,j}^{s^*(\eta)+2\eps}.
		$$
		Note that $|\iii|\in \kk_{k'_1,\eta}\subseteq \kk_{k^*,\eta}$.   Combining these with	\eqref{sk}, we have that
		\begin{equation*} 
			\begin{aligned}
				\sum_{k=k'_1}^{k_1}\# D_{\iii}\Big(\f rR,k\Big)\Big(\f rR\Big)^{s^*(\eta)+2\eps}
				&<c_{\iii^-}^{-\eps}\prod_{k=|\iii|+1}^{|\iii|+k^*}\sum_{j=1}^{n_k}c_{k,j}^{s^*(\eta)+2\eps}<c_{\iii^-}^{-\eps}.
			\end{aligned}
		\end{equation*}  
		By  \eqref{numberofcut}, it follows that
		\begin{equation*}
			\#D_{\iii}\Big(\f rR\Big)< c_{\iii^-}^{-{\eps}}+c_{\iii^-}^{-{\eps}}\Big(\f Rr \Big)^{s^*(\eta)+2\eps}< 2c_{\iii^-}^{-{\eps}}\Big(\f Rr\Big)^{s^*(\eta)+2\eps}.
		\end{equation*}
		
		Combining Case A and Case B, we obtain that
		\begin{equation}\label{numberofcut2}
			\#D_{\iii}\left(\f rR\right)\le 2c_{\iii^-}^{-{\eps}}\Big(\f Rr\Big)^{s^*(\eta)+2\eps} \le 2R^{-{\eps}}\Big(\f Rr\Big)^{s^*(\eta)+2\eps},
		\end{equation}
		for $0<r<R^{1+2\eta}<R<\min \{ \de_0,\om_{k_0},\om_{k'_0} \}$ and $\iii\in D(R)$.

		Fix $x\in E$. Next we estimate $N_r(B(x,R)\cap E)$ for $0<r<R^{1+2\eta}<R<\min\left\{ \de_0,\om_{k_0},\om_{k'_0}\right\}$. It is clear that
		\[
		B(x,R)\cap E\subset \bigcup_{\iii\in D(R),J_{\iii}\cap B(x,R)\cap E\neq \emptyset} J_{\iii}\cap E.
		\]
		For each $\iii\in D(R)$ with $J_{\iii}\cap B(x,R)\cap E\neq \emptyset$, we have
		\[
		J_{\iii}\cap E\subseteq\bigcup_{\bu\in D_{\iii}\left(\frac r R\right)} J_{\bu}.
		\]
		For each $\bu\in D_{\iii}\left(\frac r R\right)$, choose a point $x_\bu\in J_\bu\cap E$. Then $	J_{\bu}\subseteq B(x_{\bu},r) $ since  $c_{\bu}\le  r$ and $|J|=1$.
		Hence, it follows that 
		\[
		B(x,R)\cap E\subset\bigcup_{\iii\in D(R),J_{\iii}\cap B(x,R)\cap E\neq \emptyset}\ \bigcup_{\bu\in D_{\iii}\left(\frac r R\right)}B(x_{\bu},r).
		\]
		Combining it with \eqref{numberofcut2}, we have 
		\begin{align*}
			N_r(B(x,R)\cap E)&\le \sum_{\substack{\iii\in D(R)\\J_{\iii}\cap B(x,R)\cap E\neq \emptyset}}\# D_{\iii}\left(\frac r R\right)<2R^{-{\eps}}\sum_{\substack{\iii\in D(R)\\J_{\iii}\cap B(x,R)\cap E\neq \emptyset}}\Big(\frac R r\Big)^{s^*(\eta)+2\eps} 
		\end{align*}	
		Since $R^{-\eta}<\f Rr$, by \eqref{numberofcut'}, it follows that 
		\begin{align*}
			N_r(B(x,R)\cap E)
			&<2R^{-{\eps}}\Big(\frac R r\Big)^{s^*(\eta)+2\eps}\sup_{x\in E}\#\{\iii\in D(R):B(x,R)\cap J_{\iii}\cap E\neq \emptyset\}\\
			&\le 2R^{-{\eps}}\Big(\frac R r\Big)^{s^*(\eta)+2\eps}\sup_{x\in E}\#\{\iii\in D(R):B(x,R)\cap J_{\iii}\neq \emptyset\}\\
			&<2\Big(\frac R r\Big)^{s^*(\eta)+2\eps+\f{\eps}{\eta}}.
		\end{align*}
		Therefore, $h_E(2\eta)\le s^*(\eta)+2\eps+\f{\eps}{\eta}.$  Letting $\eps\to 0$ completes the proof.
	\end{proof}

	\begin{proposition}\label{BC->BNC}
		Let $\mathcal{M}(J,\{n_k\},\{\ccc_k\})$ satisfy the BBC. Then every $E\in\mathcal{M}(J,\{n_k\},\{\ccc_k\})$ satisfies the BNC.
	\end{proposition}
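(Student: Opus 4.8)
The plan is to bound, for each $x\in E$ and each small $\de$, the cardinality of
\[
\mathcal{A} \;:=\; \{\bu\in D(\de): E\cap J_{\bu}\cap B(x,\de)\neq\emptyset\}
\]
by a constant independent of $x$ and $\de$, and then pass to $\limsup_{\de\to0}$. The entire difficulty is the wild disparity in the diameters $c_{\bu}$ for $\bu\in D(\de)$: although $c_{\bu}\le\de$, a word may be a tiny child of a parent whose diameter far exceeds $\de$, so that many minuscule basic sets could in principle accumulate inside $B(x,\de)$. I would dissolve this by combining two controls — the BBC to bound the number of $D(\de)$-words lying below a fixed parent, and a packing argument to bound the number of relevant parents. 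Note that it suffices to bound the larger quantity obtained by dropping the constraint $E\cap{}$, i.e. to count $\bu\in D(\de)$ with $J_{\bu}\cap B(x,\de)\neq\emptyset$.

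First I record the combinatorial structure. Since $c_{\bu}\le\de<c_{\bu^-}$ for $\bu\in D(\de)$, no element of $D(\de)$ is a prefix of another; hence, by the MSC (siblings have disjoint interiors) together with the longest-common-prefix argument, the sets $\{J_{\bu}:\bu\in D(\de)\}$ have pairwise disjoint interiors. For $\bu\in\mathcal{A}$ write $\bv=\bu^-$; these parents satisfy $c_{\bv}>\de$. Let $P=\{\bu^-:\bu\in\mathcal{A}\}$ and let $P_{\max}$ be the set of prefix-minimal elements of $P$ (those having no strict prefix in $P$). Then $P_{\max}$ is an antichain, so the $J_{\bv}$, $\bv\in P_{\max}$, again have pairwise disjoint interiors; and every $\bu\in\mathcal{A}$ is a descendant of some $\bv\in P_{\max}$, obtained by taking the shortest prefix of $\bu^-$ that lies in $P$.

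Next I apply the BBC. Each $\bv\in P_{\max}$ equals $\bu_0^-$ for some $\bu_0\in\mathcal{A}\subseteq D(\de)$, so the BBC bound applied to $\bu_0$ gives
\[
\#\{\bw\in D(\de):\bw=\bv*\iii\ \text{for some}\ \iii\}\le M,
\]
where $M$ is a uniform constant for all small $\de$ by the definition of the BBC. Since every $\bu\in\mathcal{A}$ is such a $D(\de)$-descendant of some $\bv\in P_{\max}$, this yields $\#\mathcal{A}\le M\cdot\#P_{\max}$, and it remains only to bound $\#P_{\max}$ geometrically.

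Finally, the packing step. Each $J_{\bv}$ with $\bv\in P_{\max}$ is a (rotated) cube of side $c_{\bv}/\sqrt d>\de/\sqrt d$ meeting $B(x,\de)$, and these cubes have pairwise disjoint interiors. Choosing a point of $J_{\bv}\cap B(x,\de)$ and pushing it inward by $\tfrac{\de}{4\sqrt d}$ in each coordinate produces inside each $J_{\bv}$ a ball of radius $\tfrac{\de}{4\sqrt d}$ contained in $B(x,2\de)$; these balls are disjoint, so a volume count bounds $\#P_{\max}$ by a constant $C_d$ depending only on $d$. Combining the estimates gives $\sup_{x\in E}\#\mathcal{A}\le C_d M$ for all small $\de$, and letting $\de\to0$ yields the BNC. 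I expect the main obstacle to be exactly the large-parent/tiny-child phenomenon; the key idea that removes it is that the BBC lets us charge all of $\mathcal{A}$ to the boundedly many fat parent-cubes meeting $B(x,\de)$, rather than attempting to pack the wildly varying sets $J_{\bu}$ themselves.
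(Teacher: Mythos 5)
Your proposal is correct and follows essentially the same strategy as the paper's proof: collect the parents $\bu^-$ of the relevant words of $D(\de)$, pass to an antichain among these parents, bound the number of antichain elements meeting $B(x,\de)$ by a packing/volume argument (each is a cube of diameter exceeding $\de$ with pairwise disjoint interiors), and bound the multiplicity above each antichain element by the BBC. The one substantive difference is the direction of the pruning: the paper removes $\bv$ from $\mathcal{S}(\de)=\{\bu^-:\bu\in D(\de)\}$ whenever some extension $\bv*\iii$ lies in $\mathcal{S}(\de)$, i.e.\ it keeps the \emph{deepest} parents, and with that choice a word of $D(\de)$ whose parent was removed has no prefix left in the pruned set, so the charging step ``every counted word sits below a kept parent meeting $B(x,\de)$'' does not go through as written. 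Your choice of the prefix-\emph{minimal} (outermost) parents is precisely what makes that charging valid, since every $\bu\in\mathcal{A}$ is then a $D(\de)$-descendant of a kept parent $\bv$, and the BBC applies to $\bv$ because $\bv=\bu_0^-$ for some $\bu_0\in D(\de)$; in this respect your write-up is a cleaner (indeed corrected) version of the paper's intended argument, at the cost only of a slightly larger absolute constant.
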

	\begin{proof}
		Given  $\de>0$, let	$\mathcal{S}(\de)=\big\{\bu^-:\bu\in D(\de)\big\}$. For $\bv\in \mathcal{S}(\de)$,  remove $\bv$ from $\mathcal{S}(\de)$ if  $\bv*\iii\in \mathcal{S}(\de)$ for some $\iii$, and we denote the resulting set by $\mathcal{S}'(\de)$. Note that $|J_\bu|=c_\bu>\de$ for any $\bu\in \mathcal{S}'(\de)$ and the elements of $\{\mbox{int}(J_\bu):\bu\in \mathcal{S}'(\de)\}$ are pairwise disjoint.

		Since in $\mathbb{R}^d$, a ball of radius $\de$ can intersect at most finitely many pairwise disjoint balls whose radii exceed $\frac\de2$, there exists a constant $N_d$ depending only on the ambient dimension $d$ such that
		\begin{equation*}
			\sup_{\de>0,x\in E} \#\big\{\bu\in \mathcal{S}'(\de): E\cap J_\bu\cap B(x,\de)\neq \emptyset \big\}<N_d.
		\end{equation*}
		It then follows that
		\begin{align*}
			\sup_{x\in E} \#\big\{\bu\in D(\de):E\cap J_\bu\,\cap &\,B(x,\de)\neq \emptyset \big\}\\
			&< N_d\sup_{\bu\in D(\de)}\#\big\{\bu'\in D(\de):\bu'=\bu^-*\iii\text {\ for some }\iii\big\}.
		\end{align*}
		Since  $\mathcal{M}(J,\{n_k\},\{\ccc_k\})$ satisfies the BBC,  the conclusion holds by taking $\de\to 0$.
	\end{proof}

	\begin{theorem}\label{xiaoyu2}
		Let $ \mathcal{M}(J,\{n_k\},\{\ccc_k\})$ satisfy BBC. Then every $E\in\mathcal{M}(J,\{n_k\},\{\ccc_k\})$ satisfies
		\[
		\dimqa E\le t^*.
		\]
	\end{theorem}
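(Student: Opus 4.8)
The plan is to mirror the proof of Theorem \ref{xiaoyu}, replacing every use of the hypothesis \eqref{club} by a consequence of BBC, and replacing the index family $\kk_{l,\eta}$ by the larger family $\ok_{l,\eta}$. Fix $\eta>0$ and set
\[
\s(\eta)=\limsup_{l\to\infty}\sup_{k\in\ok_{l+1,\eta}}s_{k+1,k+l},
\]
so that $\lim_{\eta\to0}\s(\eta)=t^*$ by \eqref{def_t*tt*}. It then suffices to prove $h_E(\eta)\le\s(\eta)$, which by \eqref{def_hED} reduces, after fixing an arbitrary $\eps>0$, to establishing a bound of the form $N_r(B(x,R)\cap E)\le C(R/r)^{\s(\eta)+O(\eps)}$ for all $x\in E$ and all $0<r<R^{1+\eta}<R<\rho$ with $\rho$ small.

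First I would collect the BBC inputs that take over the role of \eqref{club}. Proposition \ref{BC} supplies finite constants $N=\sup_k n_k$ and $L=\sup_k b_k$; I fix $k_0$ so large that $\sup_{k\in\ok_{l+1,\eta}}s_{k+1,k+l}<\s(\eta)+\eps$ for all $l\ge k_0$ (this is the analogue of \eqref{sk}). Corollary \ref{contr} gives $\om_m\to0$, so the counting estimate \eqref{nk} is replaced by the crude bound $n_{|\iii|+1}\cdots n_{|\iii|+k_0-1}\le N^{k_0-1}<c_{\iii^-}^{-\eps}$ once $|\iii|$ is large, while Proposition \ref{BC->BNC} yields a uniform $N_0$ with $\sup_{x\in E}\#\{\iii\in D(R):E\cap J_\iii\cap B(x,R)\neq\emptyset\}<N_0$, replacing \eqref{numberofcut'}. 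The decisive structural change is the substitute for the chain that produced $|\iii|\in\kk_{\cdot,\eta}$: since $\iii\in D(R)$ forces $c_{\iii^-}>R$, one has $\om_{|\iii|-1}\ge c_{\iii^-}>R$ and hence $\log\om_{|\iii|-1}>\log R$ \emph{directly}, with no appeal to \eqref{dkmk}. Consequently, for any $\bu=\iii*\jjj\in D_\iii(r/R)$ with $r<R^{1+\eta}$,
\[
\f{\log\um_{|\iii|+1,|\bu|}}{\log\om_{|\iii|-1}}\ge\f{\log c_\jjj}{\log\om_{|\iii|-1}}\ge\f{\log(r/R)}{\log R}>\eta,
\]
so that $|\iii|\in\ok_{|\bu|-|\iii|,\eta}$. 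It is exactly the use of $\om_{|\iii|-1}$ in place of $\om_{|\iii|}$, which is all BBC can guarantee without \eqref{club}, that lands the index in $\ok$ and thereby produces $t^*$ rather than $t$.

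With this estimate the counting proceeds as in Theorem \ref{xiaoyu}. For $\iii\in D(R)$ let $k_1$ be the largest depth occurring in $D_\iii(r/R)$; if $k_1<k_0$ the shallow bound $\#D_\iii(r/R)\le N^{k_0-1}<c_{\iii^-}^{-\eps}$ applies. Otherwise let $k_1'\ge k_0$ be the smallest such depth: the shallowest deep cut has length $k_1'$, whence $|\iii|\in\ok_{k_1',\eta}$. Forming the finite non-overlapping cover $Q$ of $J_\iii\cap E$ and applying Lemma \ref{compensate} produces some $k_1'\le k^*\le k_1$ with
\[
\sum_{J_\bu\in Q}c_\bu^{\s(\eta)+2\eps}\le c_\iii^{\s(\eta)+2\eps}\prod_{i=|\iii|+1}^{|\iii|+k^*}\sum_{j=1}^{n_i}c_{i,j}^{\s(\eta)+2\eps}.
\]
By monotonicity $|\iii|\in\ok_{k_1',\eta}\subseteq\ok_{k^*+1,\eta}$, so $s_{|\iii|+1,|\iii|+k^*}\le\sup_{k\in\ok_{k^*+1,\eta}}s_{k+1,k+k^*}<\s(\eta)+\eps$, and the defining equation \eqref{def_skl} forces the product above to be $<1$. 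Summing over the $\iii\in D(R)$ meeting $B(x,R)$ and invoking the BNC bound $N_0$ then gives $h_E(\eta)\le\s(\eta)+O(\eps)$; letting $\eps\to0$ and then $\eta\to0$ yields $\dimqa E\le t^*$.

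The hard part will be the one place where \eqref{club} was genuinely essential, namely the passage from the per-word inequality to the clean comparison \eqref{estdelta1}: there one must absorb the factor $\om_{|\iii|+k}^{\eps}/(\uc_{|\iii|}\uc_{|\iii|+k})^{\s(\eta)+2\eps}$, and under BBC the ratios $\uc_k$ may decay arbitrarily fast, so this factor is not controlled as before. The remedy I would use is the bounded cut-window furnished by Proposition \ref{BC}: since $\sup_k b_k=L<\infty$, the depths appearing in $D_\iii(r/R)$ fill an interval of length at most $L$, and the inequality $\om_{k,k+L-1}\le\uc_k$ compares single-level minima with $L$-level maxima. This confines the potential $\uc$-blowup to a bounded multiplicative loss of order $N^L$ together with a subpolynomial factor $c_{\iii^-}^{-O(\eps)}$, precisely what is needed for Lemma \ref{compensate} to operate at the single level $D_{|\iii|+1,|\iii|+k^*}$. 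Checking that this window control dovetails with the index bookkeeping, so that the length $k^*$ selected by Lemma \ref{compensate} still satisfies $|\iii|\in\ok_{k^*+1,\eta}$, is the delicate point on which the whole argument hinges.
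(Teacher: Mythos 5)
Your skeleton does coincide with the paper's proof at several points: the reduction to bounding $h_E(\eta)$ by $\limsup_{l\to\infty}\sup_{k\in\ok_{l+1,\eta}}s_{k+1,k+l}$, the observation that $\iii\in D(R)$ gives $\log\om_{|\iii|-1}\ge\log c_{\iii^-}>\log R$ and hence $|\iii|\in\ok_{|\bu|-|\iii|,\eta}$ (this is exactly how the paper lands in $\ok$ and produces $t^*$ rather than $t$), the shallow/deep case split at a threshold $k_0$, and the final summation over $\iii\in D(R)$ via Proposition \ref{BC->BNC}. But the step you yourself isolate as ``the hard part'' is a genuine gap, and your proposed remedy rests on a false claim. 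It is \emph{not} true under BBC that the depths appearing in $D_\iii(r/R)$ fill an interval of length at most $L$: for $n_k\equiv2$, $\ccc_k\equiv(\f12,\f14)$ one has $c_*>0$, hence BBC with $\sup_k b_k=1$ (Corollary \ref{c_*->BC}, Proposition \ref{BC}), yet at scale $\de=r/R$ the cut contains the all-$1$ word at depth about $\log_2(1/\de)$ and the all-$2$ word at depth about $\f12\log_2(1/\de)$; Proposition \ref{BC} only bounds the depths of cut words lying below the parent of a \emph{fixed} cut word, not across the whole cut. More fatally, the quantity you must absorb cannot be absorbed: the only comparison of $\de$ with a cut word's own ratio is $\de^s\le(\de/c_\jjj)^sc_\jjj^s$ with $\de/c_\jjj<c_{\jjj^-}/c_\jjj\le\uc_{|\bu|}^{-1}$, and under BBC this ratio can be of order $\de^{-1}$ itself: in the homogeneous structure $\ccc_k=(2^{-2^k},2^{-2^k})$ of Example \ref{1-2-3+}(1) (BBC holds with $b_k=0$), taking $\de=\f12\om_{k-1}$ gives cut words with $c_\jjj=\om_k$ and $\de/c_\jjj=\de^{-1}$ exactly. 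So no analogue of \eqref{estdelta1} survives, the loss is a fixed power of $R/r$ rather than $N^L c_{\iii^-}^{-O(\eps)}$, and your Case B never connects $\#D_\iii(\f rR)(\f rR)^{\s(\eta)+2\eps}$ to $\sum_{J_\bu\in Q}c_\bu^{\s(\eta)+2\eps}$.

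The paper closes precisely this gap with a different device, which is the one idea missing from your proposal: compare $\de$ with the \emph{parents} of the cut words rather than with the cut words themselves. By definition of $D_\iii(\de)$ one has $\de<c_{\jjj^-}$ for free, so $\#D_\iii(\de)\,\de^s<c_\iii^{-s}\sum_{\bu\in D_\iii(\de)}c_{\bu^-}^s$ with no loss at all. BBC then enters not to control depths or ratios but to bound \emph{multiplicities}: each parent supports boundedly many cut words, so after pruning $\mathcal{S}_\iii(\de)=\{\bu^-:\bu\in D_\iii(\de)\}$ to an antichain of parents that still forms a non-overlapping covering of $J_\iii\cap E$, the sum above is at most $N$ times the sum over that antichain, to which Lemma \ref{compensate} applies directly, yielding $N\prod_{k=|\iii|+1}^{|\iii|+k^*}\sum_{j=1}^{n_k}c_{k,j}^s$ with $k^*+1\ge\min\{|\bu|-|\iii|:\bu\in D_\iii(\de)\}$ --- exactly the range your bookkeeping $|\iii|\in\ok_{k^*+1,\eta}$ requires (this is \eqref{numberofD_ide} in the paper). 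With this substitution the rest of your argument goes through essentially as written; without it, the proposal does not prove the theorem.
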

	\begin{proof}
		For any $\iii\in D^*$ and $\de\in (0,1)$, define $\mathcal{S}_\iii(\de)=\{\bu^-:\bu\in D_{\iii}(\de)\}$. For $\bv\in \mathcal{S}_\iii(\de)$, remove $\bv$ from $\mathcal{S}_\iii(\de)$ if  $\bv*\jjj\in \mathcal{S}_\iii(\de)$ for some $\jjj$, and denote the resulting set by $\mathcal{S}_\iii'(\de)$. Since BBC holds, each element of $\mathcal{S}_\iii'(\de)$ contains at most $N$ elements of $D_\iii(\de)$. Moreover, $\{J_\bu:\bu\in \mathcal{S}_\iii'(\de)\}$ is a finite nonoverlapping covering of Moran set $J_\iii\cap E$. Therefore, by Lemma \ref{compensate}, for any $s>0$, it follows that
		\begin{equation}\label{numberofD_ide}
			\# D_\iii(\de)\de^s<c_\iii^{-s}\sum_{\bu\in D_\iii(\de)}c_{\bu^-}^s\le Nc_\iii^{-s}\sum_{\bu\in \mathcal{S}_\iii'(\de)}c_{\bu}^s\le N \prod_{k=|\iii|+1}^{|\iii|+k^*}\sum_{j=1}^{n_k}c_{k,j}^s,
		\end{equation}
		where $k^*\ge \min\{|\bu|-|\iii|:\bu\in \mathcal{S}_\iii'(\de)\}=\min\{|\bu|-|\iii|:\bu \in D_\iii(\de)\}-1$.
			
Fix $\eta\in (0,1)$. For  $s>\limsup_{l\to\infty}\sup_{k\in \ok_{l+1,\eta}} s_{k,k+l}$, there exists $k_0>2$ such that
		\begin{equation}\label{K'}
			\sup_{k\in \ok_{l+1,\eta}} s_{k+1,k+l}<s,
		\end{equation}
 for $l\ge k_0$. By \eqref{dimHBAQ} and \eqref{def_t*tt*}, it suffices to prove that 
$$
h_F(\eta) \leq s. 
$$
	
Since $ \mathcal{M}(J,\{n_k\},\{\ccc_k\})$ satisfies  BBC,  by Propositions \ref{BC} and \ref{BC->BNC}, there exist reals $M>0$ and $\de_0>0$ such that $\sup_{k\ge 1} n_k<M$ and 
	\begin{gather}
		\sup_{x\in E} \#\big\{\bu\in D(\de):E\cap J_\bu\,\cap \,B(x,\de)\neq \emptyset \big\}<M,\qquad 0<\de<\de_0.\label{bnc}
	\end{gather}
		Given $r,R$ satisfying $0<r<R^{1+\eta}<R<\de_0$,  for every $\iii\in D(R)$  and $\bu\in D_\iii\left(\f rR\right)$, we have $c_{\iii^-}>R$ and $\frac{c_\bu}{c_\iii}\le \frac{r}{R}<R^\eta$. Hence,
		\[
		\f{\log \um_{|\iii|+1,|\bu|}}{\log \om_{|\iii|-1}}\ge \f{\log c_\bu-\log c_\iii}{\log c_{\iii^-}}>\frac{\log r-\log R}{\log R}>\eta,
		\]
		which implies that $|\iii|\in\ok_{|\bu|-|\iii|,\eta}$. Therefore, 
		\[
		|\iii|\in\ok_{\min\left\{|\bu|-|\iii|:\,\bu\in D_\iii(\f r R)\right\},\eta}\subseteq\ok_{k^*+1,\eta},
		\]
		where $k^*$ is given by \eqref{numberofD_ide}.
		
	In addition, for every $\iii\in D(R)$, we distinguish two cases:
		
		\noindent	Case A: If $k^* < k_0$,  by  \eqref{numberofD_ide} and $\sup_{k\ge 1} n_k<M$, we have
		\begin{align*}
			\# D_{\iii}\Big(\f rR\Big)\Big(\f rR\Big)^s
			& \le NM^{k^*}\om_{|\iii|+1,|\iii|+k^*}^s< NM^{k_0}.
		\end{align*}
		Case B: If $k^* \ge k_0$, recall that $|\iii|\in\ok_{k^*+1,\eta}$. Then by \eqref{numberofD_ide} and \eqref{K'}, we have
		$$	
		\# D_{\iii}\Big(\f rR\Big)\Big(\f rR\Big)^s<N.
		$$	
	
	Combining Case A and Case B, we obtain that
		\begin{equation*}
			\#D_{\iii}\Big(\f rR\Big)<NM^{k_0}\Big(\f Rr\Big)^s.
		\end{equation*}
By an argument analogous to that used in the proof of Theorem \ref{xiaoyu}, we obtain that,	for all $x\in E$, 
$$
N_r(B(x,R)\cap E)\le \sum_{\substack{\iii\in D(R)\\J_{\iii}\cap B(x,R)\cap E\neq \emptyset}}\# D_{\iii}\left(\frac r R\right)<NM^{k_0}\sum_{\substack{\iii\in D(R)\\J_{\iii}\cap B(x,R)\cap E\neq \emptyset}}\Big(\frac R r\Big)^s
$$
		Finally, combining it with \eqref{bnc}, we obtain that 
		\begin{align*}
			\sup_{x\in E}N_r(B(x,R)\cap E) &<NM^{k_0}\sup_{x\in E}\#\{\bu\in D(R):E\cap J_{\bu}\cap B(x,R)\neq \emptyset\}\Big(\frac R r\Big)^s\\
			&<NM^{k_0+1}\Big(\frac R r\Big)^s. 
		\end{align*}
	 Therefore, $h(\eta)\le s$, and the desired inequality $h_F(\eta) \leq s$ follows.
	\end{proof}
	
	\begin{lemma}\label{t=t^*}
For the Moran structure  $\mathcal{M}(J,\{n_k\},\{\ccc_k\})$, the following statements hold.
	\begin{enumerate}[(i)]
		\item If $\lim_{l\to \infty}\sup_{k\ge 1}\f{\log \uc_k}{\log \om_{k+1,k+l}}=0$, then for all $\eta>0$,
		\[
		\limsup_{l\to \infty}\sup_{k\in \ok_{l,\eta}}s_{k+1,k+l}=\limsup_{l\to \infty}\sup_{k\in \ok_{l,\eta}}s_{k,k+l}.
		\]
		\item If $\lim_{l\to \infty}\sup_{k\ge 1}\f{\log \uc_{k+l+1}}{\log \om_{k+1,k+l}}=0$, then for all $\eta>0$,
		\[
		\limsup_{l\to \infty}\sup_{k\in \ok_{l+1,\eta}}s_{k+1,k+l}=	\limsup_{l\to \infty}\sup_{k\in \ok_{l+1,\eta}}s_{k+1,k+l+1}=\limsup_{l\to \infty}\sup_{k\in \ok_{l,\eta}}s_{k+1,k+l}.
		\]
	\end{enumerate}
	\end{lemma}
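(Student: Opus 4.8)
The plan is to reduce both parts to a single quantitative estimate: the solution $s_{k,k+l}$ of \eqref{def_skl} changes by a controlled amount when one factor is inserted at the bottom (passing from $s_{k+1,k+l}$ to $s_{k,k+l}$) or at the top (passing from $s_{k+1,k+l}$ to $s_{k+1,k+l+1}$). Writing $f_i(s)=\sum_{j=1}^{n_i}c_{i,j}^s$, I would first record that $0\le s_{k,k+l}\le d$ for all $k,l$: the map $s\mapsto\prod_{i}f_i(s)$ is strictly decreasing, equals $\prod_i n_i>1$ at $s=0$, and is $\le 1$ at $s=d$ by the standing assumption $\sum_j c_{i,j}^d\le 1$.

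The key estimate I would establish is
\[
|s_{k,k+l}-s_{k+1,k+l}|\le d\,\frac{\log\uc_k}{\log\om_{k+1,k+l}},\qquad |s_{k+1,k+l+1}-s_{k+1,k+l}|\le d\,\frac{\log\uc_{k+l+1}}{\log\om_{k+1,k+l}}.
\]
To prove the first inequality, set $G(s)=\sum_{i=k+1}^{k+l}\log f_i(s)$, so that $G(s_{k+1,k+l})=0$, while the defining relation $f_k(s_{k,k+l})\prod_{i=k+1}^{k+l}f_i(s_{k,k+l})=1$ gives $G(s_{k,k+l})=-\log f_k(s_{k,k+l})$. The mean value theorem then produces $\xi$ between the two solutions with $s_{k,k+l}-s_{k+1,k+l}=-\log f_k(s_{k,k+l})/G'(\xi)$. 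Each logarithmic derivative $f_i'(s)/f_i(s)$ is a convex combination of the numbers $\log c_{i,j}\le\log\oc_i$, whence $G'(\xi)\le\sum_{i=k+1}^{k+l}\log\oc_i=\log\om_{k+1,k+l}<0$ and $|G'(\xi)|\ge-\log\om_{k+1,k+l}$. For the numerator, the range $0\le s_{k,k+l}\le d$ and $\uc_k\le c_{k,j}\le 1$ give $\uc_k^{d}\le f_k(s_{k,k+l})\le n_k\le\uc_k^{-d}$ (the last step from $n_k\uc_k^d\le\sum_j c_{k,j}^d\le1$), so $|\log f_k(s_{k,k+l})|\le-d\log\uc_k$. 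Dividing yields the bound; the second inequality is identical with the inserted factor $f_{k+l+1}$ in place of $f_k$ and the same denominator $\log\om_{k+1,k+l}$.

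Granting the estimate, part (i) would follow at once: its hypothesis says $\sup_{k\ge1}\frac{\log\uc_k}{\log\om_{k+1,k+l}}\to0$ as $l\to\infty$, so the elementary inequality $|\sup_k a_k-\sup_k b_k|\le\sup_k|a_k-b_k|$, applied over $k\in\ok_{l,\eta}$, gives
\[
\Big|\sup_{k\in\ok_{l,\eta}}s_{k+1,k+l}-\sup_{k\in\ok_{l,\eta}}s_{k,k+l}\Big|\le d\sup_{k\ge1}\frac{\log\uc_k}{\log\om_{k+1,k+l}},
\]
whose right-hand side tends to $0$; taking $\limsup_{l\to\infty}$ then yields the stated equality. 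For part (ii), the first equality is the same argument with the top-extension estimate, since its hypothesis forces $\sup_{k\ge1}\frac{\log\uc_{k+l+1}}{\log\om_{k+1,k+l}}\to0$, so $\sup_{k\in\ok_{l+1,\eta}}s_{k+1,k+l}$ and $\sup_{k\in\ok_{l+1,\eta}}s_{k+1,k+l+1}$ differ by a quantity tending to $0$. The second equality needs no estimate: substituting $l'=l+1$ turns $\limsup_{l\to\infty}\sup_{k\in\ok_{l+1,\eta}}s_{k+1,k+l+1}$ into $\limsup_{l'\to\infty}\sup_{k\in\ok_{l',\eta}}s_{k+1,k+l'}$, which is exactly the right-hand side.

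The main obstacle will be the key estimate, and specifically its two ingredients. The denominator bound rests on recognizing that $|G'|$ is at least $-\log\om_{k+1,k+l}$ because a probability-weighted average of the $\log c_{i,j}$ cannot exceed $\log\oc_i$; the numerator bound rests on the a priori range $0\le s_{k,k+l}\le d$ together with $n_k\le\uc_k^{-d}$. Once one sees that the per-step change in $s$ is governed precisely by the ratios $\log\uc/\log\om$ appearing in the hypotheses, both parts are routine, the only remaining care being that the suprema are taken over the moving index sets $\ok_{l,\eta}$, which is harmless because the estimate is uniform in $k$.
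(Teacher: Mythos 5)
Your proposal is correct, and it shares the paper's outer architecture: both arguments reduce the lemma to the uniform statement $\lim_{l\to\infty}\sup_{k\ge 1}|s_{k,k+l}-s_{k+1,k+l}|=0$ (with a top-insertion analogue for (ii)), then conclude via $|\sup_k a_k-\sup_k b_k|\le \sup_k|a_k-b_k|$ over the common index set $\ok_{l,\eta}$, together with the reindexing $l'=l+1$ that the paper leaves implicit in its ``(ii) follows in a similar manner''. Where you genuinely differ is in how the key uniform estimate is proved. The paper first sandwiches $s_{k,k+l}$ between $\min\{s_k,s_{k+1,k+l}\}$ and $\max\{s_k,s_{k+1,k+l}\}$ and then runs a three-case $\eps$-perturbation of the defining product, showing that $\prod_{i=k}^{k+l}\sum_{j}c_{i,j}^{s_{k+1,k+l}\pm\eps}$ tends to $0$ (resp.\ $+\infty$) uniformly in $k$ under the hypothesis; this is a soft argument with no explicit rate. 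You instead apply the mean value theorem to $G(s)=\sum_{i=k+1}^{k+l}\log f_i(s)$, bounding the numerator by $|\log f_k(s_{k,k+l})|\le -d\log\uc_k$ (using $0\le s_{k,k+l}\le d$ and $n_k\uc_k^{d}\le 1$) and the derivative from below by $|G'(\xi)|\ge -\log\om_{k+1,k+l}$ (each logarithmic derivative $f_i'/f_i$ being a weighted average of the $\log c_{i,j}\le \log\oc_i$). This buys the explicit quantitative bound $|s_{k,k+l}-s_{k+1,k+l}|\le d\,\f{\log\uc_k}{\log\om_{k+1,k+l}}$, which makes the role of the hypotheses completely transparent and removes the case analysis; the paper's route, in exchange, avoids calculus and stays within the algebra of the pressure-type equation. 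One point you use tacitly but which is harmless: strict monotonicity of the products and the strict negativity of $\log\om_{k+1,k+l}$ require $\oc_k<1$, which indeed follows from $n_k\ge 2$ and $\sum_{j}c_{k,j}^{d}\le 1$, so all steps of your estimate are sound.
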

	\begin{proof}
We prove only (i), as the argument for (ii) follows in a similar manner.
		
	Recall that $s_k$ denotes $s_{k,k}$ for any $k$. It is straightforward that, for any $k,l$,
	\begin{equation}\label{s_k,l}
	\min\{s_{k},s_{k+1,k+l}\}\le	s_{k,k+l}\le \max\{s_{k},s_{k+1,k+l}\}.
	\end{equation}
	Hence, we distinguish the following cases.
	
	 Case A. If $s_{k}=s_{k+1,k+l}$, then \eqref{s_k,l} holds with equality.
	 
	 Case B. If $s_{k}>s_{k+1,k+l}$, then, for any $0<\eps<s_{k}-s_{k+1,k+l}$, we have
	 \begin{align*}
	 \prod_{i=k}^{k+l}\sum_{j=1}^{n_i}c_{i,j}^{s_{k+1,k+l}+\eps}&=\Big(\sum_{j=1}^{n_k}c_{k,j}^{s_{k}}c_{k,j}^{s_{k+1,k+l}-s_{k}+\eps}\Big)\Big(\prod_{i=k+1}^{k+l}\sum_{j=1}^{n_i}c_{i,j}^{s_{k+1,k+l}+\eps}\Big)\\
	 &\le\uc_k^{s_{k+1,k+l}-s_{k}+\eps}\,\om_{k+1,k+l}^\eps\\
	 &= \uc_k^{s_{k+1,k+l}-s_{k}+\eps+\f{\log \om_{k+1,k+l}}{\log \uc_k}\eps}\to 0\ (l\to\infty),
	 \end{align*}
	where the last limit is uniform in $k$. By \eqref{s_k,l}, it follows that, for sufficiently large $l$, $s_{k+1,k+l}\le s_{k,k+l}<s_{k+1,k+l}+\eps$.
	
	Case C. If $s_{k}<s_{k+1,k+l}$, then for any $0<\eps<s_{k+1,k+l}-s_{k}$, we have
	\begin{align*}
		\prod_{i=k}^{k+l}\sum_{j=1}^{n_i}c_{i,j}^{s_{k+1,k+l}-\eps}&=\Big(\sum_{j=1}^{n_k}c_{k,j}^{s_{k}}c_{k,j}^{s_{k+1,k+l}-s_{k}-\eps}\Big)\Big(\prod_{i=k+1}^{k+l}\sum_{j=1}^{n_i}c_{i,j}^{s_{k+1,k+l}-\eps}\Big)\\
		&\ge\uc_k^{s_{k+1,k+l}-s_{k}- \eps}\,\om_{k+1,k+l}^{-\eps}\\
		&=\uc_k^{s_{k+1,k+l}-s_{k}+\eps-\f{\log \om_{k+1,k+l}}{\log \uc_k}\eps}\to +\infty\ (l\to\infty),
	\end{align*}
	where the last limit is uniform in $k$. By \eqref{s_k,l}, it follows that, for sufficiently large $l$, $s_{k+1,k+l}-\eps< s_{k,k+l}\le s_{k+1,k+l}$.
	
	Combining  the above cases, we in fact obtain
	\[
	\lim_{l\to \infty}\sup_{k\ge 1}\,(s_{k+1,k+l}-s_{k,k+l})=0.
	\]
Consequently, for any $\eta>0$,
	\begin{align*}
		\Big|\limsup_{l\to \infty}\sup_{k\in \ok_{l,\eta}}s_{k+1,k+l}-\limsup_{l\to \infty}\sup_{k\in \ok_{l,\eta}}s_{k,k+l}\Big|&\le \limsup_{l\to \infty}\sup_{k\in \ok_{l,\eta}}|s_{k+1,k+l}-s_{k,k+l}|\\
		&\le \limsup_{l\to \infty}\sup_{k\ge 1}|s_{k+1,k+l}-s_{k,k+l}|=0.\qedhere
	\end{align*}
	\end{proof}

	\begin{proof}[Proof of Theorem \ref{main1}]
		Since $\mathcal{M}(J,{n_k},{\ccc_k})$ satisfies either \eqref{club}, it follows from Theorems \ref{dayu} and \ref{xiaoyu} that it suffices to prove $t\le t_*$ when the structure is quasi-normal.
		
		Let  $\varphi:(0,+\infty)\to (0,+\infty)$ be a strictly increasing function such that 
		\[
		\limsup_{l\to \infty}\sup_k \varphi\Big(\frac{\log \um_{k+1,k+l}}{\log \om_{k}}\Big)\Big/\frac{\log \om_{k+1,k+l}}{\log \um_{k}}<C
		\]
		for  some $C>0$. Then  there exists $L>0$ such that  for $l>L$ and all $k>0$, 
		\[
		\varphi\Big(\frac{\log \um_{k+1,k+l}}{\log \om_{k}}\Big)<C\frac{\log \om_{k+1,k+l}}{\log \om_{k}}.
		\]
		Hence  
		$\kk_{l,\eta}\subseteq \uk_{l,\frac{\varphi(\eta)}C}$ for all $\eta>0$ and  all $l>L$. By \eqref{def_t*tt*}, we have that 
		\begin{equation*}
			\begin{aligned}
				t \le \lim_{\eta\to 0}\limsup_{l\to\infty}\sup_{k\in \uk_{l,\frac{\varphi(\eta)}C}} s_{k+1,k+l} \le\lim_{\eta\to 0}\limsup_{l\to\infty}\sup_{k\in \uk_{l,\eta}} s_{k+1,k+l} =t_*.
			\end{aligned}\qedhere
		\end{equation*}
		
	\end{proof}

\begin{proof}[Proof of Corollary \ref{t_*=t^*}]
	It is immediate that \eqref{cdnllZ} implies \eqref{club}. 
	Therefore, by Theorem \ref{main1}, it suffices to prove $t^*\le t$.
	
 By \eqref{cdnllZ} and Lemma \ref{t=t^*},   we obtain
\begin{align*}
	\limsup_{l\to \infty}\sup_{k\in \ok_{l+1,\eta}}s_{k+1,k+l}&=\limsup_{l\to \infty}\sup_{k\in \ok_{l,\eta}}s_{k+1,k+l}\\
	&=\limsup_{l\to \infty}\sup_{k\in \ok_{l,\eta}}s_{k,k+l}\\
	&=\limsup_{l\to \infty}\sup_{k+1\in \ok_{l,\eta}}s_{k+1,k+l+1}.
\end{align*}
Given $\eta>0$, fix $l>0$. For $k+1\in \ok_{l,\eta}$, since
\[
\f{\log \um_{k+1,k+l+1}}{\log \om_k}>	\f{\log \um_{k+2,k+l+1}}{\log \om_k}>\eta,
\]
we have  $k\in \kk_{l+1,\eta}$. It follows that
\[
\limsup_{l\to \infty}\sup_{k\in \ok_{l+1,\eta}}s_{k+1,k+l}	\le \limsup_{l\to \infty}\sup_{k\in \kk_{l+1,\eta}}s_{k+1,k+l+1}
=\limsup_{l\to \infty}\sup_{k\in \kk_{l,\eta}}s_{k+1,k+l}.
\]
Letting $\eta\to 0$ then completes the proof.
\end{proof}
	
	\begin{proof}[Proof of Theorem \ref{main2}]
		The proof is similar to that of Theorem \ref{main1}, and we omit it.
	\end{proof}

	\begin{proof}[Proof of Theorem \ref{no.1}]
		This conclusion is an immediate consequence of either Corollary \ref{t_*=t^*} or Theorem \ref{main2}.
	\end{proof}

	\section{Assouad dimensions and Examples}\label{Sec_ADE}
	
	In this section, we study the Assouad dimensions of Moran sets.
The following corollary can be obtained directly from Proposition \ref{BC->BNC} and \cite[Theorem A]{kaenmaki2024regularity}.

	\begin{corollary}\label{cor_AD}
If $\mathcal{M}(J,\{n_k\},\{\ccc_k\})$  satisfies BBC, then every $E\in\mathcal{M}(J,\{n_k\},\{\ccc_k\})$ satisfies
		\[
		\dima E =\lim_{l\to\infty}\sup_{k\ge 1} s_{k+1,k+l}=\lim_{l\to\infty}\limsup_{k\to\infty}s_{k+1,k+l}=\inf_{l\ge 1}\limsup_{k\to\infty}s_{k+1,k+l}.
		\]
	\end{corollary}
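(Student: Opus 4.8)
The plan is to reduce the statement entirely to results already available. \textbf{Step 1.} Since $\mathcal{M}(J,\{n_k\},\{\ccc_k\})$ satisfies the BBC, Proposition \ref{BC->BNC} shows that every $E\in\mathcal{M}(J,\{n_k\},\{\ccc_k\})$ satisfies the BNC. With the BNC verified, the Assouad dimension formula of K\"aenm\"aki and Rutar \cite[Theorem A]{kaenmaki2024regularity} applies and records all four quantities at once, so the chain of equalities follows \emph{directly}; the only thing one actually has to check is the hypothesis, and that is exactly what Step 1 provides. This is the proof I would write.

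For completeness I would also indicate \emph{why} the four expressions coincide, since this is where the mathematical content sits. Write $P_{k,l}(s)=\prod_{i=k+1}^{k+l}\sum_{j=1}^{n_i}c_{i,j}^{\,s}$, so that $s\mapsto\log P_{k,l}(s)$ is strictly decreasing with unique zero $s_{k+1,k+l}$. The engine is the concatenation identity $\log P_{k,l+m}(s)=\log P_{k,l}(s)+\log P_{k+l,m}(s)$, from which both $\gamma_l(s):=\sup_{k\ge0}\log P_{k,l}(s)$ and $\beta_l(s):=\limsup_{k\to\infty}\log P_{k,l}(s)$ are subadditive in $l$. \textbf{Step 2.} Fekete's lemma gives $\Lambda(s):=\lim_{l\to\infty}\gamma_l(s)/l=\inf_l\gamma_l(s)/l$; reading off the sign of $\gamma_l(s)$ (negative forces $\sup_k s_{k+1,k+l}<s$ for all large $l$, positive forces the opposite inequality) shows that $\limsup_{l\to\infty}\sup_{k\ge1}s_{k+1,k+l}$ is in fact a genuine limit, which upgrades the K\"aenm\"aki--Rutar formula to $\dima E=\lim_{l\to\infty}\sup_{k\ge1}s_{k+1,k+l}$ and gives the first equality. \textbf{Step 3.} Applying the same sign analysis to $\beta_l$, and using subadditivity to propagate a negative value to all larger block lengths, one obtains $\lim_{l\to\infty}h(l)=\inf_{l}h(l)$ with $h(l)=\limsup_{k\to\infty}s_{k+1,k+l}$, which is the third equality. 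The middle equality $\lim_l\sup_k s_{k+1,k+l}=\lim_l h(l)$ is a boundary statement: $\sup_k\ge\limsup_k$ is free, and for the reverse one fixes $s>\inf_l h(l)$, chooses $l_0$ with $h(l_0)<s$, and tiles a long block $[k+1,k+l]$ by translates of length $l_0$ lying in the large-$k$ regime, each contributing negatively to $\log P$, while a bounded prefix of length $<l_0$ contributes only a bounded error that is swamped once the bulk tends to $-\infty$.

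\textbf{Main obstacle.} The delicate point, and my reason for routing through \cite[Theorem A]{kaenmaki2024regularity} rather than reproving everything, is the loss of uniformity once $c_*>0$ is dropped: under the BBC the maximal ratios $\oc_i$ may still tend to $1$, so $s\mapsto\log P_{k,l_0}(s)$ can be arbitrarily flat near its zero. Consequently a fixed-length block need not deliver a \emph{uniform} gap $\sup_{k\ge K}\log P_{k,l_0}(s)\le-\delta<0$, and the limit superior of the zeros $h(l_0)$ can fail to match the zero of the limit superior $\beta_{l_0}$ --- precisely the equicontinuity that $c_*>0$ would have guaranteed. Taming this local non-uniformity is exactly the work encoded in the BNC and already carried out in the proof of the K\"aenm\"aki--Rutar theorem, which is why the efficient argument is the direct reduction of Step 1.
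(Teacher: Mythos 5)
Your Step 1 is exactly the paper's proof: the paper obtains this corollary directly from Proposition \ref{BC->BNC} (BBC implies BNC) combined with \cite[Theorem A]{kaenmaki2024regularity}, precisely the reduction you give. Your supplementary Steps 2--3 sketch material the paper does not include, but since you correctly identify the non-uniformity issue and defer it to the cited theorem rather than relying on the sketch, your actual argument coincides with the paper's.
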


Next, we give the proofs of Proposition \ref{general} and  Theorem \ref{WhenBNCisneeded}. 
	\begin{proof}[Proof of Proposition \ref{general}]
		First, we observe that for every $K\in \N^+$,
		\begin{align*}
			\lim\limits_{m\to \infty}\sup_{k\ge m,l\ge m}s_{k,k+l}
			&\le\lim\limits_{m\to \infty}\sup_{l\ge m}\sup_{k\ge K}s_{k,k+l}    
			=\limsup_{l\to\infty}\sup_{k\ge K} s_{k,k+l}.
		\end{align*}
		
		Next, for each $t>\lim\limits_{m\to \infty}\sup\limits_{k\ge m,l\ge m}s_{k,k+l}$,  there exists $m_0>K$ such that $	s_{k,k+l}<t$
		for all $ k\ge m_0,l\ge m_0, $ then
		\begin{equation} \label{ineqsklt}
			\limsup_{l\to\infty}\sup_{k\ge m_0} s_{k,k+l} \leq t.
		\end{equation}
	We proceed by a similar argument as in the proof of Lemma \ref{t=t^*}. For all  $k\in \N^+$ and $\eps>0$, it follows that
		\begin{align*}
			\prod_{i=k}^{k+l}\sum_{j=1}^{n_i}c_{i,j}^{s_{k,k+l}+\eps}
			&\le \om_{k,k+l}^\eps\to 0\ (l\to\infty)\\ 
			\prod_{i=k}^{k+l}\sum_{j=1}^{n_i}c_{i,j}^{s_{k,k+l}-\eps} 
			&\ge \om_{k,k+l}^{-\eps}\to +\infty\ (l\to\infty).
		\end{align*}
		Since 
		\begin{align*}
			\prod_{i=m_0}^{k'+l}\sum_{j=1}^{n_i}c_{i,j}^{s_{k',k'+l}}
			&=\frac1{\prod_{i=k'}^{m_0}\sum_{j=1}^{n_i}c_{i,j}^{s_{k',k'+l}}}\in \Big(\frac1{\prod_{i=k'}^{m_0}n_i},\,\frac1{\um_{k',m_0}^{d}\prod_{i=k'}^{m_0}n_i}\Big)
		\end{align*}
		for any $K\le k'<m_0$, we have 
		\[
		\lim_{l\to\infty} |s_{k',k'+l}-s_{m_0,k'+l}|=0,\qquad K\le k'<m_0. 
		\]	
		Similarly, for $K\le k'<m_0$, we also have $	\lim_{l\to \infty}|s_{m_0,m_0+l}-s_{m_0,k'+l}|=0$. These imply that 
		\begin{align*}
			&\limsup_{l\to\infty}\sup_{k\ge K} s_{k,k+l} \le \limsup_{l\to\infty} \big(\sup_{k\ge K}s_{k,k+l}-\sup_{k\ge m_0}s_{k,k+l}\big)+\limsup_{l\to\infty}\sup_{k\ge m_0}s_{k,k+l}\\
			&\le\limsup_{l\to\infty}\big(\max_{K\le k'<m_0}\left|s_{k',k'+l}-s_{m_0,k'+l}\right|+\max_{K\le k'<m_0}\left|s_{m_0,k'+l}-s_{m_0,m_0+l}\right|\big)  \\
			&\hspace{7cm}+\limsup_{l\to\infty}\sup_{k\ge m_0}s_{k,k+l}\\
			&=\limsup_{l\to\infty}\sup_{k\ge m_0}s_{k,k+l}
		\end{align*}
		By \eqref{ineqsklt}, we have $\limsup_{l\to\infty}\sup_{k\ge K} s_{k,k+l}\leq t$, and the conclusion follows by the arbitrariness of the choice of $t$.
	\end{proof}

	\begin{proof}[Proof of Theorem \ref{WhenBNCisneeded}]
		Since BBC holds, it follows from \eqref{dimHBAQ}  and Corollary \ref{cor_AD} that $\limsup_{l\to\infty}\sup_{k\ge 1} s_{k+1,k+l}=	\dima E\geq \dimqa E$.
		Then it suffices to prove
		\[
		\dimqa E\ge\limsup_{l\to\infty}\sup_{k\ge 1} s_{k+1,k+l}.
		\]
		
		By Theorem \ref{dayu}, we have $\dimqa E\ge t_*$.  
		By Proposition \ref{BC} and the fact that $\liminf_{k\to\infty}\f{\log \uc_k}{\log \om_k}>0$, there exist $K$ and $L$ such that for any $0<\eta<\frac12\liminf_{k\to\infty}\f{\log \uc_k}{\log \om_k}$,
		\[
		\f{\log \om_{k+1,k+l}}{\log \om_{k}}> \f{\log \uc_{k+1}}{\log \om_{k+1}}>\eta,\quad k>K,\ l>L.
		\]
		Therefore, by Corollary \ref{contr} and Proposition \ref{general}, we have
		\begin{equation*}
			\begin{aligned}
				\dimqa E\ge t_*\ge\limsup_{l\to\infty}\sup_{k>K}s_{k+1,k+l}=\limsup_{l\to\infty}\sup_{k\ge 1} s_{k+1,k+l}.
			\end{aligned}\qedhere
		\end{equation*}

	\end{proof}

	Finally, we discuss Example  \ref{1-2-3+}, \ref{BCandBNC},   \ref{exa'} and   \ref{exa} in details.
	\begin{proof}[Proof of Example \ref{1-2-3+}]
			(1) For $\mathcal{M} (J,\{2,2,\dots\}, \{ (2^{-2^k},2^{-2^k} ) \}_{k\ge 1} )$, it is clear that  it does not satisfy \eqref{club}. By Proposition \ref{BC}, the homogeneity of the structure together with $\sup_{k\ge 1} n_k=2<\infty$ ensures that BBC holds.
			
			(2) For $ \mathcal{M}\big(J,\{k+1\}_{k\ge 1},\big\{\big(\f1{2k},\ldots,\f1{2k}\big)\big\}_{k\ge 1}\big)$, it is clear that  it  satisfies \eqref{club}. Since $\sup_{k\ge 1} n_k=+\infty$, it follows from Proposition \ref{BC} that   BBC does not hold.
	\end{proof}

	\begin{proof}[Proof of Example \ref{BCandBNC}]
	Observe that
		\[
		\prod_{k=\f{n(n+1)}2}^{\f{(n+1)(n+2)}2-1}c_{k,2}=\f{1}{2^n+1}=c_{\f{n(n+1)}2,1}\,,\quad n\in \N^+.
		\]
		Therefore,
$\sup_{n\ge 1}\min  \{l:\om_{\f{n(n+1)}2,\f{n(n+1)}2+l}\le\uc_{\f{n(n+1)}2}\}=\sup_{n\ge 1} n=\infty.$ By Proposition \ref{BC}, BBC does not hold.
		
		To show that the structure satisfies the BNC, it suffices to verify that
		\begin{equation}\label{1/4BNC}
			\limsup_{\de\to 0}\sup_{x\in [0,1]}\# \Big\{\bu\in D(\de):J_{\bu}\cap B\big(x,\f\de4\big)\neq \emptyset \Big\}<\infty.
		\end{equation}
		Given $\de\in (0,1)$, we partition $D(\de)$ into three subsets:
		$$D(\de)=D_1(\de)\cup D_2(\de)\cup D_3(\de),$$ where
		\begin{gather*}
			D_1(\de)=\big\{\bu\in D^*:c_\bu\le \de<c_{\bu^-}=2c_\bu\big\},\\
			D_2(\de)=\big\{\bu\in D^*:c_\bu\le \de<c_{\bu^-}=(1+2^{-m})c_\bu\text{\ for some\ }m\in \N^+\big\},\\
			D_3(\de)=\big\{\bu\in D^*:c_\bu\le \de<c_{\bu^-}=(1+2^{n})c_\bu\text{\ for some\ }n\in \N^+\big\}.
		\end{gather*}
		Clearly,
		\begin{gather*}
			\sup_{x\in [0,1]}\#\Big\{\bu\in D_1(\de):J_\bu\cap B\big(x,\f\de4\big)\neq \emptyset\Big\}\le 2,\\
			\sup_{x\in [0,1]}\#\Big\{\bu\in D_2(\de):J_\bu\cap B\big(x,\f\de4\big)\neq \emptyset\Big\}\le 2.
		\end{gather*}
		
		By construction, for any fixed $\bu_0\in D_3(\de)$ with $c_{\bu_0^-} = (1 + 2^{n})c_{\bu_0}$ for some $n\ge 2$, the only word $\bu\in D_3(\de)$ satisfying $J_{\bu}\subseteq J_{\bu_0^-}$ is $\bu=\bu_0$. Hence, for any $J_\bu,\bu\in D_3(\de)$ with $J_\bu\neq J_{\bu_0}$, we obtain that
		\[
		d(J_{\bu},J_{\bu_0})\ge \min\{c_{\bu^-}-c_{\bu},c_{\bu_0^-}-c_{\bu_0}\}>\min\Big\{\f12c_{\bu^-},\f12c_{\bu_0^-}\Big\}>\f\de2.
		\]
		It follows that
		\[
		\sup_{x\in [0,1]}\#\big\{\bu\in D_3(\de):J_\bu\cap B\big(x,\f\de4\big)\neq \emptyset\big\}\le 1
		\] 
		Therefore, \eqref{1/4BNC} holds with constant 5.
	\end{proof}

	\begin{proof}[Proof of Example \ref{exa'}]
		Since $c_*>0$, (i) (a) in Proposition \ref{quasihomo}  implies that the structure is normal, and by \eqref{Eq_HBAdim}, one verifies that  
		$$	
		\dimh E=-\f{\log p}{\log 2},\quad\odimb E=-\f{\log q}{\log 2} ,  \quad \dima E=1,
		$$
		where $p,q$ satisfy	$p+p^3=(q+q^2)(q+q^3)=1.$

		It is easy to verify that $\uk_{l,\eta} =\{k:\frac {l}{k}>\eta \},$ and 	it follows from Theorem \ref{no.1} that
		\[
		\dimqa E=t_*=\lim_{\eta\to 0}\limsup_{l\to\infty}\sup_{k\in \uk_{l,\eta}} s_{k+1,k+l}=\lim_{\eta\to 0}\limsup_{l\to\infty}\sup_{k< \f l\eta} s_{k+1,k+l}.
		\]
		For each $n\geq 1$, define  $s(n)= s_{a_n+n,\,2a_n+n-1}$. Then 
		\[
		\big( {2^{-s(n)}}+ {4^{-s(n)}}\big)^{a_n-n}\big( {2^{-s(n)}}+ {2^{-s(n)}}\big)^n=1.
		\]
		Since $\lim_{n\to\infty}\f {n}{a_n}=0$,  $\lim_{n\to\infty } s(n)= -\f{\log r}{\log 2}  $, where $r+r^2=1.$
		Fix $\eta\in (0,1)$. For sufficently large $n$, we have $  s(n)=		\sup_{k<\f{a_n}\eta}s_{k+1,k+a_n} . $

		Given $k,l$ with $k<\f l{\eta}$, if there exist some intervals $[2a_i,2a_i+i)$ such that $[2a_i,2a_i+i)\cap[k+1,k+l]\neq\emptyset$, then $k+1-i<2a_i\le k+l$, which gives
		\[
		\lim_{l\to\infty}\sup_{[2a_i,2a_i+i)\cap[k+1,k+l]\neq\emptyset}\f{i}{l}=0.
		\]
		Hence, $\limsup_{l\to\infty}\sup_{k< \f l\eta} s_{k+1,k+l}\le\lim_{n\to\infty } s(n)= -\f{\log r}{\log 2}$. 
		Otherwise for every $i\in \N^+$, $[2a_i,2a_i+i)\cap[k+1,k+l]=\emptyset$, which implies  $\limsup_{l\to\infty}\sup_{k< \f l\eta} s_{k+1,k+l}\le -\f{\log r}{\log 2}$ as well. We combine these together and then obtain $\dimqa E=-\f{\log r}{\log 2}. $
	\end{proof}
	
	Finally, we establish Example \ref{exa} with the aid of the following well-known inequalities.
	\begin{lemma}\label{ine}
		\begin{enumerate}[(i)]
			\item $(1-x)^a>1-ax-ax^2,\quad a>0,\ x\in \left(0,\f12\right]$;
			\item $x^{-\f{1 }{x^\alpha-1}}\ge 2^{-\f{1 }{2^\alpha-1}},\quad \alpha>1,\ x\ge 2;$ 
			\item $\f\alpha\beta+\f{\log\beta-\log\alpha}{\beta\log 3}<1,\quad \beta>\alpha>1$.
		\end{enumerate}
	\end{lemma}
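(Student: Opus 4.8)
All three inequalities are elementary and each reduces to a single-variable monotonicity argument combined with a standard estimate ($e^u \ge 1+u$ or $\log(1+u) < u$); I would dispatch them one at a time, and none requires the kind of case analysis one might first expect.

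For (i), the plan is to avoid splitting into $a \ge 1$ and $0 < a < 1$ by working through the exponential. Writing $(1-x)^a = e^{a\log(1-x)} \ge 1 + a\log(1-x)$ via $e^u \ge 1+u$, it suffices to prove $a\log(1-x) > -ax - ax^2$, i.e.\ (dividing by $a > 0$) that $\phi(x) := \log(1-x) + x + x^2 > 0$ on $(0,\tfrac12]$. Since $\phi(0) = 0$ and $\phi'(x) = \frac{x(1-2x)}{1-x} \ge 0$ for $x \in (0,\tfrac12]$ (vanishing only at $x = \tfrac12$), $\phi$ is strictly increasing and hence positive there. This settles all $a > 0$ simultaneously.

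For (iii), I would clear denominators: the claim is equivalent to $\log\beta - \log\alpha < (\beta-\alpha)\log 3$, that is $\log(\beta/\alpha) < (\beta-\alpha)\log 3$. Using $\log t < t-1$ at $t = \beta/\alpha > 1$ gives $\log(\beta/\alpha) < \beta/\alpha - 1 = (\beta-\alpha)/\alpha$; since $\alpha > 1$ forces $1/\alpha < 1 < \log 3$, the right-hand side is strictly smaller than $(\beta-\alpha)\log 3$, closing the chain.

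The main work is (ii), and this is where I expect the only real obstacle, namely organizing the calculus so that the dependence on $\alpha$ disappears. Taking logarithms of both positive sides turns the claim into $\frac{\log x}{x^\alpha - 1} \le \frac{\log 2}{2^\alpha - 1}$ for $x \ge 2$, so it suffices to show $g(x) := \frac{\log x}{x^\alpha - 1}$ is nonincreasing on $(1,\infty)$. A direct computation yields $g'(x) = \frac{G(x) - 1}{x(x^\alpha - 1)^2}$ with $G(x) := x^\alpha(1 - \alpha\log x)$, so the sign of $g'$ is controlled entirely by whether $G(x) < 1$. I would then observe that $G(1) = 1$ and $G'(x) = -\alpha^2 x^{\alpha-1}\log x < 0$ for $x > 1$, whence $G(x) < 1$ on $(1,\infty)$ and therefore $g' < 0$ there. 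Consequently $g(x) \le g(2)$ for $x \ge 2$, which is exactly the logarithmic form of the desired inequality. The reduction through $G$ is the key point: it converts the two-parameter estimate into the single fact $G(1) = 1$, sidestepping any case distinction in $\alpha$, and the remaining steps are routine derivative checks.
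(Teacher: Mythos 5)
All three of your arguments are correct; I checked the computations: $\phi'(x)=\frac{x(1-2x)}{1-x}\ge 0$ on $\left(0,\frac12\right]$, the identity $g'(x)=\frac{G(x)-1}{x(x^{\alpha}-1)^{2}}$ with $G'(x)=-\alpha^{2}x^{\alpha-1}\log x<0$, and the chain $\log(\beta/\alpha)<(\beta-\alpha)/\alpha<(\beta-\alpha)\log 3$. Your route is genuinely different from the authors' own proof (which, incidentally, is drafted in the source but commented out, so the compiled paper states the lemma as ``well-known'' without proof). For (i) the paper splits into the cases $a\ge 1$ (Bernoulli's inequality) and $0<a<1$ (monotonicity of $(1-x)^{a}+ax+ax^{2}-1$, whose derivative estimate depends on $a$); your bound $e^{u}\ge 1+u$ removes the case distinction entirely by reducing to the single $a$-free function $\phi(x)=\log(1-x)+x+x^{2}$. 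For (ii) the paper differentiates $x^{-1/(x^{\alpha}-1)}$ directly and then bounds the numerator $x^{\alpha-1}(\alpha\log x-1)+\frac1x$ below by explicit constants (via $2^{\alpha-1}(\alpha\log 2-1)+\frac12\ge\log 2-\frac12>0$), a step that is somewhat delicate since it compares the sum of an increasing term and a decreasing term with its value at $x=2$; your logarithmic substitution converts the sign of $g'$ into the clean statement $G(x)<G(1)=1$ for $x>1$, with no numerical estimates at all. For (iii) the two proofs are the same fact in different clothing: the paper's $h(x)=\frac{1+\log x}{x}<h(1)=1$ for $x>1$ is precisely your $\log t<t-1$ applied at $t=\beta/\alpha$. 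In short, your version trades the paper's case analysis and constant-chasing for two monotonicity facts that hold uniformly in $a$ and $\alpha$, and is if anything the tighter write-up.
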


	\begin{proof}[Proof of Example \ref{exa}]
		Since  $
		\big(1-(k+1)^{-\alpha} \big)^{\f\alpha\beta}+ (k+1)^{-\alpha}>1 $
		and
		\begin{align*}
			\big(1- (k+1)^{-\alpha}\big)^{\f\alpha\beta+\f{\log\beta-\log\alpha}{\beta\log(k+1)}}+ (k+1)^{-\alpha -\f{\log\beta-\log\alpha}{\log(k+1)}} 
			<&\big(1- (k+1)^{-\alpha}\big)^{\f\alpha\beta}+\frac{\alpha}{\beta}(k+1)^{-\alpha}< 1,
		\end{align*}
		it follows that for every $k$, $	\f\alpha\beta<s_{k}<\min\ \{\f\alpha\beta+\f{\log\beta-\log\alpha}{\beta\log(k+1)},1 \} $.
		Then,  for all  $k,l$, 
		\begin{equation}\label{est}
			\f\alpha\beta<\min_{k\le i\le l}s_i\le s_{k,k+l}\le \max_{k\le i\le l}s_i<\min\Big\{\f\alpha\beta+\f{\log\beta-\log\alpha}{\beta\log(k+1)},1\Big\},
		\end{equation}
		and it implies  
		\begin{equation}\label{dyl}
			\lim_{m\to\infty}\sup_{k\ge m,l\ge m}s_{k,k+l}=\f\alpha\beta.
		\end{equation}
		
		It is clear that
		$$
		\lim_{l\to \infty}\sup_{k\ge 1}\om_{k,k+l}=\lim_{l\to \infty}\sup_{k\ge 1}\prod_{i=k}^{k+l}\Big(1-(i+1)^{-k}\Big) =1,
		$$
		and
		\begin{equation}\label{con}
			\lim_{k\to\infty}\f{\log\uc_k}{\log \om_k}=-\lim_{k\to\infty}\f{\beta\log(k+1)}{\sum_{i=1}^k\log\left(1-(i+1)^{-\alpha}\right)}=\left\{\begin{array}{ll}
				+\infty,\quad & \alpha>1  \\
				\beta/\alpha, &\alpha=1  \\
				0, & \alpha<1
			\end{array}
			\right..
		\end{equation}
		By (ii) (a) in Proposition \ref{quasihomo},  the structure is not quasi-normal if  $\alpha\ge 1$. 
		
		Next, we   show that the structure is   not quasi-normal for $0<\alpha<1$.
		Fix $\de>0$. Observe that $f(x)=(x+1)^{\alpha-1}\log(x+1)$ is strictly decreasing on $[{\text{e}}^{\f 1{1-\alpha}}-1,+\infty)$ with $\lim_{x\to +\infty}f(x)=0$. For each sufficiently large $l$, there exists a unique integer $k(l)>{\text{e}}^{\f 1{1-\alpha}}-1$ such that
		\begin{equation}\label{non4}
			f(k(l)) \ge \f\de l>f(k(l)+1).
		\end{equation}
		Since $f\big(l^{\f 1{1-\alpha}}-1\big)=\f1{1-\alpha}\f{\log l}l >\f\de l $ for $ l>{\text{e}}^\de$,
		we have that
		\begin{equation}\label{non1}
			k(l)>	l^{\f1{1-\alpha}}-2.
		\end{equation}

		Let $\varphi:(0,+\infty)\to (0,+\infty)$ be strictly increasing.   
		Since $0<\alpha<1$, for sufficiently large $k$, there exist   constants $C_\alpha,C'_\alpha>0$ depending only on $\alpha$, such that
		\begin{align*} 
			C_\alpha^{-1}(k+2)^{1-\alpha}<-\sum_{i=1}^k \log (1-(i+1)^{-\alpha})<C_\alpha(k+1)^{1-\alpha}
		\end{align*}
		and $	-\f1{\log(1-k^{-\alpha})}>C'_\alpha k^\alpha. $  Then it follows that  
		\begin{align*}
			&\varphi\Big(\frac{\log \um_{k(l)+1,k(l)+l}}{\log \om_{k(l)}}\Big)\bigg/\frac{\log \om_{k(l)+1,k(l)+l}}{\log \om_{k(l)}}\\
			=\ &\frac{\sum_{i=1}^{k(l)}\log (1-(i+1)^{-\alpha})}{\sum_{i=k(l)+1}^{k(l)+l} \log(1-(i+1)^{-\alpha})}\varphi\Big(-\frac{\beta\sum_{i=k(l)+1}^{k(l)+l}\log (i+1)}{\sum_{i=1}^{k(l)} \log(1-(i+1)^{-\alpha})}\Big)\\
			>\ &    -\f{ (k(l)+2)^{1-\alpha}}{C_{\alpha}l\log(1-(k(l)+2)^{-\alpha})}\varphi\left(\frac{\beta l\log (k(l)+1)}{C_{\alpha} (k(l)+1)^{1-\alpha}}\right)          .
		\end{align*}
		Combining these  with   \eqref{non4} and \eqref{non1}, for sufficiently large $l$,  we have that
		\begin{align*}
			\varphi\Big(\frac{\log \um_{k(l)+1,k(l)+l}}{\log \om_{k(l)}}\Big)\bigg/\frac{\log \om_{k(l)+1,k(l)+l}}{\log \om_{k(l)}}>\ &\f{C'_\alpha(k(l)+2)}{C_\alpha l}\varphi\Big(\frac{\beta\de}{C_\alpha}\Big)\\
			>\ & C'_\alpha C_\alpha^{-1}l^{\f{\alpha}{1-\alpha}}\varphi\Big(\frac{\beta\de}{C_\alpha}\Big) 
		\end{align*}
		Since $0<\alpha<1$ and  $\lim_{l\to\infty}l^{\f{\alpha}{1-\alpha}}=\infty$, the structure is not quasi-normal.

		(i)	For $\alpha>1$, it is  clear  that $\lim_{k\to\infty}\om_k>0$. Hence, $\dimqa E=\dima E$=1. By Lemma \ref{ine} (iii) and \eqref{est}, we have
		\[
		\limsup_{l\to\infty}\sup_{k\ge 1} s_{k,k+l}\le\sup_{k,l}s_{k,k+l}\le \max\Big\{\f\alpha\beta+\f{\log\beta-\log\alpha}{\beta\log 3},s_{1}\Big\}<1.
		\]
		
		For each integer $k\geq 1$, we write
		\[
		h_k(t)=\prod_{i=1}^k\Big[\big(1- (i+1)^{-\alpha}\big)^t+(i+1)^{-\beta t}\Big],\quad t\in [\f\alpha\beta,1 ].
		\] 
		Since  $\alpha>1$, we have that  
		\begin{align*}
			h_k\big(\f\alpha\beta\big)
			&=\prod_{i=1}^k\Big[1+\big(1-\f\alpha\beta\big)(i+1)^{-\alpha}+O\big({(i+1)^{-2\alpha}}\big)\Big]
		\end{align*}
		converges.  Moreover $h_k(t)$ converges uniformly to the  continuous function 
		\[
		h_\infty(t)=\prod_{i=1}^\infty\Big[\big(1-(i+1)^{-\alpha}\big)^t+ (i+1)^{-\beta t}\Big],\quad t\in  [\f\alpha\beta,1].
		\]
		Since $h_\infty(t)$ is strictly decreasing with $h_\infty\big(\f\alpha\beta\big)>1>h_\infty(1),$
		 there exists a unique number $t_0\in (\f\alpha\beta,1 )$ such that $h_\infty(t_0)=1$. It follows from $h_k(s_{1,k})\equiv 1$ and the uniform convergence that
		\[
		\frac\alpha\beta<\lim_{k\to\infty} s_{1,k}=t_0<1.
		\]
		
		Furthermore, assume  $\beta\ge\f{2^\alpha+1}{2^\alpha-1}2^{\f{\alpha}{2^\alpha-1}}\alpha$.   We first show that $\{s_{k}\}_{k=1}^\infty$ is strictly decreasing.  
		Let
		\begin{equation}\label{def_Fxs}
			F(x,s)=\big(1- x^{-\alpha}\big)^{s}+x^{-\beta s}-1, \quad x\ge2,\ s>0.
		\end{equation}
		Since $F_s'<0$ for all  $x\ge2,s>0$, it follows from the implicit function theorem that there exists  a unique   $s(x)$ such that $F(x,s(x))=0$ for $x\ge 2$. By Lemma \ref{ine} (i) and (ii), we have
		\begin{align*}
			F\Big(x,\f{\alpha x^\alpha}{\beta(x^\alpha-1)}\Big)&>1-\f{\alpha}{\beta(x^\alpha-1)}-\f{\alpha}{\beta x^\alpha(x^\alpha-1)}+x^{-\alpha x^\alpha(x^\alpha-1)^{-1}}-1\\
			&\ge x^{-\alpha}\Big[2^{-\f{\alpha }{2^\alpha-1}}-\f{ \alpha(2^{\alpha}+1)}{\beta(2^\alpha-1)}\Big] \ge 0, 
		\end{align*}
		and it implies that $s(x)>\f{\alpha x^\alpha}{\beta(x^\alpha-1)}$. Furthermore,
		\begin{eqnarray}
			s'(x)&=&-\f {F'_x}{F'_s}\notag=\f{\alpha s(x)x^{-(\alpha +1)}\left(1-x^{-\alpha}\right)^{s(x)-1}-\beta s(x)x^{-(\beta s(x)+1)}}{\beta x^{-\beta s(x)}\log x-\left(1-x^{-\alpha}\right)^{s(x)}\log(1-x^{-\alpha})}\notag\\
			&=&\f{\beta s(x)x^{-(\alpha +1)}\left(1-x^{-\alpha}\right)^{s(x)-1}}{\beta x^{-\beta s(x)}\log x-\left(1-x^{-\alpha}\right)^{s(x)}\log(1-x^{-\alpha})}\Big[\f {\alpha}\beta-x^{\alpha-\beta s(x)}\left(1-x^{-\alpha}\right)^{1-s(x)}\Big].\label{implict}
		\end{eqnarray}
		Since  $F(x,s(x))=0$, that is, $x^{ -\beta s(x)} =  1- (1- x^{-\alpha} )^{s(x)}  $, by $s(x)>\f{\alpha x^\alpha}{\beta(x^\alpha-1)}$, we obtain
		\begin{equation*} 
			\begin{aligned}
				x^{\alpha-\beta s(x)}\big(1-x^{-\alpha}\big)^{1-s(x)}
				&=(x^\alpha-1)\big[\big(1-x^{-\alpha}\big)^{-s(x)}-1\big] >x^{-\alpha}(x^\alpha-1)s(x) >\f{\alpha}{\beta}.
			\end{aligned}
		\end{equation*}
		Combining it with \eqref{implict} , we have that   $s(x)$ is strictly decreasing for $x\ge 2$. Therefore, $\{s_{k}\}$ is strictly decreasing,  and it follows that  for each $l$,
		\[
		s_{k,k+l}\le \left\{\begin{array}{ll}
			s_{k,l+1}\le s_{1,l+1},\quad &1\le k\le l\vspace{0.5em}\\
			s_{l+1}<s_{1,l+1}, & k\ge l+1
		\end{array}\right..
		\]
		Hence, we obtain that $\limsup_{l\to\infty}\sup_{k\ge 1} s_{k,k+l}=\lim_{l\to \infty} s_{1,l}.$
		
		(ii) For $\alpha=1$, it is clear that $\lim_{k\to\infty}\om_k=0$. By Proposition \ref{general}, \eqref{est} and \eqref{dyl},
		\begin{align*}
			\f\alpha\beta\le\liminf_{l\to\infty}s_{1,l}\le\limsup_{l\to\infty}s_{1,l}\le \limsup_{l\to\infty}\sup_{k\ge 1}s_{k,k+l}=\lim_{m\to\infty}\sup_{k\ge m,l\ge m}s_{k,k+l}=\f\alpha\beta.
		\end{align*}
		Thus $\lim_{l\to\infty}s_{1,l}=\f\alpha\beta$.
		Observe that $S=\left\{\f 1n\right\}_{n\ge2}\subset E$ and by \cite[Theorem 3.4.7]{fraser2020assouad}, we have  $\dimqa S=1$. Therefore,
		\[
		\f\alpha\beta=\lim_{l\to\infty} s_{1,l}=\lim_{l\to\infty}\sup_{k\ge 1} s_{k,k+l}<\dimqa E=\dima E=1.
		\]
		
		(iii) For $\alpha<1$,  by \eqref{con}, Theorem \ref{dayu} and Theorem \ref{xiaoyu}, it follows that 
		$$
		\lim_{\eta\to 0}\limsup_{l\to\infty}\sup_{k\in \uk_{l,\eta}} s_{k+1,k+l}\\
		\le \dimqa E  \le\lim_{\eta\to 0}\limsup_{l\to\infty}\sup_{k\in \kk_{l,\eta}} s_{k,k+l}
		$$
		Similar to (ii), we have 
		\[
		\lim_{l\to\infty}s_{1,l}=\limsup_{l\to\infty}\sup_{k\ge 1} s_{k,k+l}=\lim_{m\to\infty}\sup_{k\ge m,l\ge m}s_{k,k+l}=\f\alpha\beta,
		\]
		and it implies that 
		$$
		\f\alpha\beta=\lim_{l\to\infty}s_{1,l}  \le \dimqa E \le \limsup_{l\to\infty}\sup_{k\ge 1} s_{k,k+l}	=\f\alpha\beta.
		$$
		
		Since $P=\left\{\prod_{i=1}^n\left[1-(i+1)^{-\alpha}\right]\right\}_{n\ge 2}\subset E$, by \cite[Example 1.12]{lu2016quasi}, $\dima P=1$.  Hence, we  have
		\[
		\f\alpha\beta=\lim_{l\to\infty} s_{1,l}=\lim_{l\to\infty}\sup_{k\ge 1} s_{k,k+l}=\dimqa E<\dima E=1. \qedhere
		\]
	\end{proof}
	\begin{remark}
		Example \ref{exa} (i) shows that even in the trivial case $\lim_{k\to\infty} \om_k>0$, $\limsup_{l\to\infty}\sup_{k\ge 1} s_{k,k+l}$ can still take values in $(0,1)$.
	\end{remark}

\end{document}